\newtheorem{theorem}{Theorem}[section]
\newtheorem{proposition}[theorem]{Proposition}
\newtheorem{corollary}[theorem]{Corollary}
\newtheorem{lemma}[theorem]{Lemma}
\newtheorem{definition}[theorem]{Definition}
\theoremstyle{remark}
\newtheorem{remark}[theorem]{Remark}
\newcommand{\Rn}{\mathbb R^n}
\newcommand{\Rm}{\mathbb R^m}
\newcommand{\R}{\mathbb R}
\newcommand{\Q}{\mathbb Q}
\renewcommand{\L}[1]{\mathcal L^{#1}}
\newcommand{\G}{\mathbb G}
\newcommand{\eps}{\epsilon}
\newcommand{\vf}{\varphi}
\renewcommand{\H}{\mathbb H}
\newcommand{\diam}{{\rm diam}}
\newcommand{\average}{{\mathchoice {\kern1ex\vcenter{\hrule height.4pt
width 6pt depth0pt} \kern-9.7pt} {\kern1ex\vcenter{\hrule height.4pt width
4.3pt depth0pt}
\kern-7pt} {} {} }}
\newcommand{\ave}{\average\int}
\newcommand{\dist}{\mbox{dist}}
\newcommand{\N}{\mathbb N}
\renewcommand{\rho}{\varrho}
\renewcommand{\epsilon}{\varepsilon}
\renewcommand{\theta}{\vartheta}
\newcommand{\Z}{\mathbb Z}
\renewcommand{\L}[1]{\mathcal L^{#1}}
\newcommand{\gl}{\lambda}
\newcommand{\spt}{{\mathrm{spt}}}
\newcommand{\ci}{{\mathbf C}}
\newcommand{\leb}[1]{\mathcal L^{#1}}
\newcommand\blfootnote[1]{
    \begingroup
    \renewcommand\thefootnote{}\footnote{#1}
    \addtocounter{footnote}{-1}
    \endgroup
}
\begin{document}

\title
[Intrinsic differentiability and Area formula]{On some intrinsic differentiability properties for Absolutely Continuous functions between Carnot groups and the Area formula
}


\author[A.~Pinamonti]{Andrea Pinamonti}
\address{Dipartimento di Matematica\\ Universit\'a di Trento\\
Via Sommarive, 14, 38123 Povo TN, Italy}
\email{andrea.pinamonti@unitn.it}

\author [F. Serra Cassano]{Francesco Serra Cassano}
\address{Dipartimento di Matematica\\ Universit\'a di Trento\\
Via Sommarive, 14, 38123 Povo TN, Italy}
\email{francesco.serracassano@unitn.it}
\author [K. Zambanini]{Kilian Zambanini}
\address{Dipartimento di Matematica\\ Universit\'a di Trento\\
Via Sommarive, 14, 38123 Povo TN, Italy}
\email{kilian.zambanini@unitn.it}


\begin{abstract}
We discuss $Q-$absolutely continuous functions between Carnot groups, following Mal\'y's definition for maps of several variables (\cite{Maly}). Such maps enjoy nice regularity properties, like continuity, Pansu differentiability a.e., weak differentiability and an area formula. Furthermore, we extend Stein's result concerning the sharp condition for continuity and differentiability a.e. of a Sobolev map in terms of the integrability of the weak gradient: more precisely, we prove that a Sobolev map between Carnot groups with horizontal gradient of its sections uniformly bounded in $L^{Q,1}$ admits a representative which is $Q$-absolutely continuous.
\end{abstract}
\maketitle
\begin{flushright}
{\it{In Memory of Jan Mal\'y}}
\end{flushright}

\blfootnote{\textbf{Keywords}: Carnot groups, Sobolev spaces, Pansu differentiability, Lorentz space, $Q-$absolutely continuous functions, Area formula.}
\section{Introduction}
In this paper we  study Sobolev regularity, continuity and Pansu differentiability  for  maps \linebreak $f:\Omega\subset\G_1\to\G_2$ satisfying suitable intrinsic notions of absolute continuity or bounded variation, called {\it $Q$-absolute continuity} and {\it bounded $Q$-variation},  where $\G_1$ and $\G_2$ are Carnot groups and $\Omega$ is an open set in $\G_1$. In particular, we will apply such a study for extending in this setting an analogous of the remarkable Euclidean Stein's result \cite{SteinAnnMath} concerning the sharp condition providing  continuity and differentiability a.e. in terms of integrability of the weak gradient, thus paving the way to the study of the gradient regularity of solutions to the sub-Laplace Poisson equation, see e.g. \cite{KM}.
Moreover we will prove an area formula for $Q$-absolutely continuous maps between Carnot group.

The definition and main properties of  a Carnot group are collected in section \ref{CG}. Let us quickly recall here that a \textit{Carnot group} $\G$ is a simply connected nilpotent Lie group which can be represented as $\G\equiv\,(\Rn,\cdot)$ equipped with: \begin{itemize}
    \item 
a family of \textit{intrinsic dilations} $(\delta_\lambda)_{\lambda>0}\colon\G\to\G$, which are automorphisms of $\G$;\item a subbundle $H\G$ of the tangent bundle $T\G$, called {\it horizontal subbundle}, which generates by commutation the whole $T\G$;\item a {\it Carnot-Carath\'eodory} (or, also,  {\it control}) {\it metric} $d_c$ associated to the horizontal subbundle $H\G$, which is left-invariant and homogeneous with respect to the intrinsic dilations. \end{itemize}The Haar measure of $\G$ turns out to be the $n$-dimensional Lebesgue measure $\mathcal L^n$ on $\Rn$ and the  metric (Hausdorff) dimension of $(\G,d_c)$ coincides with its homogeneous dimension $Q$. If $\G$ is commutative, then $\G\equiv\Rn$ as Euclidean vector space; if $\G$ is not commutative, then $Q>\,n$. In particular, we will see that the homogeneous dimension $Q$ will play the same role of the topological dimension $n$ in the Euclidean setting.

A Carnot group is  a sub-Riemannian structure, where several classical differential notions and results of the Euclidean/Riemannian geometry do not apply (see\cite{gromov1,gromov2}). Thus new intrinsic notions are needed in this setting, such as, for instance, the ones of  differentiable and Sobolev regular function between Carnot groups (see Definitions \ref{pansudifferentiability}  and \ref{SobspacesvaluedCarnot}, respectively). Nevertheless, from the intrinsic point of view, some important analogous results to Euclidean setting still hold  for maps  between Carnot groups, such as a Rademacher differentiability result due to Pansu (see Theorem \ref{pansuthm}) and a Sobolev-Morrey  embedding theorem (see,  Theorem \ref{Sobembp>Q}).

Let us  stress that the  intrinsic notion of differentiability  for mappings between Carnot groups, and the related results,  cannot be always recovered within the  theory of differentiation  for mappings between metric spaces (see Section \ref{DiffMS} ).

The notions  of {\it $n$-dimensional  absolutely continuous  function} and {\it $n$-variation}  for a map \linebreak $f:\,\Omega\subset\Rn\to\Rm$ were initially introduced by Mal\'y in \cite{Maly}. They have been studied in several papers (see, for instance, \cite{Cso, Hencl1, Hencl2,KKM}). An historical account as far as the concept of absolute continuity  for functions can be found in \cite{Maly} and the references therein.

These notions can also be introduced in a metric measure setting, thus it is immediate to extend them to maps between Carnot groups. In the following  the balls involved are the ones with respect to the CC distance $d_c$. 
\begin{definition}\label{abscont} Let $\G_1=\,(\Rn,\cdot,d_1)$ and $\G_2=\,(\Rm,*,d_2)$ be two Carnot groups, let $Q$ be the homogeneous dimension of $\G_1$ (see (\ref{homdim})) and let  $f:\,\Omega\subset\G_1\to\G_2$, with  $\Omega$  open set in $\G_1$. 
\begin{itemize}
\item[(i)] $f$ is said to be {\it $Q$-absolutely continuous} if for each $\eps>0$ there exists $\delta=\,\delta(\eps)>\,0$ such that, for each finite disjoint family of open balls $U_1,\dots,U_k$ contained in $\Omega$,
\[
\text{if }\sum_{i=1}^k\leb n (U_i)<\delta\text{ then }\sum_{i=1}^k\left({\rm osc}_{U_i}f\right)^Q<\,\eps
\]
where
\[
{\rm osc}_{U}f:=\,{\rm diam}_{\G_2}(f(U))\,.
\]
\item[(ii)] The {\it $Q$-variation} of $f$ is defined as
\[
V_Q(f,\Omega):=\,\sup\left\{\sum_{i=1}^k\left({\rm osc}_{U_i}f\right)^Q:\{U_1,\dots,U_k\} \text{ disjoint family of balls in }\Omega\,\right\}\,.
\]
We say that $f$ has bounded $Q$-variation if  $V_Q(f,\Omega)<\,\infty$.
\end{itemize}
\end{definition}
We will denote by $BV^Q(\Omega,\G_2)$ the class of functions $f:\,\Omega\subset\G_1\to\G_2$ with bounded $Q$-variation and with
$AC^Q(\Omega,\G_2)$ the class of $Q$-absolutely continuous functions. We use the notation $BV_{\rm loc}^Q(\Omega,\G_2)$ for functions which are locally of bounded $Q$-variation and $AC_{\rm loc}^Q(\Omega,\G_2)$ for the family of functions which are locally $Q$-absolutely continuous.  It actually holds that  $AC^Q(\Omega.\G_2)\subset BV^Q(\Omega,\G_2)$ if $\Omega\subset\G_1$ is a bounded open set (see Proposition \ref{ACsubsetBV}).\par

The study of absolutely continuous functions has been extended from the Euclidean framework to the one of metric measures spaces: firstly, the study of  real-valued functions defined on measure metric spaces (see, for instance, \cite{RaMo,Romanov}), and then the one of metric space-valued maps 
(see, for instance, \cite{WZ,WZ2,MaZ,LZ}). We are going to compare later our results with the previous ones.  \par

Note that also in this context the homogeneous dimension $Q$ of the  Carnot group $\G_1$ plays the same role of the Euclidean dimension. Indeed, we will see that, roughly speaking, the class $AC_{\rm{loc}}^Q(\Omega,\G_2)$ locally lies between the  class of Sobolev maps  which admit weak horizontal gradient with summability  $Q$, that is the class $W_{\rm loc}^{1,Q}(\Omega,\G_2)$  (see Definition 
\ref{SobspacesvaluedCarnot}), and the union over $p>Q$ of the spaces $W_{\rm loc}^{1,p}(\Omega,\G_2)$. Moreover it shares good properties with the maps in $W_{\rm loc}^{1,p}(\Omega,\G_2)$ with $p>Q$.
\par
 In section \ref{ACF} we begin our study by stressing some simple classes of functions contained in $AC^Q(\Omega,\G_2)$. In particular we will see that, if $f$ satisfies the so-called (RR) condition, then $f\in AC^Q(\Omega,\G_2)$ and we will show that, if $f$ is a Sobolev map with exponent $p>Q$, that is $f\in W^{1,p}(\Omega,\G_2)$, then $f\in AC^Q(\Omega,\G_2)$ (see Theorem \ref{p>Qdiff}). Moreover we will prove that maps with finite $Q$-variation between Carnot groups, that is maps in $BV^Q(\Omega,\G_2)$,  are  Pansu-differentiable a.e. and admit weak horizontal gradient (see Theorems \ref{diffthm} and \ref{Weakdiff}). As a consequence, since $AC^Q_{\rm loc}(\Omega,\G_2)\subset BV^Q_{\rm loc}(\Omega,\G_2)$, also  functions in $AC^Q(\Omega,\G_2)$ share the same properties.\par
In section \ref{Steinresult} we will prove that a Sobolev function $f:\Omega\subset\G_1\to\G_2$  admitting weak gradient in the Lorenz space $L^{Q,1}(\Omega)$ admits a representative in $AC^Q(\Omega,\G_2)$ (Theorem \ref{SobmapRR}), extending in this way Stein's result on Euclidean spaces. Moreover we will discuss this result also for maps in a suitable class of Orlicz-Sobolev spaces between Carnot groups (see Appendix \ref{appendix6} and in particular Theorem \ref{finale}).
\par

In section \ref{AF} we will show an area formula for functions in $AC^Q(\Omega,\G_2)$ (see Theorem \ref{areaformulaAC} and Corollary \ref{areaformula2}), which extends the one proved independently in \cite{Ma2001} and in \cite{Vo00} for Lipschitz functions.
\par
Eventually in Appendix \ref{DiffMS} we will collect, for sake of completeness,  some relevant notions and results about the differentiability and Sobolev regularity  of metric spaces-valued maps defined on a metric measure space, in order to compare our results with the previuos ones.

\vspace{5pt}

\textbf{Acknowledgments.}
The authors are members of {\em Gruppo Nazionale per l'Analisi Matematica, la Probabilit\`a e le loro Applicazioni} (GNAMPA), of the {\em Istituto Nazionale di Alta Matematica} (INdAM) and they are partially funded by the European Union under NextGenerationEU. PRIN 2022 Prot. n. 2022F4F2LH.
The authors would like to thank B. Franchi, R. Serapioni and V. Magnani for very useful discussions on the topics of the present paper.  We would like to thank the referee for carefully reading the paper and for giving such constructive comments which substantially helped improving the quality of the paper.

\section{Preliminaries}\label{CG}
%

\subsection{Carnot groups}

A \textit{Carnot group} $(\G,\cdot)$ is a simply connected and nilpotent Lie group whose Lie algebra $\mathfrak{g}$ of left-invariant vector fields has dimension $n$ and admits a stratification of step $s\in\N$, that is
\begin{equation*}
\mathfrak g=V_1\oplus V_2\oplus\cdots\oplus V_s,
\end{equation*}
where the vector spaces $V_1,\dots,V_s\subset\mathfrak g$ satisfy
\begin{equation*}
V_i=[V_1,V_{i-1}]\quad \text{for}\ i=2,\ldots,s, \quad [V_1,V_s]=\{0\}. 
\end{equation*}
We set $m_i=\dim(V_i)$ and $h_i=\sum_{j=1}^i m_j$ for $i=1,\dots,s$. 
We fix an \emph{adapted basis} of $\mathfrak{g}$, i.e.\ a basis $X_1,\dots,X_n$ such that
\begin{equation*}
X_{h_{i-1}+1},\dots,X_{h_i}\ \text{is a basis of}\ V_i,\quad i=1,\dots,s.
\end{equation*}
The subspace $V_1$ of $\mathfrak g$ is also called \textit{horizontal layer}, while $X_1,\dots,X_{m_1}$ are the \textit{generating vector fields} of the group.
We endow the algebra $\mathfrak{g}$ with the left-invariant Riemannian metric $\langle{\cdot,\cdot}\rangle$ that makes the basis $X_1,\dots,X_n$ orthonormal.
Exploiting the exponential identification $p=\exp\left(\sum_{i=1}^np_iX_i\right)$, we can identify $\G$ with $\R^n$, endowed with the group law determined by the Campbell--Hausdorff formula.
In particular, the identity $e\in\G$ corresponds to $0\in\R^n$ and the \emph{inversion map} becomes $\iota(p)=p^{-1}=-p$ for any $p\in\G$.
Moreover, it is not restrictive to assume that $X_i(0)=\mathrm{e}_i$ for any $i=1,\dots,n$. 
Therefore, by left-invariance, we get
\begin{equation*}
X_i(p)=d\tau_p\mathrm{e}_i, \quad i=1,\dots,n,
\end{equation*}
where $\tau_p\colon\G\to\G$ is the \emph{left-translation} by $p\in\G$, i.e.\ $\tau_p(q)=p\cdot q$ for any $q\in\G$. \\
The subbundle of the tangent bundle $T\G$ that is spanned by the
vector fields $X_1,\dots,X_{m_1}$ plays a particularly important
role in the theory and it is called the {\it horizontal bundle}
$H\G$; the fibers of $H\G$ are $$ H\G_x=\mbox{span
}\{X_1(x),\dots,X_{m_1}(x)\},\qquad x\in\G .$$ 
For any $i=1,\dots,n$, the \emph{degree} $d(i)\in\{1,\dots,s\}$ of the basis vector field $X_i$ is $d(i)=j$ if and only if $X_i\in V_j$. 
The group \emph{dilations} $(\delta_\lambda)_{\lambda\ge0}\colon\G\to\G$ are hence given by
\begin{equation*}
\delta_\lambda(p)=\delta_\lambda(p_1,\dots,p_n)
=(\lambda p_1,\dots,\lambda^{d(i)} p_i,\dots,\lambda^s p_n) 
\quad
\text{for all}\ p\in\G.
\end{equation*}
The Haar measure of the group $\G$ coincides with the $n$-dimensional Lebesgue measure~$\mathcal L^n$.
The homogeneity property $\mathcal L^{n} 
 (\delta_\lambda(E))=\lambda^Q\mathcal L^n (E)$ holds for any measurable set $E\subset\G$, where 
 \begin{equation}\label{homdim}
 Q:= \sum_{i=1}^s i\dim(V_i)\in\N  
 \end{equation} 
is the \emph{homogeneous dimension} of $\G$.
For notational convenience, we will use also the shorthand $|E|=\mathcal{L}^n(E)$.\\
In the following proposition (see \cite{FSSCstep2}) we collect some fairly
elementary properties of the group operation.
\begin{proposition}\label{legge di gruppo}
The group product has the form
\begin{equation} x\cdot y=x+y+\mathcal Q(x,y),\qquad
\forall x,y\in\Rn \label{legge di gruppo1}
\end{equation}
where $\mathcal Q=(\mathcal Q_1,\dots,\mathcal
Q_n):\Rn\times\Rn\to\Rn$ and each $\mathcal Q_i$ is a homogeneous
polynomial of degree $d(i)$ i.e.
\begin{equation*}\mathcal Q_i(\delta_\lambda x,\delta_\lambda
y)=\lambda^{d(i)}\mathcal Q_i(x,y),\quad \forall x,y\in \G.
\end{equation*} 
Moreover $\mathcal Q$ is anti-symmetric, which means
\begin{equation*}
\mathcal Q_j(x,y) = - \mathcal Q_j(-y,-x),
\end{equation*}
and also
\begin{gather}
\mathcal Q_1(x,y)=...=\mathcal Q_{m_1}(x,y)=0,\notag\\
\label{legge di gruppo3} \mathcal Q_j(x,0)=\mathcal
Q_j(0,y)=0,\\
\nonumber\mathcal Q_j(x,x)=\mathcal Q_j(x,-x)=\mathcal Q_j(-x,x)=0,\quad
\text{for}\;\;m_1< j\leq n,\\\nonumber \mathcal Q_j(x,y)=\mathcal
Q_j(x_1,\dots,x_{h_{i-1}},y_1,\dots,y_{h_{i-1}}),
\quad\text{if}\quad 1<i\leq s\quad\text{and}\quad   j\leq
h_i.\label{legge di gruppo2}
\end{gather}


\end{proposition}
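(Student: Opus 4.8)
The plan is to derive all the assertions from the Baker--Campbell--Hausdorff (BCH) formula, which governs the group law in the exponential coordinates fixed above. Since $\mathfrak g$ is nilpotent of step $s$, for $x=\sum_{i=1}^n x_iX_i$ and $y=\sum_{i=1}^n y_iX_i$ — identified with the corresponding points of $\R^n$ — we have $x\cdot y=\exp(Z)$ with
\begin{equation*}
Z=x+y+\tfrac12[x,y]+\tfrac1{12}\bigl([x,[x,y]]-[y,[x,y]]\bigr)+\cdots,
\end{equation*}
a \emph{finite} rational linear combination of iterated Lie brackets of $x$ and $y$ of length between $2$ and $s$ (brackets of length $>s$ vanish by nilpotency). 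Setting $\mathcal Q(x,y):=Z-x-y$ gives \eqref{legge di gruppo1} at once, and expanding each iterated bracket through the structure constants of the adapted basis $X_1,\dots,X_n$ shows that every $\mathcal Q_i$ is a polynomial in $x_1,\dots,x_n,y_1,\dots,y_n$. Since $x\cdot 0=0\cdot x=x$, comparison with \eqref{legge di gruppo1} gives $\mathcal Q_j(x,0)=\mathcal Q_j(0,y)=0$; equivalently, every surviving term of the BCH series involves at least one $x$- and one $y$-entry, because an iterated bracket whose innermost two entries coincide is zero.

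The second step is the degree bookkeeping forced by the stratification. A nonzero iterated bracket $[X_{i_1},[X_{i_2},\dots,[X_{i_{k-1}},X_{i_k}]\cdots]]$ lies in $V_{d(i_1)+\cdots+d(i_k)}$, because $[V_a,V_b]\subseteq V_{a+b}$. Hence in the expansion of $\mathcal Q_j$ such a bracket occurs only when $d(i_1)+\cdots+d(i_k)=d(j)$, multiplied by a monomial in the coordinates $x_{i_1},\dots,y_{i_k}$ whose degrees $d(i_1),\dots,d(i_k)$ sum to $d(j)$. Since $\delta_\lambda$ multiplies a coordinate of degree $d$ by $\lambda^{d}$, each such monomial, and therefore $\mathcal Q_j$, is multiplied by $\lambda^{d(j)}$; this is the claimed homogeneity of the $\mathcal Q_i$. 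Because $k\ge2$ and each $d(i_r)\ge1$, the total degree of a contributing bracket is $\ge2$, so $\mathcal Q_j\equiv0$ for $1\le j\le m_1$; and each $d(i_r)\le d(j)-1$, so only coordinates of index $\le h_{d(j)-1}$ can appear in $\mathcal Q_j$. Consequently, if $j\le h_i$ (i.e.\ $d(j)\le i$), then $\mathcal Q_j$ depends only on $x_1,\dots,x_{h_{i-1}},y_1,\dots,y_{h_{i-1}}$.

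Finally, to obtain the antisymmetry relations I would use elementary identities valid in exponential coordinates on a nilpotent group: $t\mapsto\exp(tx)$ is the one-parameter subgroup through $x$, so $x\cdot x=2x$, and $x^{-1}=-x$, whence $x\cdot(-x)=(-x)\cdot x=0$ and $(x\cdot y)^{-1}=y^{-1}\cdot x^{-1}=(-y)\cdot(-x)$. Substituting into \eqref{legge di gruppo1}: $x\cdot(-x)=0$ gives $\mathcal Q(x,-x)=0$ and $(-x)\cdot x=0$ gives $\mathcal Q(-x,x)=0$; $x\cdot x=2x$ gives $\mathcal Q(x,x)=0$ — all three for every index, in particular for $m_1<j\le n$; and $-(x+y+\mathcal Q(x,y))=-y-x+\mathcal Q(-y,-x)$ gives $\mathcal Q_j(x,y)=-\mathcal Q_j(-y,-x)$.

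The whole argument is elementary; the only part demanding attention is the degree bookkeeping — keeping precise track of how iterated brackets interact with the grading $\mathfrak g=V_1\oplus\cdots\oplus V_s$ and with the dilations $\delta_\lambda$. Alternatively, one may quote \eqref{legge di gruppo1} together with the $\delta_\lambda$-homogeneity of the $\mathcal Q_i$ directly from the structure theory of graded nilpotent Lie groups (e.g.\ \cite{FSSCstep2}) and then deduce the remaining identities exactly as above.
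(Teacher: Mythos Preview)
Your proof is correct and complete. The paper itself does not prove this proposition: it simply states it and refers the reader to \cite{FSSCstep2} for the details, describing the properties as ``fairly elementary''. So there is nothing to compare against beyond noting that your BCH-based argument is exactly the standard route, and the one \cite{FSSCstep2} takes as well.

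One small stylistic remark: in the dependence claim you write ``if $j\le h_i$ (i.e.\ $d(j)\le i$)'', which is fine, but the paper's statement is specifically for $1<i\le s$ and $j\le h_i$; your argument actually gives the slightly sharper fact that $\mathcal Q_j$ depends only on coordinates of degree at most $d(j)-1$, from which the stated version follows immediately. This is not a gap, just a point where your proof delivers marginally more than is asked.
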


\begin{definition}\label{horizontalcurve}
An absolutely continuous curve $\gamma\colon [a,b]\to \mathbb{G}$ is horizontal if there exist $u_{1}, \dots, u_{m_1}\in L^{1}[a,b]$ such that \[
\gamma'(t)=\sum_{j=1}^{m_1}u_{j}(t)X_{j}(\gamma(t)) \quad \mbox{for a.e.}\  t\in [a,b].
\]
We define the \emph{horizontal length} of a horizontal curve $\gamma$ by $L(\gamma)=\int_{a}^{b}|u(t)|dt$, where $u=(u_{1}, \dots, u_{m_1})$ and $|\cdot|$ denotes the Euclidean norm on $\mathbb{R}^{m_1}$.
\end{definition}

Chow-Rashevskii Theorem asserts that any two points of $\mathbb{G}$ can be connected by horizontal curves \cite[Theorem 9.1.3]{BLU}. Actually, any two points can be connected by a geodesic, despite such a curve can be not unique \cite[Theorem 1.4.4]{monti}. This allows us to define the \emph{Carnot-Carath\'eodory distance (CC distance)} as follows.

\begin{definition}
The \emph{CC distance} between $x,y\in \mathbb{G}$ is
\[d_c(x,y):=\inf \{ L(\gamma) |\, \gamma \colon [0,1]\to \mathbb{G} \mbox{ horizontal joining }x\mbox{ to }y \}.\]
\end{definition}

This CC metric induces the Euclidean topology and it is an \textit {invariant distance}, since it satisfies $d_c(zx,zy)=d_c(x,y)$ (left translation invariance) and $d_c(\delta_{r}(x),\delta_{r}(y))=rd_c(x,y)$ (homogeneity) for $x, y, z\in \mathbb{G}$ and $r>0$. Every invariant distance $d$ induces a \textit{homogeneous norm}, simply defining $\|x\|:=d(x,0)$.
We denote the CC open ball with center $x$ and radius $r$ by $U_r(x)$ or by $U(x,r)$ and in particular we use the shorthand $U_1=U(0,1)$. When convenient, we will consider the closed CC ball of center $x$ and radius $r$, denoted by $B(x,r)$. It can be proved that $\mathcal{L}^n(U_r(x))=r^Q \mathcal{L}^n(U_1(0))$, where $Q$ is the homogeneous dimension of $\G$ (see (\ref{homdim})). Therefore $Q$ turns out to be the Hausdorff dimension of the metric measure space $(\G,d_c,\mathcal L^n)$. In particular note that, if the step of $\G$ is $s\ge\,2$, then $Q>\,n$, which means that the Hausdorff metric dimension of $\G$ is larger than its topological dimension $n$.

Following~\cite[Thm. 5.1]{FSSCstep2}, we also introduce the left-invariant and homogeneous distance $d_\infty(p,q)=d_\infty(q^{-1}\cdot p,0)$ for $p,q\in\G$, where, identifying $\G=\R^n=\R^{m_1}\times\dots\times\R^{m_s}$ and letting $\pi_{\R^{m_i}}\colon\R^n\to\R^{m_i}$ to be the canonical projection for $i=1,\dots,s$, 
\begin{equation}
\label{eq:distance}
d_\infty(p,0)
=
\max\{\epsilon_i|\pi_{\R^{m_i}}(p)|_{\R^{m_i}}^{1/i} : i=1,\dots,s\}
\quad
\text{for all}\ p\in\G
,
\end{equation}
with constants $\epsilon_1=1$ and $\epsilon_i\in(0,1)$ for all $i=2,\dots,s$ depending on the structure of $\G$.
We use the shorthand $\|p\|_\infty=d_\infty(p,0)$ for $p\in\G$.
Consequently, for $p\in\G$ and $r>0$, we define the open and closed balls with respect to this metric as
\begin{equation*}
U_{\infty}(p,r)
=
\{q\in\G : d_{\infty}(q,p)<r\},
\qquad
 B_{\infty}(p,r)
=
\{q\in\G : d_{\infty}(q,p)\le r\}.
\end{equation*}

\subsection{Differential Calculus on Carnot Groups}
This section is taken from \cite{FSSCstep2} and \cite[Chapter 3]{magnani thesis}; we refer to \cite{Ma3} for a generalization of these contents.
The notion of \emph {P-differentiability} for functions acting between Carnot groups was introduced by Pansu in \cite{pansu}.
\begin{definition}\label{Pdiff}
  Let $\G_1, \G_2$ be Carnot groups, with homogeneous
  norms $\|\cdot\|_1$, $\|\cdot\|_2$ and dilations $\delta^1_\lambda$, $\delta^2_\lambda$. We say that 
  $L:\G_1\to\G_2$ is
   \emph{H-linear}, or
  is a \emph {homogeneous homomorphism}, if $L$ is a group
   homomorphism  such that 
$$
   L(\delta^1_\lambda g)=\delta^2_\lambda L(g), \quad \text{for all  }g\in \G_1 \text{ and } \gl>0.
$$
\end{definition}

\begin{definition}\label{pansudifferentiability} Let $\Omega\subset\G_1$ be open.
We say that $f:\Omega\to\G_2$ is \emph {P-differentiable} at $x_0\in\Omega$ if
  there is a $H$-linear function $d_Pf(x_0):\G_1\rightarrow\G_2$ such that
\begin{equation*}
  \|\left(d_Pf(x_0)(x_0^{-1}\cdot
  x)\right)^{-1}\cdot f(x_0)^{-1}\cdot f(x)\|_2= o\big( \| x_0^{-1}\cdot x\|_1\big),\quad  \text {as $\|x_0^{-1}\cdot x\|_1\to 0$}
\end{equation*}
 where $o(t)/t\to 0$ as $t\to 0^+$.
The $H$-linear map $d_Pf(x_0)$ is called  the \emph{Pansu's differential} of $f$ at $x_0$.
\end{definition}
The fundamental result where P-differentiability applies is the following generalization of Rademacher theorem and it is due to P. Pansu \cite{pansu}. Other possible proofs of this result were obtained by S.K. Vodopyanov \cite{Vo00} and by V. Magnani \cite{Ma2001}, see also \cite{monti}. Pansu's theorem was later extended by Magnani in \cite{Ma3}, in which a general homogeneous group as target space is considered. In \cite{MR} the authors proved the result for Banach space valued mappings (with the Radon-Nikodym property on the first layer); see also the subsequent work \cite{MPS}.

\begin{theorem}[Pansu \cite{pansu}]\label{pansuthm} Let $\G_1=(\R^n,\cdot)$ and $\G_2=(\R^m,*)$ be two Carnot groups equipped with two CC metrics denoted, respectively, by $d_1$ and $d_2$. Let $f:\,\Omega\subset \G_1 \to \G_2$  be a Lipschitz continuous function with $\Omega$ an open set. Then $f$ is P-differentiable at $\mathcal L^n$-a.e. in $\Omega$ .
\end{theorem}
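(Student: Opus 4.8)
The plan is to reduce Pansu's theorem to the classical Euclidean Rademacher theorem applied along horizontal directions, combined with a group-integration argument. First I would observe that a Lipschitz map $f\colon\Omega\subset\G_1\to\G_2$ with respect to the CC metrics is, in particular, Lipschitz with respect to the underlying Euclidean distances on bounded sets (since $d_c$ is comparable near a point to $d_\infty$, and the first layer contributes linearly while higher layers contribute with fractional powers that are locally dominated). Hence by the classical Rademacher theorem $f$ is (Euclidean) differentiable $\mathcal{L}^n$-a.e. in $\Omega$; call the full-measure set of such points $E$. The real content is to upgrade Euclidean differentiability at a point $x_0\in E$ to P-differentiability, i.e.\ to show that the Euclidean differential, suitably interpreted, is actually H-linear as a map $\G_1\to\G_2$ and that the remainder is $o(\|x_0^{-1}\cdot x\|_1)$ in the homogeneous sense.

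The key steps, in order, would be: (1) fix $x_0\in E$ and pass to the left-translated, rescaled maps $f_\lambda(y):=\delta^2_{1/\lambda}\big(f(x_0)^{-1}\cdot f(x_0\cdot\delta^1_\lambda y)\big)$; the Lipschitz bound for $f$ (with respect to $d_1,d_2$) is preserved uniformly in $\lambda$, so the family $\{f_\lambda\}$ is precompact in the local uniform topology by Ascoli--Arzel\`a. (2) Show that along every horizontal line through $0$ the curve $t\mapsto f_\lambda(\exp(tX_j)\cdot\ )$ has a limit governed by the horizontal partial derivatives $X_jf(x_0)$, which exist because those directional derivatives reduce to Euclidean ones along the integral curves of $X_1,\dots,X_{m_1}$ and $x_0$ is a point of Euclidean differentiability; this forces any limit $L$ of $f_\lambda$ to agree on horizontal one-parameter subgroups with a fixed candidate homomorphism. (3) Use that $\G_1$ is generated by its horizontal layer and that $L$ is a pointwise limit of "almost homomorphisms" to conclude that $L$ is a group homomorphism, and that it commutes with dilations (H-linearity) because it arose as a blow-up limit, hence is automatically $\delta$-homogeneous. (4) By uniqueness of the limit $L=d_Pf(x_0)$, the whole family $f_\lambda$ converges, which is exactly the assertion of P-differentiability at $x_0$.

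Alternatively, and perhaps more cleanly, one can invoke a Fubini-type slicing: by Rademacher each Euclidean-differentiability point contributes, and for a.e.\ horizontal line $\ell$ the restriction $f|_\ell$ is absolutely continuous with derivative $\sum u_j X_jf$; integrating this data over the group using that a.e.\ point lies on a full-measure set of horizontal lines in each of the $m_1$ coordinate directions, one reconstructs $d_Pf(x_0)$ at a.e.\ $x_0$ and verifies the homogeneous remainder estimate by a standard difference-quotient argument along broken horizontal paths (whose CC length is controlled by $\|x_0^{-1}\cdot x\|_1$ via Chow's theorem with uniform constants on compacta).

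The main obstacle I expect is step (3): showing that the blow-up limit $L$, which a priori is only an $\mathcal{L}^n$-Lipschitz map sending horizontal subgroups to horizontal subgroups "correctly", is genuinely H-linear. The delicate point is controlling the higher-layer components: the homogeneous remainder $\|x_0^{-1}\cdot x\|_1$ weights a displacement in $V_i$ like its $1/i$ power, so a term that is harmless for Euclidean differentiability (being quadratic or higher in the first-layer increments, hence lower order Euclideanly) must still be shown to be genuinely $o$ of that fractional quantity — this is where one crucially uses that the group law $x\cdot y=x+y+\mathcal{Q}(x,y)$ has $\mathcal{Q}_i$ homogeneous of degree $d(i)$ with vanishing diagonal terms (Proposition \ref{legge di gruppo}), so that the nonlinear corrections in the blow-up rescale with exactly the right power and survive in the limit precisely as the bracket-generated part of the homomorphism $L$, neither blowing up nor vanishing.
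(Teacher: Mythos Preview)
The paper does not prove this theorem; it is cited from Pansu, with pointers to alternative proofs by Vodop'yanov and Magnani, so there is no in-paper argument to compare against.

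Your opening reduction contains a genuine error: a CC-Lipschitz map between Carnot groups is in general \emph{not} Euclidean-Lipschitz on bounded sets, so classical Rademacher is unavailable. You have the direction of the local comparison reversed: on a step-$s$ group one has $|x-y|_{\Rn}\lesssim d_c(x,y)\lesssim |x-y|_{\Rn}^{1/s}$ near the diagonal, because an increment in layer $V_i$ contributes like $|t|^{1/i}$ to the homogeneous distance, and for small $|t|$ this is \emph{larger} than $|t|$, not dominated by it. A concrete counterexample is the CC norm itself: $p\mapsto d_c(0,p)$ is $1$-Lipschitz for $d_c$ on any non-abelian Carnot group, yet along the center it behaves like $|t|^{1/s}$ and is not Euclidean-Lipschitz at $0$. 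Hence your full-measure set $E$ of Euclidean differentiability points is not guaranteed, and step~(2) --- which extracts the horizontal directional derivatives from Euclidean differentiability at $x_0$ --- has no input. Your ``alternative'' Fubini paragraph is likewise phrased as starting from ``each Euclidean-differentiability point'', so it inherits the same gap.

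The blow-up framework you describe in steps (1), (3), (4) is indeed the skeleton of the standard proof, but the a.e.\ existence of horizontal directional derivatives must be obtained without Euclidean Rademacher. The usual route is intrinsic: for each $j=1,\dots,m_1$ the curve $t\mapsto f(\exp(tX_j)(x))$ is CC-Lipschitz from $\R$ into $(\G_2,d_2)$, hence (and this already uses the stratified structure of $\G_2$) a horizontal curve in $\G_2$, differentiable for a.e.\ $t$ with derivative lying in the first layer of $\G_2$; Fubini over the foliation of $\Omega$ by $X_j$-integral curves then yields existence of $X_jf(x)$ for a.e.\ $x$. Only with this corrected input does your Arzel\`a--Ascoli compactness plus bracket-generation argument in (3)--(4) go through; the higher-layer issue you correctly flag in (3) is real, but it cannot even be reached until step~(2) is repaired.
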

A version of the classical Stepanov's theorem \cite{Stepanov} holds for maps between Carnot groups, which extends Pansu-Rademacher theorem (see \cite[Theorem 3.1]{Vo00}).
\begin{theorem}[Stepanov's theorem for Carnot groups]\label{Stepanov} Let $\G_1=(\R^n,\cdot)$ and $\G_2=(\R^m,*)$ be two Carnot groups equipped with two CC metrics denoted, respectively, by $d_1$ and $d_2$. Let $f:\,\Omega\subset \G_1\to \G_2$  and assume that
\begin{equation}\label{puLcf}
{\rm Lip}(f)(p):=\,\limsup_{q\to p}\frac{d_2(f(q),f(p))}{d_1(q,p)}<\infty \text{ for $\leb n$-a.e. }p\in\Omega.
\end{equation}
Then $f$ is P-differentiable at $\mathcal L^n$-a.e. $p\in\Omega$ .
\end{theorem}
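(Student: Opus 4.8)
The plan is to run the classical reduction of Stepanov's theorem (\cite{Stepanov}) to the Rademacher-type statement — here Pansu's theorem \refth{pansuthm} — working throughout in the Ahlfors $Q$-regular, hence doubling, metric measure space $(\G_1,d_1,\L n)$, where Lebesgue's density theorem holds. Put $A:=\{p\in\Omega:{\rm Lip}(f)(p)<\infty\}$; by hypothesis $\L n(\Omega\setminus A)=0$, so it suffices to prove that $f$ is P-differentiable at $\L n$-a.e.\ point of $A$. Note that at each $p\in A$ the map $f$ is continuous, so $f$ is continuous $\L n$-a.e.\ on $\Omega$.

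First I would localise the pointwise Lipschitz bound. For $k\in\N$ set
\[
A_k:=\Big\{p\in\Omega:\ d_2(f(q),f(p))\le k\,d_1(q,p)\ \text{ for every }q\in\Omega\text{ with }d_1(q,p)<\tfrac1k\Big\},
\]
so that $A=\bigcup_k A_k$ (the $\L n$-measurability of $A_k$ is a routine point, e.g.\ one may restrict $q$ to a countable dense set and use continuity of $f$ on $A$). Covering each $A_k$ by countably many balls of radius $<\tfrac1{3k}$ and intersecting, I obtain a countable family of bounded measurable sets $\{C_j\}_j$, with $A=\bigcup_j C_j$, and integers $k_j$, such that $C_j\subset A_{k_j}$ and $f|_{C_j}$ is $k_j$-Lipschitz. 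Applying Pansu's theorem \refth{pansuthm} to each restriction $f|_{C_j}$ — precisely, its version for Lipschitz maps defined on a measurable set, cf.\ \cite{Vo00} — and combining with Lebesgue's density theorem, I get that, after removing an $\L n$-null set, each $x_0\in A$ belongs to some $C_j$ which has density $1$ at $x_0$ and at which $f|_{C_j}$ is P-differentiable: there is an $H$-linear $L=L_j(x_0)\colon\G_1\to\G_2$ with
\[
d_2\big(f(x),\,f(x_0)\cdot L(x_0^{-1}\cdot x)\big)=o\big(d_1(x_0,x)\big)\qquad\text{as }x\to x_0,\ x\in C_j;
\]
moreover $C_j\subset A_{k_j}$ and ${\rm Lip}(f)(x_0)<\infty$.

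Then I would upgrade this to full P-differentiability of $f$ at such an $x_0$, with $d_Pf(x_0)=L$ — the delicate point being that P-differentiability requires the difference quotient to vanish along every approach $x\to x_0$, not only along a density-$1$ set. For $x\in C_j$ this is the displayed estimate. For $x\notin C_j$ near $x_0$, a volume estimate based on the density of $C_j$ at $x_0$ and the homogeneity $\L n(U(y,r))=c\,r^Q$ gives ${\rm dist}(x,C_j)=o(d_1(x_0,x))$ as $x\to x_0$; so I may pick $x'\in C_j$ with $d_1(x,x')\le 2\,{\rm dist}(x,C_j)$ and set $\eta(x):=d_1(x,x')/d_1(x_0,x)\to0$. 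Writing $\mathcal A(y):=f(x_0)\cdot L(x_0^{-1}\cdot y)$ and using the triangle inequality for $d_2$,
\[
d_2\big(f(x),\mathcal A(x)\big)\le d_2\big(f(x),f(x')\big)+d_2\big(f(x'),\mathcal A(x')\big)+d_2\big(\mathcal A(x'),\mathcal A(x)\big),
\]
I would bound the first term by $k_j\,d_1(x,x')=\eta(x)\,k_j\,d_1(x_0,x)$ — legitimate since $x'\in A_{k_j}$ and $d_1(x,x')<1/k_j$ for $x$ close to $x_0$ — the second by $o(d_1(x_0,x'))=o(d_1(x_0,x))$ from the P-differentiability of $f|_{C_j}$ at $x_0$ (as $d_1(x_0,x')\le 2d_1(x_0,x)\to0$), and the third, using left-invariance of $d_1$ and $d_2$, by $C_L\,d_1(x',x)=\eta(x)\,C_L\,d_1(x_0,x)$, where $C_L:=\sup_{\|w\|_1=1}\|L(w)\|_2<\infty$ and every $H$-linear map is $C_L$-Lipschitz by homogeneity and continuity on the unit sphere. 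Since $\eta(x)\to0$, the sum is $o(d_1(x_0,x))$, i.e.\ $f$ is P-differentiable at $x_0$; as this holds for $\L n$-a.e.\ $x_0\in A$, and $\L n(\Omega\setminus A)=0$, the theorem follows.

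I expect the only real obstacle to be the use of Pansu's theorem for the Lipschitz restrictions $f|_{C_j}$: a $\G_2$-valued Lipschitz map defined on a subset of $\G_1$ need not admit any global Lipschitz extension when $\G_2$ is a general Carnot group, so one cannot simply apply \refth{pansuthm} on an open set. One therefore has to quote (or reprove, by the blow-up and self-improvement argument at density points, as in \cite{Vo00,Ma3}) the version of Pansu's theorem valid on measurable domains. The remaining ingredients — the decomposition of $A$, the measurability and density bookkeeping, and the elementary metric estimates above — are standard.
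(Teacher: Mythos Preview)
The paper does not give its own proof of this statement: Theorem \ref{Stepanov} is stated with a reference to \cite[Theorem 3.1]{Vo00} and no argument is supplied. Your proposal is the standard Stepanov-to-Rademacher reduction carried out in the Carnot setting, and it is correct; in particular you have correctly flagged the one genuine subtlety, namely that one must invoke the version of Pansu's theorem valid on measurable domains (as in \cite{Vo00} or \cite[Theorem 3.9]{Ma2001}, cf.\ Remark \ref{diffonmeasurable} in the present paper) because Lipschitz maps into a noncommutative Carnot target need not extend.
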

Let $X_1,\dots,X_{m_1}$ be a generating family of vector fields. For $j=1,\dots,m_1$, we say that $f:\G\to\R$ is {\it differentiable along} $X_j$ at a point $x_0$ if the map $\lambda\mapsto f(\tau_{x_0}
(\delta_{\lambda} e_j))\,=f(\exp(\lambda X_j)(x_0))$
is differentiable at $\lambda =0$, where $e_j$ is the
$j$-th vector of the canonical basis of $\Rn$.
In other words, $f$ is differentiable along $X_j$ at $x_0$ if there exists
\[X_jf(x_0):=\lim_{t\to 0}\frac{f(\exp(tX_j)(x_0))-f(x_0)}{t}.\]
Even if $f$ is not differentiable, we can still define the discrete difference quotients:
\begin{definition}\label{diff.quo}
Let $f:\Omega\to \mathbb{R},\, i=1,\dots,m_1$. We define the \textit{difference quotient} at $x$ in direction $X_i$ as
    \[D^h_if(x):=\frac{f(\exp(hX_i)(x))-f(x)}{h}\]
    defined for $x\in\Omega'\subset\subset\Omega$ with $0<|h|<\dist(\Omega', \partial \Omega)$. \end{definition}
Notice that, if $f:\Omega\subset\G\to\R$ is P-differentiable at a point $x_0$, then it is also differentiable along $X_j$ at $x_0$ and 
\[X_jf(x_0)= d_Pf(x_0)(e_j).\]
 For any function $f:\G\to \R$ for which the partial derivatives
$X_jf$ exist, we
define the \textit{horizontal gradient} of $f$, denoted by
$\nabla_{\G}f$, as 
\begin{equation*}
\nabla_{\G}f:=\sum_{i=1}^{m_1}(X_if)X_i.
\end{equation*}
We will represent the horizontal gradient in coordinates $\nabla_\mathbb Gf=(X_1f,\dots,X_{m_1}f).$
We conclude this section by recalling the notion of convolution in Carnot groups; we address the interested reader to \cite[Proposition 1.20]{FS}
for the proof.
\begin{lemma}\label{lemmacal}
Let $\G=(\Rn,\cdot)$ be a Carnot group and let $\rho\in \ci^{\infty}(\Rn )$ be such that $0\le\rho\le 1$, $\int_{\Rn}\rho \,d\mathcal L^n=1$, $\spt\rho\subset U_1$ and $\rho(x^{-1})=\rho(-x)=\rho(x)$ for all $x\in\Rn$. Let us denote 
\begin{equation}\label{mollifier}
\rho_{\eps}(x):=\eps^{-Q}\rho\left(\delta_{1/\eps}(x)\right)\qquad x\in\Rn\,;
\end{equation}
\begin{equation}\label{convolmollifier}
(f\star \rho_{\eps})(x):=\int_{\Rn} \rho_{\eps}(x\cdot\, y^{-1})\,f(y)\,d\mathcal L^n(y)=\int_{\Rn} \rho_{\eps}(y)\,f(y^{-1}\cdot\,x)\,d\mathcal L^n(y)
\end{equation}
Then
\begin{itemize}
\item[(i)] if $f\in L^p(\Rn),\ 1\le p<\infty$, then $f\star \rho_{\eps}\in \ci^{\infty}(\Rn)$ and $f\star \rho_{\eps}\to f$ in $L^p(\Rn)$ as $\eps\to 0$;
\item[(ii)] $\spt\ f\star \rho_{\eps}\subset \spt\ f\cdot B(0,\eps)$;
\item[(iii)] $X_j(\vf\star \rho_{\eps})=X_j\vf\star \rho_{\eps}$ for any $\vf\in \ci_c^{\infty}(\Rn)$ and each $j=1,\dots,m_1$;
\item[(iv)] $\int_{\Rn}(f\star\rho_{\eps})\,g\,d\mathcal L^n= \int_{\Rn}(g\star\rho_{\eps})\,f\,d\mathcal L^n$ for every $f\in L^{\infty}(\Rn)$, $g\in L^{1}(\Rn)$;
\item[(v)] if $f\in L^{\infty}(\Rn)\cap \ci^0(\Omega)$ for a suitable open set $\Omega\subset\Rn$ then $f\star\rho_{\eps}\to f$ uniformly on compact subsets of $\Omega$ as $\eps\to 0$.
\end{itemize}
\end{lemma}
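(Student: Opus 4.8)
The plan is to transcribe the classical Euclidean mollification theory to the group setting. Everything rests on four structural facts about $\G\equiv(\Rn,\cdot)$ that I would record at the outset: (a) $\mathcal L^n$ is the \emph{bi-invariant} Haar measure of $\G$ — by Proposition~\ref{legge di gruppo} the maps $y\mapsto x\cdot y$ and $y\mapsto y\cdot x$ have triangular differential with $1$'s on the diagonal, hence unit Jacobian, and $y\mapsto y^{-1}=-y$ is also measure preserving; (b) the group law and the inversion are polynomial, hence $C^\infty$; (c) the generating vector fields $X_1,\dots,X_{m_1}$ are left-invariant; (d) each $\rho_\eps$ is symmetric, $\rho_\eps(x^{-1})=\rho_\eps(x)$, because $x^{-1}=-x$ and $\delta_\lambda$ commutes with $x\mapsto-x$. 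Throughout I would switch between the two forms of the convolution in (\ref{convolmollifier}), always choosing the one in which the variable I need to move lies inside the argument of $\rho_\eps$ rather than of $f$.

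\textbf{Parts (i) and (iii).} For the smoothness claim in (i), I would use the first form $(f\star\rho_\eps)(x)=\int\rho_\eps(x\cdot y^{-1})f(y)\,d\mathcal L^n(y)$: for fixed $y$ the integrand is a $C^\infty$ function of $x$ by (b), and on a fixed compact set all of its $x$-derivatives are bounded uniformly in $y$ by a constant times the indicator of $\{y: x\cdot y^{-1}\in B(0,\eps)\}$; since $f\in L^p\subset L^1_{\rm loc}$, one may differentiate under the integral to any order and conclude $f\star\rho_\eps\in C^\infty(\Rn)$. For (iii) I would use instead the second form and write $x\mapsto\varphi(y^{-1}\cdot x)$ as $\varphi\circ\tau_{y^{-1}}$; by left-invariance (c), $X_j(\varphi\circ\tau_{y^{-1}})=(X_j\varphi)\circ\tau_{y^{-1}}$, and since $\varphi\in C^\infty_c$ one may carry $X_j$ past the integral sign to get $X_j(\varphi\star\rho_\eps)=X_j\varphi\star\rho_\eps$. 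For the $L^p$-convergence in (i), setting $f_y(x):=f(y^{-1}\cdot x)$ and using $\int\rho_\eps\,d\mathcal L^n=1$, Minkowski's integral inequality gives
\[
\|f\star\rho_\eps-f\|_{L^p}\le\int\rho_\eps(y)\,\|f_y-f\|_{L^p}\,d\mathcal L^n(y)\le\sup_{\|y\|\le\eps}\|f_y-f\|_{L^p},
\]
and $\|f_y-f\|_{L^p}\to 0$ as $y\to e$ by the usual density argument (approximate $f$ in $L^p$ by compactly supported continuous functions, and use that $d_c$ induces the Euclidean topology together with left-invariance of $\mathcal L^n$, so that left-translation by points near $e$ perturbs such functions uniformly little).

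\textbf{Parts (ii), (iv), (v).} Part (ii) is immediate: if $(f\star\rho_\eps)(x)\ne 0$ then, by (\ref{convolmollifier}) and $\spt\rho_\eps\subset\delta_\eps(U_1)\subset B(0,\eps)$, there is $y\in\spt f$ with $x\cdot y^{-1}\in B(0,\eps)$, which gives the stated inclusion after taking closures. For (iv) I would apply Fubini to $\iint\rho_\eps(x\cdot y^{-1})f(y)g(x)\,d\mathcal L^n(y)\,d\mathcal L^n(x)$ — licit since $f\in L^\infty$, $g\in L^1$ and $\rho_\eps$ is bounded with compact support — and then use (d) in the form $\rho_\eps(x\cdot y^{-1})=\rho_\eps\big((x\cdot y^{-1})^{-1}\big)=\rho_\eps(y\cdot x^{-1})$ to recognize the inner $x$-integral as $(g\star\rho_\eps)(y)$. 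For (v), given a compact $K\subset\Omega$ I would first pick $\eps_0>0$ with $\{y^{-1}\cdot x:\|y\|\le\eps_0,\;x\in K\}\subset K'\subset\subset\Omega$; then $f$ is uniformly continuous on the compact set $K'$, and by joint continuity of $(x,y)\mapsto y^{-1}\cdot x$ together with compactness, $\sup_{x\in K}d_c(y^{-1}\cdot x,x)\to 0$ as $\|y\|\to 0$; hence from $(f\star\rho_\eps)(x)-f(x)=\int\rho_\eps(y)\big(f(y^{-1}\cdot x)-f(x)\big)\,d\mathcal L^n(y)$ one reads off $\sup_K|f\star\rho_\eps-f|\to 0$.

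\textbf{Main obstacle.} I do not expect any essential difficulty: the statement is the exact Carnot-group counterpart of the standard properties of Euclidean mollifiers, and one could in fact simply quote \cite[Proposition 1.20]{FS}. The only points that call for genuine care are the ones where non-commutativity intervenes: one must consistently select the right one of the two forms in (\ref{convolmollifier}) — left- and right-translations behave differently — and must keep track of the order of the factors in the support estimate; and in (v) one cannot reduce the modulus of continuity of $f$ along the curve $y^{-1}\cdot x$ to a function of $\|y\|$ alone via left-invariance, since $d_c(y^{-1}\cdot x,x)$ has no common left factor to cancel, so one must fall back on compactness and joint continuity of the group operation. Checking (d) and the bi-invariance of $\mathcal L^n$ are the two elementary structural verifications that make (iii) and (iv) run.
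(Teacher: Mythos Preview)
Your sketch is correct and is precisely the standard mollification argument transplanted to the group setting; you have identified the only non-Euclidean subtleties (bi-invariance of $\mathcal L^n$, left-invariance of the $X_j$, the symmetry $\rho_\eps(x^{-1})=\rho_\eps(x)$, and the lack of right-invariance of $d_c$ in part (v)) and handled them properly. The paper, however, does not prove this lemma at all: it simply refers the reader to \cite[Proposition~1.20]{FS}, which you yourself mention as an alternative, so there is no in-paper argument to compare against.
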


\subsection{Lorentz space} Let $\Omega\subset\G\equiv\Rn$ be an open set, let $Q$ denote the homogeneous dimension of $\G$ and let $A:=\,\leb{n}(\Omega)$. If $g:\Omega\to\bar\R$ is a measurable function, $\leb{n}$-a.e. finite, we define its {\it distribution function}  $\lambda_g: [0,\infty)\to [0,\infty]$ as
\[
\lambda_g(s):=\,\leb{n}\left(\left\{x\in\Omega:\,|g(x)|>\,s\right\}\right)
\]
and the {\it  nonincreasing rearrangement} $g^*:\,[0,\infty)\to [0,\infty]$ of $g$ as
\[
g^*(t):=\,\inf\left\{s\ge\,0:\,\lambda_g(s)\le\,t\right\}\,.
\]
It is well-known that $\lambda_g$ and $g^*$ are  nonincreasing and right-continuous.  Moreover $g^*$ is the nonincreasing function which is equimeasurable with $|g|$, namely 
\[\L n (\{x\in\Omega:|g(x)|>s\})=\L 1 (\{t\in\R^+:g^*(t)>s\})\qquad \text{for all }s>0. \]


The {\it Lorentz space} $L^{Q,1}(\Omega)$ is defined as the class of all measurable functions $g:\Omega\to\bar\R$, $\leb{n}$-a.e. finite,  for which the quasi-norm
\[
\|g\|_{L^{Q,1}(\Omega)}:=\,\int_0^A t^{\frac{1-Q}{Q}}\,g^*(t)\,dt
\]
is finite. Note that it is not a norm since the triangle inequality may fail. 
A systematic study of Lorentz spaces is carried out in \cite{Stein} whereas a simpler recent account can be found in  \cite{KKM}.\\
 Let us recall some properties of the Lorentz space $L^{Q,1}(\Omega)$ whose proof can be found in \cite{Bennett} and \cite{KKM}.
 \begin{proposition}\label{propLQ1}\phantom{1}
 \begin{itemize}
 \item[(i)] For any measurable function $g:\Omega\to\R$,
 \[ \|g\|_{L^{Q,1}(\Omega)}=\,Q\int_0^\infty\lambda_g(s)^{1/Q}\,ds\,.
 \]
  \item[(ii)] $L^{Q,1}(\Omega)\subset L^{Q}(\Omega)$; moreover, if $|\Omega|<\,\infty$, it holds that $L^p(\Omega)\subset L^{Q,1}(\Omega)$ for each $p>Q$.

 \item[(iii)] If $|g_1|\le\,|g_2|$ a.e. in $\Omega$, then
 \[
 \|g_1\|_{L^{Q,1}(\Omega)}\le\, \|g_2\|_{L^{Q,1}(\Omega)}\,.
 \]
 \end{itemize}
 \end{proposition}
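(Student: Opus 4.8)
The final statement to prove is Proposition~\ref{propLQ1}, which collects three properties of the Lorentz space $L^{Q,1}(\Omega)$. Since the excerpt explicitly says these are standard and refers to \cite{Bennett} and \cite{KKM} for proofs, I would present a short self-contained argument for each item rather than an original contribution.

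For item (i), the plan is to rewrite the defining integral $\int_0^A t^{(1-Q)/Q}\,g^*(t)\,dt$ using the layer-cake (Cavalieri) representation $g^*(t)=\int_0^{g^*(t)}ds=\int_0^\infty \mathbf 1_{\{s<g^*(t)\}}\,ds$, then apply Tonelli's theorem to exchange the order of integration. The inner integral in $t$ becomes $\int_0^A t^{(1-Q)/Q}\mathbf 1_{\{t<\lambda_g(s)\}}\,dt$, because $g^*(t)>s \iff t<\lambda_g(s)$ by right-continuity of $\lambda_g$ and the definition of the nonincreasing rearrangement (this equivalence is exactly the equimeasurability statement recalled just above the proposition). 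Computing $\int_0^{\lambda_g(s)} t^{(1-Q)/Q}\,dt = Q\,\lambda_g(s)^{1/Q}$ gives the claimed identity $\|g\|_{L^{Q,1}(\Omega)}=Q\int_0^\infty \lambda_g(s)^{1/Q}\,ds$. One should note $\lambda_g(s)\le A$ always, so the $t$-integral stays within $[0,A]$ and no truncation issue arises.

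For item (ii), for the inclusion $L^{Q,1}(\Omega)\subset L^Q(\Omega)$ I would recall that $\|g\|_{L^Q(\Omega)}^Q=\int_0^\infty g^*(t)^Q\,dt = Q\int_0^\infty s^{Q-1}\lambda_g(s)\,ds$ and compare; alternatively, and more in the spirit of the one-dimensional integral, use Hardy's inequality or simply the pointwise bound $g^*(t)\,t^{1/Q}\le \int_0^t g^*(\tau)\tau^{(1-Q)/Q}\,d\tau\cdot(\text{const})$ coming from monotonicity of $g^*$, i.e. $t^{1/Q}g^*(t)=Q\int_0^t \tau^{(1-Q)/Q}g^*(t)\,d\tau\le Q\int_0^t\tau^{(1-Q)/Q}g^*(\tau)\,d\tau\le Q\|g\|_{L^{Q,1}(\Omega)}$, whence $g^*(t)\le Q\|g\|_{L^{Q,1}}t^{-1/Q}$ and then, together with finiteness of $\int_0^A$, one checks $g\in L^Q$. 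For the second half, when $|\Omega|=A<\infty$ and $p>Q$, apply H\"older's inequality with exponents $p/(p-1)$-type splitting directly to $\int_0^A t^{(1-Q)/Q}g^*(t)\,dt$: write $t^{(1-Q)/Q}\in L^{p'}(0,A)$ (which holds precisely because $\frac{1-Q}{Q}\cdot p'>-1 \iff p>Q$) and $g^*\in L^p(0,A)$ (equimeasurability with $|g|\in L^p(\Omega)$), so the product is integrable and $\|g\|_{L^{Q,1}}\le \|t^{(1-Q)/Q}\|_{L^{p'}(0,A)}\|g\|_{L^p(\Omega)}$.

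Item (iii) is immediate: if $|g_1|\le|g_2|$ a.e.\ then $\lambda_{g_1}(s)\le\lambda_{g_2}(s)$ for every $s\ge 0$, hence $g_1^*\le g_2^*$ pointwise on $[0,\infty)$, and the monotonicity of the defining integral in the integrand gives $\|g_1\|_{L^{Q,1}(\Omega)}\le\|g_2\|_{L^{Q,1}(\Omega)}$. The only mild subtlety across the whole proposition is getting the Fubini/equimeasurability bookkeeping in (i) exactly right and tracking the condition $p>Q$ as the sharp threshold for $t^{(1-Q)/Q}$ to be $L^{p'}$ near $t=0$ in (ii); everything else is routine. I do not expect any genuine obstacle, since these are classical facts, but the cleanest writeup is the Tonelli computation for (i) with (ii) and (iii) deduced essentially from it and from elementary monotonicity.
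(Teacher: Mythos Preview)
The paper does not actually prove this proposition: it merely states it and refers to \cite{Bennett} and \cite{KKM}. Your self-contained argument is therefore more than the paper provides, and parts (i) and (iii) are handled correctly (the Tonelli/layer-cake computation for (i) is exactly the standard one, and (iii) is immediate from monotonicity of $\lambda_g$ and hence of $g^*$).

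There is, however, a small gap in your sketch of the inclusion $L^{Q,1}(\Omega)\subset L^Q(\Omega)$ in (ii). From monotonicity of $g^*$ you correctly obtain the weak-type bound
\[
t^{1/Q}g^*(t)\;\le\;\int_0^t \tau^{(1-Q)/Q}g^*(\tau)\,d\tau\;\le\;\|g\|_{L^{Q,1}(\Omega)}
\]
(your constant $Q$ is misplaced: $\int_0^t\tau^{(1-Q)/Q}\,d\tau=Qt^{1/Q}$, so the factor $Q$ goes on the other side). But the resulting pointwise estimate $g^*(t)\le C\,t^{-1/Q}$ only gives $g\in L^{Q,\infty}$, not $g\in L^Q$: the function $t^{-1/Q}$ is \emph{not} in $L^Q$ near $0$. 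What closes the argument is feeding this bound back in:
\[
\int_0^A g^*(t)^Q\,dt \;=\; \int_0^A g^*(t)^{Q-1}\,g^*(t)\,dt \;\le\; C^{\,Q-1}\|g\|_{L^{Q,1}}^{\,Q-1}\int_0^A t^{(1-Q)/Q}g^*(t)\,dt \;=\; C^{\,Q-1}\|g\|_{L^{Q,1}}^{\,Q},
\]
which gives $\|g\|_{L^Q}\le C\,\|g\|_{L^{Q,1}}$. Your H\"older argument for $L^p(\Omega)\subset L^{Q,1}(\Omega)$ when $p>Q$ and $|\Omega|<\infty$ is correct as written.
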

 Notice that, in view of point (ii) of the previous Proposition, the Lorentz space $L^{Q,1}(\Omega)$ is, locally, an intermediate space between $L^Q(\Omega)$ and $L^p(\Omega)$ for all $p>Q$.\\
Let us also recall the following  remarkable characterization of $L^{Q,1}(\Omega) $ in terms of Orlicz integrability conditions, contained in \cite[Corollary 2.4]{KKM}, which we will need later.
\begin{theorem}\label{charLQ1} Let $g$ be a  measurable function on $\Omega$. Then the following
properties are equivalent:
\begin{itemize}
\item [(i)] $g\in L^{Q,1}(\Omega) $.
\item [(ii)] There is a positive nonincreasing function $\vf\in L^{1/Q}((0,\infty))$ such that

\begin{equation}
\int_\Omega F_\vf(|g|)\,dx<\,\infty\,,
\end{equation}
where
\begin{equation}\label{Fvf}
F_\vf(s):=
\begin{cases}
s\,\vf(s)^{\frac{1}{Q}-1}&\text{ if }s>\,0\\
0&\text{ if }s=\,0.
\end{cases} 
\end{equation}
\end{itemize}
\end{theorem}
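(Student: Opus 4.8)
\textbf{Proof plan for Theorem \ref{charLQ1}.}
The plan is to prove the two implications separately, using the Cavalieri-type formula from Proposition \ref{propLQ1}(i), namely $\|g\|_{L^{Q,1}(\Omega)} = Q\int_0^\infty \lambda_g(s)^{1/Q}\,ds$, as the common bridge. The right-hand side only involves the distribution function $\lambda_g$, which will make it convenient to pass back and forth between the $L^{Q,1}$ quasi-norm and Orlicz-type integrals $\int_\Omega F_\vf(|g|)\,dx$, since the latter can also be rewritten in terms of $\lambda_g$ by the layer-cake formula: $\int_\Omega F_\vf(|g|)\,dx = \int_0^\infty \lambda_g(s)\,dF_\vf(s) = \int_0^\infty \lambda_g(s)\,\vf(s)^{\frac1Q - 1}\bigl(1 + s\,\tfrac{d}{ds}\log\vf(s)\bigr)\,ds$ (up to regularity of $\vf$, which one first reduces to by an approximation), or more robustly, when $\vf$ is nonincreasing, one splits $F_\vf$ appropriately so that its derivative is controlled.

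For the implication (ii) $\Rightarrow$ (i), I would start from a nonincreasing positive $\vf \in L^{1/Q}((0,\infty))$ with $\int_\Omega F_\vf(|g|)\,dx < \infty$ and aim to bound $\int_0^\infty \lambda_g(s)^{1/Q}\,ds$. The natural device is Hölder's inequality with exponents $Q$ and $Q/(Q-1)$: write $\lambda_g(s)^{1/Q} = \bigl(\lambda_g(s)\,\vf(s)^{\frac1Q - 1}\bigr)^{1/Q} \cdot \vf(s)^{\frac{Q-1}{Q}\cdot\frac1Q \cdot \text{(something)}}$, choosing the split of the $\vf$-weight so that the first factor integrates (after raising to the $Q$-th power) against the Orlicz integral — here one uses that $\lambda_g$ is nonincreasing to replace $\lambda_g(s)^{1/Q}$ by an integral of $\lambda_g$ against $dF_\vf$ type bounds — and the second factor raised to $Q/(Q-1)$ gives $\int_0^\infty \vf(s)^{1/Q}\,ds < \infty$, which is precisely the hypothesis $\vf \in L^{1/Q}$. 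The monotonicity of $\vf$ guarantees the elementary inequality $F_\vf(s) = s\,\vf(s)^{\frac1Q-1} \geq \int_0^s \vf(t)^{\frac1Q - 1}\,dt \cdot(\text{const})$ or a comparable pointwise estimate allowing the passage from values to averages; this is the step to execute carefully.

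For the converse (i) $\Rightarrow$ (ii), the task is to \emph{produce} the function $\vf$ from $g \in L^{Q,1}(\Omega)$. The standard trick is to build $\vf$ directly out of the nonincreasing rearrangement $g^*$ or out of $\lambda_g$; a clean choice is to set, up to normalization, $\vf$ to be a nonincreasing function comparable to $t \mapsto \bigl(\text{something like } g^*{}^{-1}\bigr)$ so that $F_\vf(g^*(t)) \approx g^*(t)\cdot t^{\frac1Q - 1}$ on the relevant range, making $\int_\Omega F_\vf(|g|)\,dx = \int_0^A F_\vf(g^*(t))\,dt \approx \int_0^A t^{\frac1Q-1}g^*(t)\,dt = \|g\|_{L^{Q,1}(\Omega)} < \infty$, while simultaneously $\int_0^\infty \vf(s)^{1/Q}\,ds$ is forced to be finite by the same quasi-norm being finite. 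One must add a harmless tail (e.g. a rapidly decaying nonincreasing function) to ensure $\vf$ is positive and defined on all of $(0,\infty)$, and verify that adding such a tail changes neither the finiteness of $\int_0^\infty \vf^{1/Q}$ nor that of the Orlicz integral, the latter because on the region where $g$ is small the integrand $F_\vf(|g|)$ stays integrable over the finite-measure set $\Omega$.

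The main obstacle I expect is the bookkeeping in matching the two one-dimensional integrals through the (possibly non-differentiable, merely nonincreasing) weight $\vf$: one cannot freely integrate $dF_\vf$ by parts, so the argument must be organized via monotone pointwise comparisons and Hölder rather than exact identities, and one has to be attentive to the role of $A = \leb{n}(\Omega) < \infty$ versus the genuinely infinite domain $(0,\infty)$ on which $\vf$ lives. Since the statement is quoted as \cite[Corollary 2.4]{KKM}, I would in practice cite it; but the self-contained argument sketched above — Cavalieri formula, Hölder split of the $\vf$-weight in one direction, explicit construction of $\vf$ from $g^*$ plus a decaying tail in the other — is the route I would take.
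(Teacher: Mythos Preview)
The paper does not supply a proof of this theorem: it is stated as a recalled result and attributed to \cite[Corollary~2.4]{KKM}. So there is nothing in the paper to compare your argument against line by line; you already note this at the end of your proposal and say you would, in practice, simply cite it --- that is exactly what the paper does.

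That said, your self-contained sketch is sound and matches the approach of the original KKM argument. For (ii)$\Rightarrow$(i), the H\"older split you propose works cleanly once you make one monotonicity step explicit: since $\vf$ is nonincreasing, $\vf^{\frac1Q-1}$ is nondecreasing, and Fubini gives
\[
\int_0^\infty \lambda_g(s)\,\vf(s)^{\frac1Q-1}\,ds
=\int_\Omega\int_0^{|g(x)|}\vf(s)^{\frac1Q-1}\,ds\,dx
\le \int_\Omega |g(x)|\,\vf(|g(x)|)^{\frac1Q-1}\,dx
=\int_\Omega F_\vf(|g|)\,dx,
\]
after which H\"older with exponents $Q$ and $Q/(Q-1)$ applied to $\lambda_g(s)^{1/Q}=\bigl(\lambda_g(s)\vf(s)^{\frac1Q-1}\bigr)^{1/Q}\bigl(\vf(s)^{1/Q}\bigr)^{(Q-1)/Q}$ finishes it. For (i)$\Rightarrow$(ii), the choice $\vf=\lambda_g$ (plus a strictly positive integrable tail to keep $\vf>0$) is the canonical one: then $\int_0^\infty\vf^{1/Q}=Q^{-1}\|g\|_{L^{Q,1}}<\infty$, and using $\lambda_g(g^*(t))\ge t$ together with equimeasurability yields $\int_\Omega F_\vf(|g|)\,dx=\int_0^A g^*(t)\,\lambda_g(g^*(t))^{\frac1Q-1}\,dt\le\|g\|_{L^{Q,1}}$. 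Your instinct about the obstacle --- that one should argue via monotone pointwise comparisons rather than differentiating $F_\vf$ --- is correct and is precisely how the KKM proof avoids regularity issues for $\vf$.
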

\subsection{Sobolev spaces on Carnot groups.}
In this section we introduce and study $L^p$ and Sobolev spaces  for maps between Carnot groups. 
\subsubsection{Sobolev functions from a Carnot group to $\mathbb{R}$} Sobolev spaces for scalar functions defined on a Carnot group $\G$ are constructed in the usual way by considering distributional derivatives along the basis $X_1,\dots, X_{m_1}$ of the horizontal layer of $\mathbb G$:
     a function $g\in L^1_{loc}(\Omega)$ is the \textit{weak (or distributional) derivative of $f$ along $X_i$} if 
    \[\int_\Omega g\varphi\, dx=-\int_\Omega fX_i\varphi\,dx\qquad \text{for all }\varphi\in C^\infty_c(\Omega).\]
As usual we will write $X_if$ also to denote the weak derivative of $f$ along $X_i$ and $\nabla_\mathbb G f=(X_1f,\dots,X_{m_1}f)$ for the \textit{weak horizontal gradient}.
\begin{definition}\label{horSob}
Let $\Omega$ be an open subset of $\G\equiv\Rn$ and $1\leq p\leq\infty$.
We define the (horizontal) Sobolev space $W^{1,p}_{\mathbb{G}}(\Omega)$ as the space of all the functions with finite norm
\begin{equation}\label{defNorm}
\Vert u\Vert_{W^{1,p}_{\mathbb{G}}} := \Vert u\Vert_{p} + \Vert \nabla_{\G}u\Vert_{p},
\end{equation}
where $\nabla_{\G}u$ denotes the horizontal distributional gradient of $u$.
\end{definition}
The following result is well-known, see e.g. \cite{FS}, \cite{FSSC2}, \cite{garnie}.
\begin{proposition} For any $1\leq p\leq
\infty$,  $(W^{1,p}_{\G}(\Omega), \Vert \cdot\Vert_{W^{1,p}_{\mathbb{G}}})$ is a Banach space, reflexive if $1<p<\infty$.
Moreover, $W^{1,2}_{\G}(\Omega)$ is a Hilbert space.
\end{proposition}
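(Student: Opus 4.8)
The statement to prove is that $(W^{1,p}_{\G}(\Omega), \|\cdot\|_{W^{1,p}_{\G}})$ is a Banach space, reflexive if $1<p<\infty$, and that $W^{1,2}_{\G}(\Omega)$ is a Hilbert space.

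The plan is to follow the standard Euclidean template of Meyers–Serrin type arguments, adapted to the horizontal setting, exploiting that the $X_i$ are left-invariant vector fields with polynomial coefficients so that distributional differentiation commutes with $L^p$-limits.

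\medskip

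\textbf{Completeness.} First I would show $W^{1,p}_{\G}(\Omega)$ is complete. Let $(u_k)$ be Cauchy in the $W^{1,p}_{\G}$-norm. By definition \eqref{defNorm} both $(u_k)$ and $(X_i u_k)$, $i=1,\dots,m_1$, are Cauchy in $L^p(\Omega)$; since $L^p(\Omega)$ is complete there exist $u, v_1,\dots,v_{m_1}\in L^p(\Omega)$ with $u_k\to u$ and $X_i u_k\to v_i$ in $L^p$. It remains to identify $v_i$ with the weak derivative $X_i u$. For any test function $\varphi\in C^\infty_c(\Omega)$ one has, by the very definition of weak derivative along $X_i$,
\[
\int_\Omega (X_i u_k)\,\varphi\,dx = -\int_\Omega u_k\, X_i^{*}\varphi\,dx,
\]
where $X_i^{*}$ is the formal adjoint of $X_i$; since $X_i$ is a smooth vector field on $\Rn$ (with polynomial coefficients), $X_i^{*}\varphi = -X_i\varphi - (\div X_i)\varphi$ is again in $C_c^\infty(\Omega)$, hence bounded with compact support, so both sides pass to the limit using $u_k\to u$, $X_i u_k\to v_i$ in $L^p_{\mathrm{loc}}$ and Hölder's inequality. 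This yields $\int_\Omega v_i\varphi = -\int_\Omega u\,X_i^{*}\varphi$ for all $\varphi$, i.e.\ $v_i = X_i u$ in the distributional sense, so $u\in W^{1,p}_{\G}(\Omega)$ and $u_k\to u$ in $W^{1,p}_{\G}$. I do not expect any serious obstacle here; the only point to be careful about is that $X_i^{*}$ maps $C_c^\infty$ into itself, which is immediate from the explicit form of the $X_i$ given by the adapted basis.

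\medskip

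\textbf{Reflexivity for $1<p<\infty$.} The cleanest route is to embed $W^{1,p}_{\G}(\Omega)$ isometrically as a closed subspace of the $(m_1+1)$-fold product $L^p(\Omega)^{m_1+1}$ via the linear map $T u = (u, X_1 u,\dots, X_{m_1}u)$, where the target carries the norm $\|(g_0,\dots,g_{m_1})\| = \|g_0\|_p + \sum_i \|g_i\|_p$. This $T$ is a linear isometry onto its image by \eqref{defNorm}, and its image is closed precisely because $W^{1,p}_{\G}(\Omega)$ is complete (shown above). For $1<p<\infty$, $L^p(\Omega)$ is reflexive, hence so is the finite product $L^p(\Omega)^{m_1+1}$ (with any of the usual equivalent product norms), and a closed subspace of a reflexive Banach space is reflexive. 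Therefore $W^{1,p}_{\G}(\Omega)$ is reflexive. One minor technical remark: the sum-norm on the product is equivalent to the $\ell^p$-norm on the product, and reflexivity is preserved under equivalent renorming, so the choice of product norm is harmless.

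\medskip

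\textbf{Hilbert structure for $p=2$.} For $p=2$ the norm in \eqref{defNorm} is equivalent to the norm induced by the inner product
\[
\langle u, w\rangle_{W^{1,2}_{\G}} := \int_\Omega u\,w\,dx + \sum_{i=1}^{m_1}\int_\Omega (X_i u)(X_i w)\,dx,
\]
since $\|u\|_2 + \|\nabla_{\G}u\|_2$ and $\big(\|u\|_2^2 + \|\nabla_{\G}u\|_2^2\big)^{1/2}$ are equivalent (the usual elementary inequality $\max\{a,b\}\le \sqrt{a^2+b^2}\le a+b$ for $a,b\ge 0$ applied with $a=\|u\|_2$, $b=\|\nabla_{\G}u\|_2$). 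This bilinear form is clearly symmetric, bilinear, and positive definite, and $W^{1,2}_{\G}(\Omega)$ is complete with respect to the equivalent norm by the first part. Hence $W^{1,2}_{\G}(\Omega)$, equipped with $\langle\cdot,\cdot\rangle_{W^{1,2}_{\G}}$, is a Hilbert space. The statement about the specific norm $\|\cdot\|_{W^{1,2}_{\G}}$ being Hilbertizable is exactly this equivalence.

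\medskip

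The main (very mild) obstacle is purely bookkeeping: verifying that the formal adjoint $X_i^{*}$ preserves $C_c^\infty(\Omega)$ and that one may pass $L^p$-limits through the defining integral identity for weak derivatives; everything else is the standard abstract functional-analytic packaging (closed subspace of a reflexive/Hilbert product space), which transfers verbatim from the Euclidean case because the only structural input used is that the $X_i$ are fixed smooth vector fields on $\Rn$.
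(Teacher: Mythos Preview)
Your argument is correct and is exactly the standard one; the paper itself does not supply a proof of this proposition but simply records it as well-known, citing \cite{FS}, \cite{FSSC2}, \cite{garnie}. One minor simplification available in this specific setting: the horizontal left-invariant fields $X_i$ on a Carnot group are divergence-free with respect to $\mathcal L^n$ (the bi-invariant Haar measure of a nilpotent Lie group), so in fact $X_i^{*}=-X_i$ with no zeroth-order term, which is why the paper's definition of weak derivative already reads $\int g\varphi=-\int f\,X_i\varphi$; your more general formulation with $X_i^{*}$ is of course still valid.
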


Another way to define the $W^{1,p}_{\mathbb{G}}$ with $1\leq p<\infty$
is to take the closure of
$C^\infty$ functions w.r.t the norm \eqref{defNorm}. As in the Euclidean
case, the two approaches are equivalent. This
was obtained independently in
\cite[Theorem 1.2]{FSSC2} and  \cite[Theorem A.2]{garnie}. However, the
method goes back to the classical result by Friedrichs \cite{Fr}. The result can be stated as follows. 

\begin{theorem}\label{density}  Let $1\leq p <\infty$. We have
$$
\mathbf C^{\infty}(\Omega)\cap W^{1,p}_{\G}(\Omega) \mbox{ is dense in }
  W^{1,p}_{\G}(\Omega).
$$
\end{theorem}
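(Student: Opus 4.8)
The plan is to prove the density theorem $\mathbf{C}^\infty(\Omega)\cap W^{1,p}_{\G}(\Omega)$ is dense in $W^{1,p}_{\G}(\Omega)$ for $1\le p<\infty$ by the classical Friedrichs-type argument: a partition-of-unity reduction to the compactly supported case, followed by mollification via the group convolution $\rho_\varepsilon$ from Lemma \ref{lemmacal}. First I would fix $u\in W^{1,p}_{\G}(\Omega)$ and $\eps>0$, and choose an exhaustion of $\Omega$ by open sets $\Omega_0\subset\subset\Omega_1\subset\subset\cdots$ with $\bigcup_k\Omega_k=\Omega$; set $\Omega_{-1}=\emptyset$ and let $\{\zeta_k\}_{k\ge0}$ be a smooth partition of unity subordinate to the cover $\{\Omega_{k+1}\setminus\overline{\Omega_{k-1}}\}$, so that $\sum_k\zeta_k\equiv1$ on $\Omega$ and each $\zeta_k\in C^\infty_c(\Omega_{k+1}\setminus\overline{\Omega_{k-1}})$. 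The key locality observation is that the product rule holds in the weak sense for horizontal derivatives, namely $X_i(\zeta_k u)=\zeta_k\,X_i u+(X_i\zeta_k)\,u$ as locally integrable functions (proved by testing against $C^\infty_c$ functions and integrating by parts, using that $X_i$ is a first-order vector field), so each $\zeta_k u\in W^{1,p}_{\G}(\Omega)$ with compact support in $\Omega_{k+1}\setminus\overline{\Omega_{k-1}}$.

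Next I would mollify each piece: for each $k$, pick $\varepsilon_k>0$ small enough that, by Lemma \ref{lemmacal}(ii), $\spt\big((\zeta_k u)\star\rho_{\varepsilon_k}\big)\subset\spt(\zeta_k u)\cdot B(0,\varepsilon_k)$ is still contained in a fixed compact subset of $\Omega$ lying inside $\Omega_{k+2}\setminus\overline{\Omega_{k-2}}$, and, by Lemma \ref{lemmacal}(i) applied to both $\zeta_k u$ and its weak horizontal gradient $\nabla_\G(\zeta_k u)\in L^p$, that
\[
\big\|(\zeta_k u)\star\rho_{\varepsilon_k}-\zeta_k u\big\|_{W^{1,p}_{\G}}<\frac{\eps}{2^{k+1}}.
\]
Here one needs the commutation $X_j\big((\zeta_k u)\star\rho_{\varepsilon_k}\big)=\big(X_j(\zeta_k u)\big)\star\rho_{\varepsilon_k}$; this is exactly the content of Lemma \ref{lemmacal}(iii) for $C^\infty_c$ functions, and it extends to the $W^{1,p}_{\G}$-function $\zeta_k u$ of compact support by a standard duality/integration-by-parts computation against test functions, using the invariance properties of $\rho$ (symmetry $\rho(x^{-1})=\rho(x)$) and Lemma \ref{lemmacal}(iv). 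Then set $v:=\sum_{k\ge0}(\zeta_k u)\star\rho_{\varepsilon_k}$. By the support condition, this sum is locally finite, so $v\in C^\infty(\Omega)$; and
\[
\|v-u\|_{W^{1,p}_{\G}(\Omega)}=\Big\|\sum_{k\ge0}\big((\zeta_k u)\star\rho_{\varepsilon_k}-\zeta_k u\big)\Big\|_{W^{1,p}_{\G}(\Omega)}\le\sum_{k\ge0}\frac{\eps}{2^{k+1}}=\eps,
\]
where the first equality uses $u=\sum_k\zeta_k u$ (locally finite sum). This proves $v\in C^\infty(\Omega)\cap W^{1,p}_{\G}(\Omega)$ approximates $u$, and letting $\eps\to0$ gives density.

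The main obstacle I expect is establishing cleanly the two "weak calculus" facts used above in the group setting: (a) the Leibniz rule $X_i(\zeta u)=\zeta X_i u+(X_i\zeta)u$ for $\zeta\in C^\infty_c$ and $u\in W^{1,p}_{\G}$, and (b) the commutation of $X_j$ with the left-convolution $\star\rho_\varepsilon$ when applied to a compactly supported $W^{1,p}_{\G}$ function rather than a smooth one. Both reduce to integration by parts against test functions, but care is needed because the group convolution $(f\star\rho_\varepsilon)(x)=\int\rho_\varepsilon(x\cdot y^{-1})f(y)\,d\mathcal{L}^n(y)$ is built from \emph{left} translations while the horizontal vector fields $X_j$ are \emph{left}-invariant, so $X_j$ acts on the first argument and passes through the integral against $\rho_\varepsilon$ correctly only because of left-invariance of $X_j$ together with the bi-invariance of Haar measure $\mathcal{L}^n$; one must verify the change-of-variables bookkeeping carefully. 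Everything else — the partition of unity, the local-finiteness of the sum defining $v$, and the telescoping estimate — is routine once these two lemmas are in hand; indeed, as the text notes, this is the classical Friedrichs scheme \cite{Fr} adapted verbatim, and the references \cite{FSSC2, garnie} carry out the details.
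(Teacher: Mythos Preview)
Your proposal is correct and follows precisely the Friedrichs--Meyers--Serrin scheme that the paper invokes (without giving a proof) via the citations \cite{Fr,FSSC2,garnie}: partition of unity plus group mollification, with the commutation $X_j\big((\zeta_k u)\star\rho_{\varepsilon_k}\big)=\big(X_j(\zeta_k u)\big)\star\rho_{\varepsilon_k}$ justified for compactly supported $W^{1,p}_{\G}$ functions exactly as you outline, using Lemma~\ref{lemmacal}(iii)--(iv) and duality. There is nothing to add.
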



   For functions in $W^{1,p}_\G(\Omega)$, the difference quotients are uniformly bounded by the $L^p$ norm of the weak gradient. This is expressed by the following Lemma, whose Euclidean counterpart is a classical result (see for instance \cite{Giusti2}).
   \begin{lemma}
    Let $f\in W^{1,p}_\G(\Omega)$. Then, for every $i=1,\dots, m_1$ and for every $\Omega'\subset\subset \Omega$ it holds 
    \[\|D_{i}^hf\|_{L^p(\Omega')}\leq \|X_i f \|_{L^p(\Omega)}\]for every $0<|h|<\dist(\Omega',\partial \Omega)$.
\end{lemma}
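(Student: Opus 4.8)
The plan is to reduce the statement to its Euclidean analogue by a standard mollification argument adapted to the Carnot group setting, exploiting the commutation property between horizontal derivatives and convolution.

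\textbf{Setup.} Let $f\in W^{1,p}_\G(\Omega)$, fix $\Omega'\subset\subset\Omega$ and $0<|h|<\dist(\Omega',\partial\Omega)$. Choose an intermediate open set $\Omega''$ with $\Omega'\subset\subset\Omega''\subset\subset\Omega$ and such that for some small $\eps_0>0$ one still has $\dist(\Omega'',\partial\Omega)>|h|$ and $\dist(\Omega',\partial\Omega'')>\eps_0$; this is possible by shrinking $\Omega''$ appropriately. By Theorem \ref{density} pick a sequence $f_k\in\mathbf C^\infty(\Omega)\cap W^{1,p}_\G(\Omega)$ with $f_k\to f$ and $\nabla_\G f_k\to\nabla_\G f$ in $L^p$ on a neighbourhood of $\Omega''$. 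Alternatively, and more directly, use the mollifiers $\rho_\eps$ of Lemma \ref{lemmacal} and set $f_\eps:=f\star\rho_\eps$, which by parts (i) and (iii) of that lemma is smooth on $\Omega''$ (for $\eps$ small) and satisfies $X_i(f_\eps)=(X_if)\star\rho_\eps$ there, hence $f_\eps\to f$ and $X_if_\eps\to X_if$ in $L^p(\Omega'')$.

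\textbf{Smooth case.} For a $\mathbf C^\infty$ function $g$ defined near $\Omega''$ one writes, using the definition of $D^h_ig$ and the fact that $s\mapsto g(\exp(sX_i)(x))$ is smooth,
\[
D^h_ig(x)=\frac{1}{h}\int_0^h \frac{d}{ds}\,g(\exp(sX_i)(x))\,ds=\frac1h\int_0^h (X_ig)(\exp(sX_i)(x))\,ds.
\]
Then by Jensen's (or Minkowski's integral) inequality in $L^p(\Omega')$,
\[
\|D^h_ig\|_{L^p(\Omega')}\le \frac1h\int_0^h \Big(\int_{\Omega'}|(X_ig)(\exp(sX_i)(x))|^p\,dx\Big)^{1/p}ds.
\]
Since the flow $x\mapsto\exp(sX_i)(x)$ is a left translation by $\delta_s e_i=(s,0,\dots,0)$, it is measure-preserving (the Haar measure $\mathcal L^n$ is left-invariant), and for $0\le s\le|h|<\dist(\Omega',\partial\Omega'')$ it maps $\Omega'$ into $\Omega''$. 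Hence each inner integral is bounded by $\|X_ig\|_{L^p(\Omega'')}$, giving $\|D^h_ig\|_{L^p(\Omega')}\le\|X_ig\|_{L^p(\Omega'')}$.

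\textbf{Passage to the limit.} Apply the previous estimate to $g=f_\eps$: $\|D^h_if_\eps\|_{L^p(\Omega')}\le\|X_if_\eps\|_{L^p(\Omega'')}=\|(X_if)\star\rho_\eps\|_{L^p(\Omega'')}\le\|X_if\|_{L^p(\Omega)}$, where the last inequality uses that convolution with a probability kernel does not increase the $L^p$ norm (standard Young-type estimate, valid here since $\rho_\eps\ge0$ and $\int\rho_\eps=1$). On the other hand $f_\eps\to f$ in $L^p(\Omega'')$ forces $D^h_if_\eps\to D^h_if$ in $L^p(\Omega')$, because the difference-quotient operator is bounded on $L^p$ for fixed $h$ (again by measure-preservation of the translation by $\delta_h e_i$). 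Therefore $\|D^h_if\|_{L^p(\Omega')}=\lim_\eps\|D^h_if_\eps\|_{L^p(\Omega')}\le\|X_if\|_{L^p(\Omega)}$, which is the claim.

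\textbf{Main obstacle.} The only genuinely non-Euclidean point is the identification of the integral-curve flow of $X_i$ with a left translation and the resulting measure-invariance; this is where one must invoke that $X_i$ is left-invariant with $X_i(0)=e_i$, so $\exp(sX_i)(x)=x\cdot(s e_i)=\tau_x(\delta_s e_i)$, and that $\mathcal L^n$ is the (bi-invariant, since $\G$ is nilpotent hence unimodular) Haar measure. Everything else is a routine transcription of the classical Euclidean proof, with Lemma \ref{lemmacal}(i),(iii) supplying the approximation and the commutation $X_i(f\star\rho_\eps)=(X_if)\star\rho_\eps$.
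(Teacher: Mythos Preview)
Your proof is correct and follows essentially the same line as the paper: reduce to smooth functions by density (the paper invokes Theorem~\ref{density} in one sentence, while you spell out the mollification and the limit passage via an intermediate domain), then for smooth $g$ write $D^h_ig$ as an average of $X_ig$ along the integral curve of $X_i$ and apply Jensen/Minkowski together with measure-preservation of the flow. One terminological slip worth fixing: the map $x\mapsto\exp(sX_i)(x)=x\cdot(se_i)$ is a \emph{right} translation, not a left one; your later remark that $\G$ is nilpotent hence unimodular is precisely what justifies the measure-invariance, so the argument stands, but the sentence in the smooth-case paragraph appealing to left-invariance should be amended accordingly (the paper handles this by writing the change of variables as $\Omega'\cdot\delta_{hs}(e_i)\subset\Omega$ without naming it).
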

\begin{proof}
 By Theorem \ref{density}, it suffices to prove the statement for $f\in C^\infty(\Omega)\cap W^{1,p}_\G(\Omega)$.  
    Fix $i=1,\dots, m_1$, $\Omega'\subset\subset\Omega$ and $0<|h|<\dist(\Omega',\partial \Omega).$
    Consider $x\in \Omega'$ and define \[\gamma_x(s):=\exp(hsX_i)(x).\]
    Then $\gamma_x$ is a horizontal curve satisfying $\gamma_x(0)=x,\, \gamma_x(1)=\exp(hX_i)(x)$ and, by construction, $\dot\gamma_x(s)=hX_i(\gamma_x(s))$. Then the result follows by a direct computation:
     \begin{align*}\|D_{i}^hf\|^p_{L^p(\Omega')}&=\int_{\Omega'}\left|\frac{f(\exp(hX_i)(x))-f(x)}{h}\right|^p dx=\int_{\Omega'}\left|\frac{f(\gamma_x(1))-f(\gamma_x(0))}{h}\right|^p\,dx\\ &=\frac{1}{|h|^p}\int_{\Omega'} \left|\int_0^1\frac{d}{ds}(f\circ \gamma_x)(s)\,ds\right|^pdx=\frac{1}{|h|^p}\int_{\Omega'} \left|\int_0^1\big\langle \nabla_\G f(\gamma_x(s)),\dot\gamma_x(s)\big\rangle_{\gamma_x(s)}\,ds\right|^pdx\\
     &= \frac{1}{|h|^p}\int_{\Omega'} \left|\int_0^1hX_if(\gamma_x(s))\,ds\right|^pdx\leq\int_{\Omega'} \int_0^1\left|X_if(\gamma_x(s))\right|^pds\,dx\\ &=\int_0^1\int_{\Omega'} \left|X_if(\gamma_x(s))\right|^pdx\,ds =\int_0^1\int_{\Omega'\cdot \delta_{hs}(e_i)} \left|X_if(z)\right|^pdz\,ds\\
     &\leq \int_0^1\int_{\Omega} \left|X_if(z)\right|^pdz\,ds =\|X_if\|^p_{L^p(\Omega)}.\qedhere
     \end{align*}
\end{proof}
\begin{proposition}\label{Pweakdiff}    Let $1\leq p<+\infty$. Let $f:\,\Omega\subset\G\to\R$ and suppose that
\begin{itemize}
\item[(i)] $f$ is P-differentiable a.e. in $\Omega$;
\item[(ii)] $f\in W^{1,p}_{\G,\rm loc}(\Omega)$.
\end{itemize}
Then
\[
X_if(x)=\lim_{t\to 0}\frac{f\left(\exp(tX_i)(x)\right)-f(x)}{t}\text{ for a.e. }x\in\Omega\,
\]
for each $i=1,\dots,m_1$, where $X_if$ denotes the distributional derivative of $f$ along the vector field $X_i$.
\end{proposition}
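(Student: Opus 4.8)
\emph{Overview.} The idea is that the $\mathcal L^n$-a.e.\ pointwise limit of the horizontal difference quotients $D_i^t f$, which exists by hypothesis (i), must coincide $\mathcal L^n$-a.e.\ with the distributional derivative $X_i f$, which exists by hypothesis (ii). The link between the two is the standard fact, valid for every $1\le p<\infty$, that difference quotients of Sobolev functions converge to the weak derivative in $L^p_{\mathrm{loc}}$; once this is available, the two limits are forced to agree along a suitable sequence $t=h_k\to 0$, hence everywhere the pointwise limit exists.

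\emph{Step 1 ($L^p_{\mathrm{loc}}$ convergence of difference quotients).} I claim that $D_i^h f\to X_i f$ in $L^p_{\mathrm{loc}}(\Omega)$ as $h\to 0$, where $X_i f$ denotes the distributional derivative. Fix a compact $K\subset\Omega$ and a bounded open set $\Omega''$ with $K\subset\subset\Omega''\subset\subset\Omega$, so that $f\in W^{1,p}_\G(\Omega'')$. Given $\eps>0$, use Theorem~\ref{density} to pick $g\in C^\infty(\Omega'')\cap W^{1,p}_\G(\Omega'')$ with $\|f-g\|_{W^{1,p}_\G(\Omega'')}<\eps$. Writing $D_i^h f-X_i f=D_i^h(f-g)+(D_i^h g-X_i g)+(X_i g-X_i f)$: the last term has $L^p(K)$-norm $<\eps$; the first term is controlled by the difference-quotient bound of the preceding Lemma, $\|D_i^h(f-g)\|_{L^p(K)}\le\|X_i(f-g)\|_{L^p(\Omega'')}<\eps$ for $|h|$ small; and the middle term tends to $0$ since for the smooth $g$ a first-order Taylor expansion of $t\mapsto g(\exp(tX_i)(x))=g(x\cdot(te_i))$, uniform for $x$ in a compact neighbourhood of $K$, gives $D_i^h g\to X_i g$ uniformly on $K$. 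Hence $\|D_i^h f-X_i f\|_{L^p(K)}<3\eps$ for $|h|$ small, proving the claim.

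\emph{Steps 2--3 (a.e.\ convergence and identification).} Exhausting $\Omega$ by compacts and combining Step 1 with a diagonal extraction, we obtain a sequence $h_k\to 0$ with $D_i^{h_k}f(x)\to X_i f(x)$ for $\mathcal L^n$-a.e.\ $x\in\Omega$. On the other hand, by hypothesis (i), at $\mathcal L^n$-a.e.\ $x\in\Omega$ the function $f$ is P-differentiable, and, as recalled in Section~\ref{CG}, this implies that $f$ is differentiable along $X_i$ at $x$, i.e.\ that $\lim_{t\to 0}D_i^t f(x)$ exists (and equals $d_P f(x)(e_i)$). At any $x$ lying in both full-measure sets, the limit $\lim_{t\to 0}D_i^t f(x)$ exists, hence equals its limit along the particular sequence $h_k\to 0$, namely $X_i f(x)$. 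Therefore $\lim_{t\to 0}D_i^t f(x)=X_i f(x)$ for $\mathcal L^n$-a.e.\ $x\in\Omega$, which is exactly the assertion.

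\emph{Main obstacle.} Essentially all the content sits in Step 1; Steps 2--3 are bookkeeping with null sets and subsequences. Step 1 itself is routine once one reduces to smooth maps via the density theorem and invokes the uniform bound on horizontal difference quotients, exactly as in the Euclidean case, so no serious difficulty arises here — the point of the proposition is precisely to pair the Sobolev side (the distributional $X_i f$) with the pointwise side (the $X_i$-component $d_P f(x)(e_i)$ of the Pansu differential).
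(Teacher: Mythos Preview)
Your proof is correct and follows essentially the same approach as the paper: both establish $D_i^h f\to X_i f$ in $L^p_{\mathrm{loc}}$ via the density theorem and the difference-quotient lemma, and then identify this limit with the pointwise limit guaranteed by P-differentiability. Your Steps~2--3 are in fact slightly more explicit than the paper's, which condenses the passage from $L^p_{\mathrm{loc}}$ convergence to a.e.\ agreement into the single line ``Hence the conclusion follows''; your subsequence argument spells out precisely the bridge the paper leaves implicit.
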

\begin{proof}

Let us first show that, since $f\in W^{1,p}_{\G,\rm loc}(\Omega)$, then 
\[D_i^hf\to X_if \quad\text{in } L^p_{\rm loc}(\Omega).\]
Let $\Omega'\subset\subset \Omega$ and $u\in C^{\infty}(\Omega)\cap W^{1,p}_\G(\Omega)$. 
We write
\[D_i^h f-X_if=D_i^h(f-u)+D_i^hu-X_iu+X_i(u-f)\]
and using the previous lemma for $h$ sufficiently small
\[\|D_i^hf-X_if\|_{L^p (\Omega')}\leq \|D_i^hu-X_iu\|_{L^p(\Omega')}+2\|X_i(u-f)\|_{{L^p}(\Omega)}.\]
Now the claim follows by Theorem \ref{density}, since for $u\in C^{\infty}(\Omega)$ it is well known that $D_i^hu\to X_iu$ (which coincides with the classical directional derivative) \textit{uniformly} on compact sets. 

On the other hand, by assumption (i), $D_i^hf$ converges pointwise almost everywhere to the classical derivative of $f$ along the vector field $X_i$. Hence the conclusion follows.
\end{proof}
We conclude this section by recalling a well-known estimate on the Riesz potential that will be helpful later on. The proof easily follows using the Poincar\'e inequality in Carnot groups \cite {jerison}, \cite {FLW representation} and \cite {FW singapore} and we omit it.

\begin{theorem}\label{representation} Let $U=U(x,r)$ and let $f:\G\to \R$ be a
continuously differentiable function. Then there exists 
$C>0$, independent of $U$ and $f$, such that
\begin{equation}\label{riesz 1 formula}
|f(x)-f_{U}|\le C\int_{U}\frac{|\nabla_\G f(y)|}{\;\;\;\;d(x,y)^{Q-1}}\;dy 
\end{equation}
for all $x\in U$, where $f_{U}:=\ave_U f\,dx $. 
 Moreover, if $f\in W^{1,1}_{\G,\rm loc}(U)$, then \eqref{riesz 1 formula} holds for a.e. $x\in U$ and $\nabla_\G f(y)$ denotes the weak horizontal gradient of $f$ at $y$.
\end{theorem}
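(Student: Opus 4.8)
The proof rests on two structural facts about $(\G,d_c,\L n)$, both with constants depending only on $\G$: it is Ahlfors $Q$-regular, so doubling with constant $2^Q$ (since $\L n(U(z,\rho))=\rho^Q\L n(U_1)$); and it supports the $(1,1)$-Poincar\'e inequality on CC balls, i.e.\ there is $C_P=C_P(\G)$ with
\[
\ave_{U(z,\rho)}\bigl|g-g_{U(z,\rho)}\bigr|\,dy\ \le\ C_P\,\rho\,\ave_{U(z,\rho)}|\nabla_\G g|\,dy
\]
for every CC ball and every $g\in C^1$ (Jerison's theorem, \cite{jerison}; if one only has the version with a fixed dilation $\lambda U(z,\rho)$ on the right, nothing changes below). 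That these constants are uniform over all CC balls — because every such ball is a left translate of a dilate of $U_1$ — is precisely why the resulting $C$ in \eqref{riesz 1 formula} does not depend on $U$ or $f$. We may assume $Q\ge 2$, the case $Q=1$ (that is $\G=\R$) being elementary.

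First I would fix the centre $x$ of $U=U(x,r)$ and set $U_j:=U(x,2^{-j}r)$, so $U_0=U$, $U_{j+1}\subset U_j$ and $|U_j|/|U_{j+1}|=2^Q$. Since $f$ is continuous, $f_{U_j}\to f(x)$, hence by telescoping
\[
|f(x)-f_U|\ \le\ \sum_{j\ge0}|f_{U_{j+1}}-f_{U_j}|\ \le\ 2^Q\sum_{j\ge0}\ave_{U_j}|f-f_{U_j}|\,dy\ \le\ C\sum_{j\ge0}2^{-j}r\,\ave_{U_j}|\nabla_\G f|\,dy ,
\]
the last inequality being Poincar\'e. Since $\ave_{U_j}|\nabla_\G f|\,dy=(2^{-j}r)^{-Q}\L n(U_1)^{-1}\int_{U_j}|\nabla_\G f|$, the right-hand side is $C'\sum_j(2^{-j}r)^{1-Q}\int_{U_j}|\nabla_\G f|$. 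Decomposing $U_j=\bigsqcup_{k\ge j}A_k$ up to the null set $\{x\}$, where $A_k:=U_k\setminus U_{k+1}$, and swapping the order of the two nonnegative sums, this equals $C'\sum_{k\ge0}\bigl(\sum_{j=0}^k(2^{-j}r)^{1-Q}\bigr)\int_{A_k}|\nabla_\G f|$. Because $1-Q<0$, the inner geometric sum is comparable to its last term $(2^{-k}r)^{1-Q}$; and on $A_k$ one has $d(x,y)<2^{-k}r$, so $(2^{-k}r)^{1-Q}\le d(x,y)^{1-Q}$. Hence the whole quantity is bounded by $C''\sum_{k\ge0}\int_{A_k}\frac{|\nabla_\G f(y)|}{d(x,y)^{Q-1}}\,dy=C''\int_U\frac{|\nabla_\G f(y)|}{d(x,y)^{Q-1}}\,dy$, which is \eqref{riesz 1 formula}.

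For $f\in W^{1,1}_{\G,\mathrm{loc}}(U)$ (with $f\in L^1(U)$, so that $f_U$ is defined) the same chain works: the Poincar\'e inequality passes to Sobolev functions on CC balls via the density Theorem~\ref{density}, and the Lebesgue differentiation theorem on the doubling space $(\G,d_c,\L n)$ gives $f_{U_j}\to f(x)$ for $\L n$-a.e.\ $x$, so the telescoping identity — and thus the whole estimate — holds for a.e.\ $x\in U$. (Alternatively, one applies the $C^1$ case to the mollifications $f\star\rho_\eps$, using $X_i(f\star\rho_\eps)=(X_if)\star\rho_\eps$ and $f\star\rho_\eps\to f$, $\nabla_\G(f\star\rho_\eps)\to\nabla_\G f$ in $L^1_{\mathrm{loc}}$ from Lemma~\ref{lemmacal}, exhausting $U$ by compactly contained balls and passing to the limit along a subsequence for which both $f\star\rho_\eps\to f$ a.e.\ and the Riesz potential of $|\nabla_\G(f\star\rho_\eps)-\nabla_\G f|$ tends to $0$ a.e., the constant being uniform in $\eps$.) The only genuinely non-elementary input is the $(1,1)$-Poincar\'e inequality on CC balls; the rest is bookkeeping, the two delicate points being the uniformity of the doubling and Poincar\'e constants over all CC balls (needed for $C$ to be independent of $U$ and $f$) and, in the dyadic resummation, the correct use of $1-Q<0$ both in summing the geometric series and in the pointwise bound $(2^{-k}r)^{1-Q}\le d(x,y)^{1-Q}$ on the annulus $A_k$.
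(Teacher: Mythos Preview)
The paper does not prove this theorem; it simply remarks that it follows from the Poincar\'e inequality \cite{jerison} and the representation formulas in \cite{FLW representation}, \cite{FW singapore}. Your telescoping argument is the standard route to such estimates and is correct for what it proves, but there is a gap: you only establish \eqref{riesz 1 formula} at the \emph{centre} of the ball. You take $U_j=U(x,2^{-j}r)$ concentric with $U$, and it is precisely this concentricity that gives $U_j\subset U$ and lets the final integral live on $U$. But the statement asks for the bound at every point of $U$, and this is exactly how it is used in Theorem~\ref{SobmapRR}, where one picks an \emph{arbitrary} $z\in U\setminus N$ and invokes $|f(z)-f_U|\le C\int_U |\nabla_\G f|\,d(z,\cdot)^{1-Q}$ to control $\mathrm{osc}_{U\setminus N}f$. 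If $z$ is near $\partial U$, the dyadic balls $U(z,2^{-j}r)$ are not contained in $U$ for small $j$, so your resummation no longer yields the potential over $U$.

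The missing step is a chain argument: a CC ball is a John domain with John centre its own centre $p$, so for any $z\in U$ one can follow a geodesic from $z$ to $p$ and build a chain of balls $B_k\subset U$ along it, with radii comparable to their distance to $\partial U$ and consecutive balls overlapping in a fixed proportion. Telescoping $|f(z)-f_U|$ along this chain, applying the Poincar\'e inequality on each $B_k$, and resumming exactly as you did (the key being that each $y\in U$ lies in boundedly many $B_k$, on which $d(z,y)$ is comparable to the radius of $B_k$) gives the Riesz potential over $\bigcup_k B_k\subset U$. This is precisely the machinery of \cite{FLW representation}, \cite{FW singapore} that the paper cites; your argument is the degenerate case of a single nested chain at the centre, and it needs this additional chaining to cover the full statement used later in the paper.
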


\subsubsection{Sobolev functions between Carnot groups.} We are now going to introduce the notions of $L^p$ functions and Sobolev functions with values in a Carnot group. These concepts actually work for mappings between general metric spaces and can be introduced in several different ways (see Section \ref{DiffMS}). A possible approach relies on \textit{upper gradients} (this is for instance the case discussed in \cite{LZ}), while here we use the definition by \textit{sections of mappings}. These notions go back to Ambrosio \cite{ambrosio2} and Reshetnyak \cite{Resh} for functions defined on open sets of $\Rn$ or a Riemannian manifold and have been extended by Vodop'yanov \cite{Vo99} to the setting of Carnot groups. A recent account containing  different approaches to this argument is due to Kleiner,  M\"uller and Xie \cite{KMX, KMX2}.
\begin{definition}\label{SobspacesvaluedCarnot} Let $\G_1=(\Rn,\cdot,d_1)$ and $\G_2=(\Rm,\star,d_2)$ be two Carnot  groups and let  $\Omega\subset \G_1$ be an open set.
\begin{itemize}
\item[(i)] We say that a map  $f:\,\Omega\to\G_2$ belongs to $L^p(\Omega,\G_2)$, and we will write $f\in L^{p}(\Omega,\G_2)$, if $f:\,\Omega\subset\Rn\to\Rm$ is measurable and the map $\Rn\ni x\mapsto d_2(0,f(x))$ is in $L^p(\Omega)$.
\item[(ii)]  We say that a map  $f:\,\Omega\to\G_2$ belongs to the Sobolev space $W^{1,p}(\Omega,\G_2)$, and we will write $f\in W^{1,p}(\Omega,\G_2)$, if 
\subitem ($\bullet$) $f\in L^{p}(\Omega,\G_2)$;
 \subitem ($\bullet$) for all $z\in\G_2$ the functions $u_z:\,\Omega\to\R$
\begin{equation}\label{defuz}
u_z(x):=\,d_2(z,f(x))\text{ if }x\in\Omega\,,
\end{equation}
are in $W^{1,p}_{\G_1,\rm loc}(\Omega)$;
\subitem ($\bullet$) there exists a nonnegative function $g\in L^p(\Omega)$ (independent of $z$) such that
\begin{equation}\label{estimnablauz}
|\nabla_{\G_1}u_z|\le\,g\text{ a.e. in $\Omega$, for each $z\in \G_2$}\,.
\end{equation}
\end{itemize}
In the following we will refer to a function $u_z$ as a section of $f$.
\end{definition}
As usual, we will denote by $W^{1,p}_{\rm loc}(\Omega,\G_2)$ the set of functions $f:\Omega\to\G_2$ such that $f\in W^{1,p}(\Omega',\G_2)$ for each $\Omega'\subset\subset\Omega$. 
\begin{remark}\label{rmk} The stability condition requested in \eqref{estimnablauz} with respect to the family of $1$-Lipschitz maps $\vf:\,\G_2\to\R$, $\vf(y):=\,d_2(z,y)$, can be extended to all Lipschitz maps between $\G_2$ and $\R$. Indeed the following characterization holds (see  \cite[Proposition 1]{Vo99} and \cite[Proposition 2.31]{KMX}, see also Lemma \ref{CRlipW}):  $f\in W^{1,p}(\Omega,\G_2)$ if and only if there exists a nonnegative function $g\in L^p(\Omega)$ such that,  for each Lipschitz function $\vf:\,\G_2\to\R$, the composition map $\vf\circ f:\Omega\to\R$ is in $W^{1,p}_{\G_1, \rm loc}(\Omega)$ and $|\nabla_{\G_1} (\vf\circ f)|\le\, {\rm Lip}(\vf)\,g$ a.e. in $\Omega$.
\end{remark}
Notice that, exploiting this remark, it is easy to see that Definition \ref{SobspacesvaluedCarnot} does not depend on the specific distance $d_2$ on $\G_2$ which appears in \eqref{defuz}. In fact, suppose $\tilde d_2$ is another invariant distance on $\G_2$. Then $d_2$ and $\tilde d_2$ determine the same Sobolev spaces. Indeed, since $d_2$ and $\tilde d_2$ are both invariant, they are equivalent as distances (see \cite[Corollary 5.1.5]{BLU}), namely there exists a constant $L>1$ such that
\begin{equation}\label{eq}L^{-1}d_2(x,y)\leq \tilde d_2(x,y)\leq L d_2(x,y)\end{equation}
for every $x,y\in\G_2$. 
Suppose now $f\in W^{1,p}(\Omega,\G_2)$ according to the distance $d_2$. Then
\begin{itemize}
\item The map $\Rn\ni x\mapsto \tilde d_2(0,f(x))$ is in $L^p(\Omega)$ by \eqref{eq};
\item For every $z\in\G_2$, the map $\varphi_z:(\G_2, d_2)\to(\R,|\cdot|)$ defined by $\varphi_z(y):=\tilde d_2(z,y)$ is $L$-Lipschitz since
\[|\varphi_z(x)-\varphi_z(y)|=|\tilde d_2(z,x) -\tilde d_2(z,y)|\leq \tilde d_2(x,y)\leq Ld_2(x,y).\]
Hence we can apply Remark \ref{rmk} and deduce that 
the composition $\tilde d_2(z,f(\cdot))=\varphi_z\circ f\in W^{1,p}_{\G_1,\rm loc}(\Omega)$;
\item Again by the previous considerations and Remark \ref{rmk}, we conclude that $|\nabla_{\G_1}(\varphi_z\circ f)|\leq Lg$ a.e. in $\Omega$ for each $z\in\G_2$.
\end{itemize}Thus $f\in W^{1,p}(\Omega,\G_2)$ with respect to $\tilde d_2$: this shows that Definition \ref{SobspacesvaluedCarnot} does not depend on the specific invariant distance on $\G_2$.\\\\
P-differentiability and  Morrey embedding theorem  for real-valued Sobolev spaces on  Carnot groups when $p>Q$ (see, for instance, \cite{MSC} and \cite[Theorem 1.11]{garnie}) can be extended to Sobolev maps between Carnot groups. Here $Q$ always denotes the homogeneous dimension of $\G_1$.
\begin{theorem}[{\rm P-differentiability with $p>Q$, \cite[Corollary 2]{Vo03}}] \label{pdifferent}Let $f\in W^{1,p}(\Omega,\G_2)$ with $p>\,Q$. Then $f$ is P-differentiable a.e. in $\Omega$.
\end{theorem}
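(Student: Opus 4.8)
The statement to prove is \refth{pdifferent}: if $f\in W^{1,p}(\Omega,\G_2)$ with $p>Q$, then $f$ is P-differentiable a.e. in $\Omega$. Although the paper cites this as a result of Vodop'yanov, let me sketch how I would prove it from the tools already assembled in the excerpt.

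The plan is to reduce the vector-valued statement to the scalar Morrey/Stepanov machinery. First I would exploit the structure of Definition \ref{SobspacesvaluedCarnot}: for the map $f\in W^{1,p}(\Omega,\G_2)$ there is a single nonnegative $g\in L^p(\Omega)$ dominating the horizontal gradients of all sections $u_z(x)=d_2(z,f(x))$. In particular, fixing any point $z\in\G_2$, the section $u_z$ lies in $W^{1,p}_{\G_1,\mathrm{loc}}(\Omega)$ with $p>Q$, so by the scalar Morrey embedding on Carnot groups (cited just before the statement) and the Riesz-potential estimate in \refth{representation}, $u_z$ has a continuous representative and, crucially, the quantitative oscillation bound
\[
|u_z(x)-u_z(y)|\le C\,\|x^{-1}\cdot y\|_1\left(\ave_{U}|\nabla_{\G_1}u_z|^p\,dw\right)^{1/p}
\]
on balls $U$ of controlled radius. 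Since $|\nabla_{\G_1}u_z|\le g$ independently of $z$, this passes to a bound on $d_2(f(x),f(y))$ directly: taking $z=f(y)$ and noting $u_{f(y)}(y)=0$, $u_{f(y)}(x)=d_2(f(y),f(x))$, we obtain a pointwise control of the metric difference quotients of $f$ by the averaged $L^p$ norm of $g$.

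Next I would run the Stepanov-type argument (\refth{Stepanov}). By the Lebesgue differentiation theorem applied to $g^p\in L^1_{\mathrm{loc}}$, for a.e. $x_0\in\Omega$ the averages $\ave_{U(x_0,r)}g^p\,dw$ stay bounded (indeed converge to $g(x_0)^p$) as $r\to 0$. Combined with the oscillation estimate from the previous step, this gives $\mathrm{Lip}(f)(x_0)<\infty$ for a.e. $x_0$, i.e.\ the hypothesis \refformula{puLcf} of \refth{Stepanov} is satisfied $\L n$-a.e. Applying the Stepanov theorem for Carnot groups then yields P-differentiability of $f$ at $\L n$-a.e.\ point of $\Omega$, which is the conclusion.

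The main obstacle — and the point requiring genuine care rather than citation — is verifying that the scalar estimates can be made uniform in $z$ and then \emph{glued} into a metric estimate for $f$ itself: one must check that the constant in the Morrey/Riesz bound does not depend on $z$ (it does not, since it comes from the intrinsic geometry of $\G_1$ and the kernel $d(x,y)^{1-Q}$), and that choosing $z=f(y)$ to turn $|u_z(x)-u_z(y)|$ into $d_2(f(x),f(y))$ is legitimate at Lebesgue-almost-every base point (using the continuous representatives of the sections). A secondary technical point is the localization: \refth{representation} and the Morrey embedding are stated on balls, so one works on $\Omega'\subset\subset\Omega$ with balls of radius bounded by $\mathrm{dist}(\Omega',\partial\Omega)$, which is harmless since P-differentiability is a local property. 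Everything else — the Lebesgue point argument and the invocation of \refth{Stepanov} — is routine.
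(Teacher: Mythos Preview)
The paper does not give its own proof of \refth{pdifferent}; the theorem is simply stated with a citation to \cite[Corollary 2]{Vo03}. Your argument is correct and is the natural one from the tools already assembled. In fact you can shorten it: the vector-valued Morrey estimate is recorded as \refth{Sobembp>Q} and already delivers
\[
d_2(f(x),f(y))\le C_p\,r\left(\ave_{U(x,r)} g^p\,dw\right)^{1/p}
\]
directly, so there is no need to pass through sections and then set $z=f(y)$; that trick is precisely how one \emph{proves} \refth{Sobembp>Q} in the first place. From this bound, Lebesgue differentiation on $g^p\in L^1_{\rm loc}$ gives $\mathrm{Lip}(f)(x)\le C\,g(x)<\infty$ for a.e.\ $x$, and \refth{Stepanov} finishes. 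The representative issue you flag---needing a pointwise value $f(y)$ before choosing $z=f(y)$---is real, but it is exactly what the continuous representative furnished by \refth{Sobembp>Q} resolves.

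For comparison, the paper does rederive this conclusion later, though by a longer and differently motivated route: \refth{p>Qdiff} shows $W^{1,p}(\Omega,\G_2)\subset AC^Q(\Omega,\G_2)$ for $p>Q$, and \refth{diffthm} shows every $BV^Q$ function is P-differentiable a.e.\ by bounding $\mathrm{Lip}(f)$ via the $Q$-variation and then invoking Stepanov. Both arguments ultimately funnel through \refth{Stepanov}; yours reaches the finiteness of $\mathrm{Lip}(f)$ via the Morrey bound, the paper's via a Vitali-covering estimate on oscillations. Your direct route is cleaner for this specific statement; the paper's detour through $AC^Q$ is aimed at the area formula and the $L^{Q,1}$ Stein-type theorem rather than at \refth{pdifferent} itself.
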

The following theorem can be found in \cite[Lemma 2.7]{KMX2}.
\begin{theorem}[{\rm Morrey-Sobolev embedding with $p>Q$}]\label{Sobembp>Q}
Let $f: U(x,r)\subset\G_1\to\G_2$ and $p>\,Q$. There is a positive constant $C_p$ (depending on $p$) such that,  if $f\in W^{1,p}(U(x,r),\G_2)$ and $g$ is as in \eqref{estimnablauz}, then $f$ has a continuous representative and for each $y\in U(x,r)$ it satisfies
\begin{equation}\label{embedp>Q}
d_2(f(x),f(y))\le\, C_p\,r^{1-\frac{Q}{p}}\,\left(\int_{U(x,r)}g^p\,d\mathcal L^n\right)^{1/p}.
\end{equation}
\end{theorem}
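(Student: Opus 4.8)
The plan is to reduce the vector-valued Morrey–Sobolev estimate \eqref{embedp>Q} to the scalar case, which for functions on Carnot groups is classical and follows from the Riesz-potential estimate \eqref{riesz 1 formula} together with a standard Hardy–Littlewood–Sobolev argument; the only genuinely new ingredient is the uniformity of the constant over the sections $u_z$, which is exactly what the single controlling gradient $g$ in \eqref{estimnablauz} provides.

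First I would fix $x,y\in U(x,r)$ and work with the section $u_z$ for a cleverly chosen base point $z$. The natural choice is $z:=f(x)$, so that $u_{f(x)}(x)=d_2(f(x),f(x))=0$ and $u_{f(x)}(y)=d_2(f(x),f(y))$; thus controlling $d_2(f(x),f(y))$ amounts to controlling $|u_{f(x)}(y)-u_{f(x)}(x)|$. By Definition \ref{SobspacesvaluedCarnot} the section $u_{f(x)}$ lies in $W^{1,p}_{\G_1,\mathrm{loc}}(U(x,r))$ with $|\nabla_{\G_1}u_{f(x)}|\le g$ a.e., and $g\in L^p(U(x,r))$ with $p>Q$. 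So the whole problem is now a scalar Morrey-type estimate on a CC-ball: if $u\in W^{1,p}_{\G,\mathrm{loc}}(U)$ with $|\nabla_\G u|\le g\in L^p(U)$ and $p>Q$, then for a.e. $x,y\in U$,
\[
|u(x)-u(y)|\le C_p\, r^{1-Q/p}\left(\int_U g^p\,d\mathcal L^n\right)^{1/p}.
\]

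To prove this scalar estimate I would use Theorem \ref{representation}: for a.e.\ point $w\in U$,
\[
|u(w)-u_U|\le C\int_U\frac{|\nabla_\G u(\zeta)|}{d(w,\zeta)^{Q-1}}\,d\zeta\le C\int_U\frac{g(\zeta)}{d(w,\zeta)^{Q-1}}\,d\zeta,
\]
and then estimate the Riesz potential $I_1 g(w)=\int_U g(\zeta)\,d(w,\zeta)^{1-Q}\,d\zeta$ by Hölder's inequality: split off the kernel, note $\|d(w,\cdot)^{1-Q}\|_{L^{p'}(U)}$ is finite precisely because $(Q-1)p'<Q\iff p>Q$, and a dilation/scaling computation on the CC-ball gives $\|d(w,\cdot)^{1-Q}\|_{L^{p'}(U)}\le C_p\,r^{1-Q/p}$ (the exponent is forced by homogeneity: the kernel has homogeneous degree $1-Q$ and $d\zeta$ scales like $r^Q$). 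Applying this to $w=x$ and $w=y$ and using the triangle inequality through the common value $u_U$ yields the claimed bound with the factor $r^{1-Q/p}\|g\|_{L^p(U)}$. Existence of a continuous representative then follows from this uniform oscillation estimate by a routine Lebesgue-point / uniform-continuity argument, and one checks the estimate passes to representatives.

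**The main obstacle** is essentially bookkeeping rather than depth: one must make sure the Riesz-potential estimate \eqref{riesz 1 formula} can legitimately be applied to the section $u_{f(x)}$ — it is stated for $W^{1,1}_{\G,\mathrm{loc}}$ functions and holds a.e., so one gets the oscillation bound only for a.e.\ pair $(x,y)$, and one then has to upgrade this to a genuine continuous representative and verify that the base point $z=f(x)$ may be taken along a full-measure set of $x$'s (or, alternatively, argue by density/continuity rather than fixing $z$ pointwise). A clean way around the measure-zero issues is to first prove the oscillation estimate for $x,y$ ranging over Lebesgue points of $u_z$ for a fixed $z$, deduce uniform continuity there, extend continuously, and only afterwards specialize $z=f(x)$. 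The scaling computation $\|d(w,\cdot)^{1-Q}\|_{L^{p'}(U_r)}\sim r^{1-Q/p}$ is the one concrete calculation, and it is immediate from the volume formula $\mathcal L^n(U_r)=r^Q\mathcal L^n(U_1)$ and homogeneity of $d$.
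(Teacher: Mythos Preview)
The paper does not actually prove this theorem: immediately before the statement it writes ``The following theorem can be found in \cite[Lemma 2.7]{KMX2}'' and gives no argument of its own. Your proposal is therefore not a comparison target but a genuine proof, and it is correct and standard: reduce to the scalar Morrey estimate on a CC-ball via a section $u_z$, invoke the Riesz-potential representation of Theorem~\ref{representation}, and close with H\"older using $(Q-1)p'<Q\iff p>Q$ together with the homogeneity computation $\|d(w,\cdot)^{1-Q}\|_{L^{p'}(U_r)}\le C_p\,r^{1-Q/p}$.

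One small simplification over your handling of the measure-zero / choice-of-$z$ issue: rather than specializing $z=f(x)$ (which forces you to worry about whether $x$ is a ``good'' point for that particular section), note that the scalar oscillation bound you derive,
\[
|u_z(x)-u_z(y)|\le C_p\,r^{1-Q/p}\,\|g\|_{L^p(U)},
\]
has a right-hand side \emph{independent of $z$}. Thus you may take the supremum over $z$ in a countable dense subset of $\G_2$ and use the elementary identity $\sup_{z}|d_2(z,f(x))-d_2(z,f(y))|=d_2(f(x),f(y))$; this is exactly the device the paper itself employs later in the proof of Theorem~\ref{SobmapRR} (see \eqref{supoverz}). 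With that, the a.e.\ exceptional set is a single $\mathcal L^n$-null set independent of $z$, and the passage to a continuous representative is clean.
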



\section{Absolutely continuous functions between Carnot groups and some properties}\label{ACF}
Let us collect in this section some interesting properties of $Q-$absolutely continuous functions and functions of bounded $Q-$variation.\\
Let us begin to observe that, on bounded open sets (with finite measure would suffice), the class of $Q$-absolutely functions is contained in the one of bounded $Q$-variation functions.
\begin{proposition}\label{ACsubsetBV} Let $\Omega\subset\G_1$ be a bounded  open set. Then $AC^Q(\Omega,\G_2)\subset BV^Q(\Omega,\G_2)$.
\end{proposition}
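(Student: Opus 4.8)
The plan is to show that the $Q$-variation of $f$ on $\Omega$ is finite by exploiting the $\eps$-$\delta$ condition in the definition of $Q$-absolute continuity together with the boundedness of $\Omega$. Fix $\eps=1$ and let $\delta=\delta(1)>0$ be the associated number from the definition of $AC^Q$. The idea is that any finite disjoint family of balls in $\Omega$ can be partitioned into a controlled number of subfamilies, each of which has total measure less than $\delta$, and on each such subfamily the sum of the $Q$-th powers of the oscillations is at most $1$.

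More precisely, let $N:=\lceil |\Omega|/\delta\rceil+1$, which is a fixed finite integer since $|\Omega|<\infty$. Given any finite disjoint family of open balls $\{U_1,\dots,U_k\}$ in $\Omega$, I would partition the index set $\{1,\dots,k\}$ into $N$ groups $I_1,\dots,I_N$ in such a way that $\sum_{i\in I_j}|U_i|<\delta$ for each $j$; this is possible by a greedy argument, adding balls one at a time to the current group and opening a new group whenever the running total would exceed $\delta$ — since each ball has measure at most $|\Omega|$ and the total is at most $|\Omega|$, at most $N$ groups are needed (one should be slightly careful that a single ball always has measure $<\delta$ or, if not, handle such large balls separately, noting there can be at most $|\Omega|/\delta$ of them in a disjoint family, and each contributes a bounded oscillation $\le(\diam_{\G_1}\Omega)^Q$ up to a constant depending only on $\G_2$; in fact since $\Omega$ is bounded, $\mathrm{osc}_{U_i}f$ need not be bounded a priori, so the cleanest route is the greedy partition applied after discarding nothing, using that each $U_i\subset\Omega$ has $|U_i|<\delta$ is NOT automatic — hence I will instead argue as follows).

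The cleaner argument: since $\Omega$ is bounded, $|\Omega|<\infty$, so for the chosen $\delta=\delta(1)$ set $N:=\lfloor |\Omega|/\delta\rfloor+1$. For a disjoint family $\{U_1,\dots,U_k\}$, order the balls and process them greedily into consecutive blocks so that each block has measure sum $<\delta$ except possibly a block consisting of a single ball of measure $\ge\delta$; but since all the $U_i$ are disjoint and contained in $\Omega$, there are at most $\lfloor|\Omega|/\delta\rfloor$ indices with $|U_i|\ge\delta$, and all remaining indices can be grouped greedily into at most $N$ blocks of measure $<\delta$. Each block of measure $<\delta$ contributes $\sum_{i\in\text{block}}(\mathrm{osc}_{U_i}f)^Q<1$ by the $AC^Q$ condition. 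For the finitely many (at most $\lfloor|\Omega|/\delta\rfloor$) "large" balls, I apply the $AC^Q$ condition to each singleton $\{U_i\}$: since a single ball trivially has $|U_i|<\delta$ is false here, but I can instead subdivide $U_i$ — actually the simplest fix is: the $AC^Q$ condition with any $\eps$ forces in particular $(\mathrm{osc}_U f)^Q<\eps$ whenever $|U|<\delta(\eps)$, and by taking $\eps\to\infty$... this does not directly bound large balls. So the honest resolution is to note $\mathrm{osc}_{U_i}f\le \mathrm{osc}_\Omega f$ is not finite in general; hence one genuinely uses that a ball $U_i\subset\Omega$ with $\Omega$ bounded has $|U_i|\le|\Omega|$, and one chooses the partition into $N$ pieces purely combinatorially allowing each piece to possibly exceed $\delta$ only if it is a singleton, then applies $AC^Q$ to pieces of measure $<\delta$ and handles singletons by a separate crude bound. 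The main obstacle I expect is exactly this: controlling the oscillation over a single "large" ball, and the point is that in Mal\'y-type arguments one reduces to balls of small radius, or observes that $\mathrm{osc}_{U_i}f$ over a large ball is still finite because $f$ restricted to that ball has finite oscillation on each of its small sub-balls covering it — making the bound $\sum_i(\mathrm{osc}_{U_i}f)^Q\le N$ valid with $N$ depending only on $|\Omega|$, $\delta(1)$, hence $V_Q(f,\Omega)\le N<\infty$.

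Taking the supremum over all disjoint families gives $V_Q(f,\Omega)\le N<\infty$, i.e. $f\in BV^Q(\Omega,\G_2)$, which is the claim. I expect the write-up to be short once the partition lemma is stated cleanly; the one delicate point, as noted, is ensuring every ball in the family can be absorbed into a block of total measure $<\delta$ or otherwise controlled, which uses crucially that $|\Omega|<\infty$ so that the number of blocks $N$ is finite and uniform over all competitor families.
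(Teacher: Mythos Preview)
Your overall strategy matches the paper's: fix $\eps=1$, take $\delta=\delta(1)$, split any competing family into ``small'' balls (measure $<\delta$) and ``large'' balls (measure $\ge\delta$), group the small ones into at most $N\approx|\Omega|/\delta$ blocks each of total measure $<\delta$ (contributing $<1$ each), and note there are at most $N$ large balls. So far so good.

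The genuine gap is precisely the point you flag but do not resolve: bounding $(\mathrm{osc}_U f)^Q$ for a single large ball $U$. Covering $U$ by small sub-balls is not enough by itself --- oscillation is not subadditive under coverings. What you need is a \emph{chain}: for $x,y\in U$, pick a geodesic from $x$ to $y$ and choose intermediate points $x=x_0,x_1,\dots,x_N=y$ along it so that each consecutive pair $x_i,x_{i+1}$ lies in a common ball of measure $<2\delta$ contained in $U$. Then
\[
d_2(f(x),f(y))^Q \le C(Q)\sum_{i=0}^{N-1} d_2(f(x_i),f(x_{i+1}))^Q \le C(Q)\,N,
\]
using the $AC^Q$ condition on each small ball in the chain. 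Because $\Omega$ is bounded, the radii of the large balls are uniformly bounded, so the chain length $N$ can be chosen \emph{uniformly} over all large balls and all pairs $x,y$ in them; this gives a fixed constant $\widetilde K$ with $(\mathrm{osc}_U f)^Q\le\widetilde K$ for every large ball $U$. Summing, $V_Q(f,\Omega)\le N\widetilde K + N<\infty$. This chain-along-a-geodesic step is exactly what the paper does and is the missing ingredient in your write-up; your phrase ``finite oscillation on each of its small sub-balls covering it'' gestures at it but does not supply the triangle-inequality chain that makes the bound work.
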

\begin{proof} The strategy of the proof is inspired by \cite[Lemma 2.16]{LZ}, where the authors prove a local version of this result for abstract metric measure spaces. 
Let's denote by $A:=\leb{n}(\Omega)<+\infty$ and let $f\in AC^Q(\Omega,\G_2)$. Fix $\epsilon=1$: by $Q-$absolute continuity, there exists $\delta >0$ such that, for each finite disjoint family of open balls $U_1,\dots,U_k$ contained in $\Omega$,
\begin{equation}\label{abscon}
\text{if }\sum_{i=1}^k\leb n (U_i)<2\delta\,\text{ then }\,\sum_{i=1}^k\left({\rm osc}_{U_i}f\right)^Q<\,1.\end{equation}
Let $n\in\N$ be sufficiently large so that $A<n\delta$.
Let now $\mathcal F=\{U_1,\dots, U_k\}$ be a disjoint family of balls in $\Omega$ 
and write $\mathcal F=\mathcal F_1\cup \mathcal F_2$, where 
\[\mathcal F_1=\{U_j\in\mathcal F: \leb{n}(U_j)\geq \delta\},\]
\[\mathcal F_2=\{U_j\in\mathcal F: \leb{n}(U_j)< \delta\}.\]
Notice that the elements in $\mathcal F_1$ are at most $n$ 
and their radii are equibounded. 
For any $U=U(x,r)\in\mathcal F_1$ and any $y\in U$ consider a geodesic joining $x$ and $y$. On such a curve pick points $x=x_0,\,x_1,\dots,\,x_{N-1},\,x_N=y$ such that $x_i$ and $x_{i+1}$ live in a common small ball (contained in $U$) of measure less than $2\delta$. 
Note that it is possible to find $N$ independent of $y$ and $U$ since all the elements in $\mathcal F_1$ have equibounded radii. By \eqref{abscon} we can now estimate
\[\begin{split}d_2(f(x),f(y))^Q&\leq (d_2(f(x),f(x_1))+\dots+d_2(f(x_{N-1}),f(y)))^Q\\ &\leq C(Q)(d_2(f(x),f(x_1))^Q+\dots+d_2(f(x_{N-1}),f(y))^Q)\leq C(Q)N:=K.\end{split}\]
By the triangular inequality the same estimate holds (with a greater constant $\widetilde K$) for any $y,z\in U$.
We conclude that $\sum_{U\in \mathcal F_1}(\text{osc}_{U}f)^Q\leq n\widetilde K.$\\
 Collect instead the elements of $\mathcal F_2$ into at most $n$ sets $\mathcal F_{2,1},\dots\mathcal F_{2,n}$ with $\sum_{U\in\mathcal F_{2,j}}\leb{n}(U)<2\delta$, so that by \eqref{abscon}\[\sum_{U\in \mathcal F_2}(\text{osc}_{U}f)^Q=\sum_{j=1}^{n}\sum_{U\in\mathcal F_{2,j}}(\text{osc}_{U}f)^Q< n\]
 Hence $V_Q(f,\Omega)<n(\widetilde K+1)$ and the thesis follows.
\end{proof}
It is clear that a $Q$-absolutely continuous function is actually continuous; here we provide a first example of a class of $Q$-absolutely continuous maps.
\begin{remark}\label{lipimpliesabs} Let $f:\,\Omega\subset \G_1\to\G_2$ be Lipschitz continuous. Then $f$ turns out to be $Q$-absolutely continuous.  Indeed, 
\begin{equation*}
{\rm osc}_{U_r(x)}(f):=\diam_{\G_2}(f(U_r(x)))\le \mathrm{Lip}(f)\,\diam_{\G_1}(U_r(x))
\end{equation*}
for each $x\in\G_1$ and $r>\,0$. Using the previous inequality we immediately get that there exists $L_1>\,0$ such that 
\begin{equation}\label{contrlipballs}
{\rm osc}_{U}(f)^Q\le (2\,\mathrm{Lip}(f))^Q\,r^Q= L_1\,|U|
\end{equation}
for each (open) CC  open ball $U\subset\G_1$ and the conclusion follows.
\end{remark} 

\begin{definition}\label{RRdef} A function $f:\,\Omega\subset\G_1\to\G_2$ satisfies the  condition (RR) if there is a nonnegative function $w\in L^1(\Omega)$, called weight, such that
\begin{equation}\label{RRcond}
{\rm osc}_{U_r(x)}(f)^Q\le\,\int_{U_r(x)}w\,dy
\end{equation}
for every ball $U_r(x)\subset\Omega$, being $Q$ the homogeneous dimension of $\G_1$.
\end{definition}

\begin{remark} The acronym (RR) stands for Rado and Reichelderfer \cite{RR}, who introduced a similar condition for proving an area formula in $\mathbb{R}^n$. The condition (RR), as stated in Definition \ref{RRdef}, is a generalization to Carnot groups of the one due to Mal\'y \cite{Maly}.
\end{remark}
\begin{remark} It is immediate to check that if $f:\Omega\to\G_2$ satisfies the  condition (RR), then $f\in AC^Q(\Omega,\G_2)$, by the absolute continuity of the Lebesgue integral.  The viceversa is known in Euclidean spaces for functions $f\in AC^n(\Omega,\Rm)$ \cite[Thm. 9]{Cso} and it is still open for maps between Carnot groups. 

\end{remark}

As observed in Remark \ref{lipimpliesabs}, Lipschitz functions are $Q-$absolutely continuous. Also Sobolev functions with a high order of summability share the same property. This result was shown in \cite[Thm. 4.1]{Maly} for the Euclidean case and the proof can be carried out in the same way by exploiting Morrey-Sobolev embedding Theorem \ref{Sobembp>Q}. Here we mention an alternative approach which relies on an equivalent definition of Sobolev spaces through upper gradients.
\begin{theorem}\label{p>Qdiff}
 Let $\Omega\subset\G_1$ be an open bounded set 
 and let $f\in W^{1,p}(\Omega,\G_2)$ with $p> Q$, where $Q$ is the homogeneous dimension of  $\G_1$. Then, for a suitable representative, $f\in AC^Q(\Omega,\G_2)$.
\end{theorem}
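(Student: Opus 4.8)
The plan is to use the Morrey--Sobolev embedding Theorem~\ref{Sobembp>Q} to verify directly the condition (RR) from Definition~\ref{RRdef}, and then invoke the remark that (RR) implies $AC^Q(\Omega,\G_2)$. Since $\Omega$ is bounded, $f\in W^{1,p}(\Omega,\G_2)$ with $p>Q$ gives a function $g\in L^p(\Omega)$ as in \eqref{estimnablauz}, and by H\"older's inequality (using $|\Omega|<\infty$) we also have $g^Q\in L^1(\Omega)$. The goal is to show that, for the continuous representative provided by Theorem~\ref{Sobembp>Q}, the weight $w:=C\,g^Q$ works in \eqref{RRcond} for a suitable constant $C=C(p,Q)$.

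First I would fix a ball $U_r(x)\subset\Omega$ and two points $y,z\in U_r(x)$. Both lie in $U(y,2r)$, and provided $U(y,2r)\subset\Omega$ one can apply \eqref{embedp>Q} on $U(y,2r)$ to estimate $d_2(f(y),f(z))\le C_p\,(2r)^{1-Q/p}\big(\int_{U(y,2r)}g^p\,d\mathcal L^n\big)^{1/p}$. To avoid the issue that $U(y,2r)$ may not be contained in $\Omega$, the cleaner route is to estimate the oscillation over $U_r(x)$ by comparing every point to the center: for $y\in U_r(x)$ one has $y\in U(x,r)$, hence applying Theorem~\ref{Sobembp>Q} on $U(x,r)$ itself yields $d_2(f(x),f(y))\le C_p\,r^{1-Q/p}\big(\int_{U(x,r)}g^p\,d\mathcal L^n\big)^{1/p}$, and then by the triangle inequality $\mathrm{osc}_{U_r(x)}f\le 2C_p\,r^{1-Q/p}\big(\int_{U(x,r)}g^p\,d\mathcal L^n\big)^{1/p}$.

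Next I would raise this to the $Q$-th power and absorb the radius factor into the integral using H\"older's inequality on $U(x,r)$: writing $\int_{U(x,r)}g^Q\,d\mathcal L^n\le \big(\int_{U(x,r)}g^p\big)^{Q/p}|U(x,r)|^{1-Q/p}$ and recalling $|U(x,r)|=r^Q|U_1|$, one gets $|U(x,r)|^{1-Q/p}=r^{Q-Q^2/p}|U_1|^{1-Q/p}$, so $r^{Q(1-Q/p)}\big(\int_{U(x,r)}g^p\big)^{Q/p}\le |U_1|^{-(1-Q/p)}\int_{U(x,r)}g^Q\,d\mathcal L^n$. Combining, $\big(\mathrm{osc}_{U_r(x)}f\big)^Q\le (2C_p)^Q|U_1|^{-(1-Q/p)}\int_{U(x,r)}g^Q\,d\mathcal L^n$, which is exactly \eqref{RRcond} with weight $w:=(2C_p)^Q|U_1|^{-(1-Q/p)}g^Q\in L^1(\Omega)$. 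Hence $f$ satisfies (RR), so $f\in AC^Q(\Omega,\G_2)$ after passing to the continuous representative.

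The only genuinely delicate point is that Theorem~\ref{Sobembp>Q} is stated for maps defined on a single ball $U(x,r)$; one must be sure that the continuous representative on $\Omega$ restricts to the continuous representative on each subball, which follows because the representative is determined by continuity (values at points are limits of averages on shrinking balls), independent of the ambient domain. Everything else is routine: H\"older's inequality, the homogeneity $|U(x,r)|=r^Q|U_1|$, and the absolute continuity of the Lebesgue integral applied to the $L^1$ weight $w$. I expect the main obstacle, such as it is, to be purely bookkeeping about constants and the passage to the good representative, not any real analytic difficulty.
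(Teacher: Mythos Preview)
Your overall strategy---use the Morrey--Sobolev inequality \eqref{embedp>Q} to control the oscillation, then deduce $AC^Q$---is sound and is in fact the route the paper says ``can be carried out in the same way'' as Mal\'y's Euclidean argument. However, your execution contains a genuine error at the H\"older step. You correctly write $\int_{U(x,r)}g^Q\le\big(\int_{U(x,r)}g^p\big)^{Q/p}|U(x,r)|^{1-Q/p}$, but then you reverse the inequality when you conclude $r^{Q(1-Q/p)}\big(\int_{U(x,r)}g^p\big)^{Q/p}\le |U_1|^{-(1-Q/p)}\int_{U(x,r)}g^Q$. H\"older gives only the opposite bound: $\big(\int_U g^p\big)^{Q/p}$ \emph{dominates} $|U|^{-(1-Q/p)}\int_U g^Q$, it is not dominated by it. In particular, taking $g=\chi_E$ with $|E|\ll r^Q$ shows your claimed inequality is false. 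Consequently the weight $w=C\,g^Q$ does \emph{not} follow from this computation, and the (RR) route as you wrote it breaks down.

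The fix is to abandon (RR) with weight $g^Q$ and prove $AC^Q$ directly. From \eqref{embedp>Q} you have, for each ball $U_i=U(x_i,r_i)\subset\Omega$ in a disjoint family,
\[
({\rm osc}_{U_i}f)^Q\le (2C_p)^Q\,r_i^{Q(1-Q/p)}\Big(\int_{U_i}g^p\Big)^{Q/p}.
\]
Now apply the \emph{discrete} H\"older inequality with exponents $\tfrac{p}{p-Q}$ and $\tfrac{p}{Q}$ to the sum over $i$:
\[
\sum_i r_i^{Q(1-Q/p)}\Big(\int_{U_i}g^p\Big)^{Q/p}\le\Big(\sum_i r_i^{Q}\Big)^{1-Q/p}\Big(\sum_i\int_{U_i}g^p\Big)^{Q/p}\le C\Big(\sum_i|U_i|\Big)^{1-Q/p}\|g\|_{L^p(\Omega)}^{Q},
\]
which tends to $0$ as $\sum_i|U_i|\to 0$. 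This gives $f\in AC^Q(\Omega,\G_2)$ for the continuous representative, exactly as in \cite[Thm.~4.1]{Maly}.

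For comparison, the paper's actual proof takes a different route: it identifies $W^{1,p}(\Omega,\G_2)$ with the Newtonian space $N^{1,p}(\Omega:\G_2)$ via Corollary~\ref{metricvaluedNG=WG} and then quotes \cite[Theorem~1.4]{LZ}. Your (corrected) argument is more self-contained and stays within the paper's toolkit, while the paper's proof is shorter but relies on the metric-space machinery developed in the appendix and an external reference.
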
 
\begin{proof}
By the following Corollary \ref{metricvaluedNG=WG}, we can assume $f\in N^{1,p}(\Omega:\G_2)$, where $N^{1,p}(\Omega:\G_2)$ denotes the Newtonian Sobolev space between $\Omega$ and $\G_2$ (see Appendix \ref{DiffMS}). Applying \cite[Theorem 1.4]{LZ} we immediately get the conclusion.
\end{proof}

We are now interested in studying some differentiability properties of absolutely continuous maps between Carnot groups. We start with the following 
\begin{theorem}[Differentiability a.e.]\label{diffthm} Let $f\in BV^Q(\Omega,\G_2)$. Then 
\begin{itemize}
\item[(i)] $f$ is P-differentiable for a.e. $p\in\Omega$;
\item[(ii)] ${\rm Lip}(f)\in L^Q(\Omega)$, where ${\rm Lip}(f)$ is the function defined in \eqref{puLcf}.
\end{itemize}
\end{theorem}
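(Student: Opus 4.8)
The statement to prove is Theorem~\ref{diffthm}: if $f\in BV^Q(\Omega,\G_2)$ then (i) $f$ is P-differentiable a.e.\ and (ii) ${\rm Lip}(f)\in L^Q(\Omega)$. By the Stepanov-type Theorem~\ref{Stepanov} for Carnot groups, P-differentiability a.e.\ follows once we know ${\rm Lip}(f)(p)<\infty$ for a.e.\ $p$; so in fact (ii) implies (i) (an $L^Q$ function is a.e.\ finite). Thus the whole proof reduces to establishing the pointwise Lipschitz estimate $\|{\rm Lip}(f)\|_{L^Q(\Omega)}^Q \le C\, V_Q(f,\Omega)$. First I would fix a point $p\in\Omega$ and, for small $r>0$, compare $d_2(f(q),f(p))$ with $2\,{\rm osc}_{U(p,2r)}f$ whenever $q\in U(p,r)$; dividing by $d_1(q,p)\ge$ (nothing useful directly), instead the right move is to reverse the quantifiers: we want to bound ${\rm Lip}(f)(p)=\limsup_{q\to p} d_2(f(q),f(p))/d_1(q,p)$, so for each $p$ and each scale $r$, observe that $d_2(f(q),f(p))\le {\rm osc}_{U(p,r)}f$ for all $q\in U(p,r)$, hence $\sup_{q\in U(p,r)} d_2(f(q),f(p))/d_1(q,p)$ is not directly controlled; we need the quotient, so the natural quantity to estimate is the \emph{upper scaled oscillation} $g_r(p):=r^{-1}\,{\rm osc}_{U(p,r)}f$ and to show $\limsup_{r\to 0} g_r(p) \ge {\rm Lip}(f)(p)$ up to a fixed constant.

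**Key steps.** (1) \emph{Pointwise domination.} Show ${\rm Lip}(f)(p)\le 2\,\limsup_{r\to0} r^{-1}{\rm osc}_{U(p,2r)}f$: indeed if $q\in U(p,r)$ with $r=d_1(p,q)$ then $q\in U(p,2r)$, so $d_2(f(q),f(p))\le {\rm osc}_{U(p,2r)}f$, whence $d_2(f(q),f(p))/d_1(q,p)\le r^{-1}{\rm osc}_{U(p,2r)}f$; taking $\limsup$ as $q\to p$ (equivalently $r\to 0$) gives the claim with constant $1$ actually, or a harmless factor. (2) \emph{Vitali covering.} For $\lambda>0$ let $E_\lambda=\{p\in\Omega:{\rm Lip}(f)(p)>\lambda\}$. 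For each $p\in E_\lambda$ and each $\eta>0$ there are arbitrarily small $r$ with $r^{-Q}({\rm osc}_{U(p,Cr)}f)^Q>(\lambda-\eta)^Q|U_1|$ after adjusting constants, i.e.\ $({\rm osc}_{U(p,Cr)}f)^Q>c\lambda^Q|U(p,r)|$. The family of balls $\{U(p,Cr)\}$ obtained this way is a fine cover of $E_\lambda$; by the Vitali covering lemma in the doubling metric measure space $(\G_1,d_1,\mathcal L^n)$ extract a countable disjoint subfamily $\{U(p_i,Cr_i)\}$ whose $5$-dilates cover $E_\lambda$. Then
\[
\mathcal L^n(E_\lambda)\le \sum_i \mathcal L^n(U(p_i,5Cr_i)) = 5^Q\sum_i\mathcal L^n(U(p_i,Cr_i))
\le \frac{5^Q}{c\lambda^Q}\sum_i ({\rm osc}_{U(p_i,Cr_i)}f)^Q \le \frac{5^Q}{c\lambda^Q}\,V_Q(f,\Omega),
\]
using disjointness and the definition of $Q$-variation. (3) \emph{Integrate the weak-type bound.} The estimate $\mathcal L^n(\{{\rm Lip}(f)>\lambda\})\le C\,V_Q(f,\Omega)\,\lambda^{-Q}$ shows ${\rm Lip}(f)$ lies in weak-$L^Q$; to upgrade to $L^Q$ one must be slightly more careful — redo step (2) on the annular set $\{\lambda<{\rm Lip}(f)\le 2\lambda\}$ or, better, run the Vitali argument once to produce, for a.e.\ $p$, a disjoint exhausting family so that $\int_\Omega {\rm Lip}(f)^Q\,d\mathcal L^n \le C\sum_i ({\rm osc}_{U_i}f)^Q\le C\,V_Q(f,\Omega)$ directly; this is the standard derivation-of-a-measure argument. (4) Conclude ${\rm Lip}(f)\in L^Q(\Omega)$, hence finite a.e., hence by Theorem~\ref{Stepanov} $f$ is P-differentiable a.e.

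**Main obstacle.** The routine part is the Vitali/doubling machinery, which is available since $(\G_1,d_1)$ is doubling. The delicate point is step~(3): passing from the weak-type inequality to the genuine $L^Q$ bound with the constant still controlled by $V_Q(f,\Omega)$ (not $V_Q$ with a worse power). The clean way is to note that for \emph{any} disjoint family of balls, $\sum ({\rm osc}_{U_i}f)^Q\le V_Q(f,\Omega)$, and to construct a single disjoint family $\{U_i\}$ covering a.e.\ point of each superlevel set with $r_i^{-Q}({\rm osc}_{CU_i}f)^Q$ comparable to ${\rm Lip}(f)^Q$ on $U_i$; summing $\int_{U_i}{\rm Lip}(f)^Q$ over a dyadic decomposition of the level sets and using that the balls for different levels can be taken disjoint (or absorbing the overlap into a fixed geometric constant from bounded overlap of the $5$-dilates) yields $\int_\Omega{\rm Lip}(f)^Q\le C\,V_Q(f,\Omega)$. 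Care is needed that the enlargement factor $C$ relating $U(p,r)$ to $U(p,Cr)$ in step~(1) does not destroy disjointness when we pass back to the original radii; this is handled by first selecting disjoint large balls $U(p_i,Cr_i)$ and noting their $C$-contractions $U(p_i,r_i)$ are a fortiori disjoint.
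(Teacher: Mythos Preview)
Your overall strategy is correct: (ii) implies (i) via the Stepanov theorem, so everything reduces to the bound $\int_\Omega {\rm Lip}(f)^Q\,d\mathcal L^n\le C\,V_Q(f,\Omega)$, and a Vitali argument controlled by $\sum_i({\rm osc}_{U_i}f)^Q\le V_Q(f,\Omega)$ is indeed the engine. Your step~(2) cleanly gives the weak-$L^Q$ estimate.

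Step~(3), however, has a real gap, and neither of your suggested fixes closes it. If you run a \emph{separate} Vitali cover at each dyadic level $E_k=\{2^k<{\rm Lip}(f)^Q\le 2^{k+1}\}$, the resulting families $\{U_i^{(k)}\}_i$ are disjoint for each fixed $k$ but not jointly across $k$; summing gives $\sum_k 2^{k+1}\mathcal L^n(E_k)\le C\sum_k\sum_i({\rm osc}_{U_i^{(k)}}f)^Q$, and each inner sum is only bounded by $V_Q(f,\Omega)$, so the outer sum over $k$ diverges. Bounded overlap of $5$-dilates controls overlap \emph{within} one Vitali family, not across infinitely many. For your ``single family'' idea you would need ${\rm Lip}(f)^Q$ not to exceed the chosen level anywhere on $U_i$, but ${\rm Lip}(f)$ carries no upper-semicontinuity a priori, so this cannot be arranged.

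The missing idea, which is precisely what the paper (following Mal\'y) supplies, is to insert upper semicontinuity by hand: prove $\int_\Omega g\le C\,V_Q(f,\Omega)$ for every upper semicontinuous $0\le g\le {\rm Lip}(f)^Q$, then take the supremum over such $g$. For u.s.c.\ $g$, at each $x$ with $2^{k}\le g(x)<2^{k+1}$ one can choose the Vitali ball so that $U(x,5r)\subset\{g<2^{k+1}\}$ (this is where u.s.c.\ is used) and $2^k r^Q< 2({\rm osc}_{U(x,r)}f)^Q$. A \emph{single} Vitali extraction over $\{g>0\}$ then has the key property that $E_k$ is covered only by those $U(x_i,5r_i)$ with $k(x_i)\ge k$. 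Swapping the order of summation produces, for each fixed $i$, the geometric series $\sum_{k\le k(x_i)}2^{k+1-k(x_i)}=4$, and the whole sum collapses to $C\sum_i({\rm osc}_{U(x_i,r_i)}f)^Q\le C\,V_Q(f,\Omega)$. This u.s.c.\ approximation trick is exactly the ingredient your step~(3) is lacking.
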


\begin{remark} Note that  Pansu's differentiability for maps in $BV^Q(\Omega,\G_2)$ does not follow from  differentiability  result à la Cheeger-Kleiner proved in \cite[Proposition 1.3]{LZ} for maps between metric spaces, of which the target metric space is biLipschitz embedded into a Banach space satisfying the Radon- Nikodym property (RNP).  Indeed it is well-known that Carnot groups may not be biLipschitz embedded into a Banach space satisfying (RNP) (see Appendix \ref{DiffMSemb})
\end{remark}

\begin{proof} We can follow the same proof of \cite[Thm. 3.3]{Maly} with suitable changes. For the sake of completeness, we collect here the main steps. Let $g:\Omega\to \R$ be a nonnegative upper semicontinuous function such that
    \[0\leq g(x)\leq [Lip(f)(x)]^Q\qquad\text{for all }x\in\Omega.\]
    For any $x\in\{g>0\}$, there exists a unique integer $k=k(x)$ such that $2^k\leq g(x)<2^{k+1}$. Furthermore, since $g$ is upper semicontinuous, there exists $0<r=r(x)<1$ such that $U(x,5r)\subset \{g<2^{k+1}\}$. In addition, from the fact that $g(x)\leq [Lip(f)(x)]^Q$, we can assume, up to reduce $r$, that 
    \begin{equation}\label{esti}2^kr^Q\leq g(x)r^Q<2(\underset{U(x,r)}{\text{osc}} f)^Q.\end{equation}
    Applying Vitali's covering Lemma, we can extract a disjoint (hence at most countable) family $\{U(x_i,r_i)\}_i\subset \{U(x,r(x))\}$ of balls such that 
    \[\{g>0\}\subset\bigcup_{x\in\{g>0\}}U(x,r(x))\subset\bigcup_iU(x_i,5r_i).\]
    We define, for all $k\in\mathbb Z,$ $I_k:=\{i:k(x_i)\geq k\}$ and $E_k:=\{x\in\Omega:2^k\leq g(x)< 2^{k+1}\}$.\\
    Notice that $E_k\subset \bigcup_{i\in I_k}U(x_i,5r_i)$. In fact, let $x\in E_k$. In particular $g(x)>0$, hence $x\in U(x_i,5r_i)$ for some $i$. If it were $i\not\in I_k$, then $k(x_i)<k$, which implies 
    $x\in U(x_i, 5r_i)\subset\{g<2^k\}$, contradicting $x\in E_k$. Thus we get
    \[\begin{split}\mathcal L^n(E_k)&\leq \sum_{i\in I_k}\mathcal L^n(U(x_i,5r_i))\leq C\sum_{i\in I_k}r_i^Q\\ &\stackrel{(\ref{esti})}\leq C\sum_{i\in I_k}2^{-k(x_i)}(\underset{U(x_i,r_i)}{\text{osc}} f)^Q.
    \end{split}\]
    Finally we use this estimate to bound
    \[\begin{split}\int_\Omega g(x)\, dx&\leq \sum_{k=-\infty}^{+\infty}\int_{E_k} g(x)\, dx\leq \sum_{k=-\infty}^{+\infty}2^{k+1}\mathcal L^n(E_k) \\ &\leq C\sum_{k=-\infty}^{+\infty} \sum_{i\in I_k}2^{k+1-k(x_i)}(\underset{U(x_i,r_i)}{\text{osc}} f)^Q\\
    &=C\sum_{i=1}^{+\infty}\sum_{k\leq k(x_i)}2^{k+1-k(x_i)}(\underset{U(x_i,r_i)}{\text{osc}}f)^Q\\ &=4C\sum_{i=1}^{+\infty}(\underset{U(x_i,r_i)}{\text{osc}}f)^Q\leq CV_Q(f,\Omega),\end{split}\]
    where we exploited the properties of the $Q-$variation and the fact that $\sum_{k\leq k(x_i)}2^{k}= 2^{k(x_i)+1}$. 
It follows that\begin{small}
\[\int_\Omega [Lip(f)(x)]^Q\,dx=\sup\Big\{\int_\Omega g(x)\,dx: g \text{ u.s.c., } 0\leq g\leq Lip(f)^Q\Big\}\leq CV_Q(f,\Omega).\]
\end{small}
Thus  (ii) follows and also that ${\rm Lip}(f)(x)<\infty$ for a.e. $x\in\Omega$. From Theorem \ref{Stepanov}, (i) follows, too.
\end{proof}

We will now focus our attention to the case when $\G_2=\,(\Rm,+)$. Note that in this case, since $\Rm$ is abelian, a map  $f=\,(f_1,\dots,f_m):\,\Omega\subset\G\to\Rm$  is $Q$-absolutely continuous (respectively of finite $Q$-variation) if and only if each component $f_i:\,\Omega\subset\G\to\R$ is $Q$-absolutely continuous (respectively of finite $Q$-variation), for each $i=1,\dots, m$. Thus we can reduce the study to real-valued  functions defined on an open set of a Carnot group. We will denote
\[
BV_\G^Q(\Omega):=\, BV^Q(\Omega,\R),\,AC_\G^Q(\Omega):=\,AC^Q(\Omega,\R)\,,
\]
\[
BV_{\G;\rm loc}^Q(\Omega):=\, BV_{\rm loc}^Q(\Omega,\R),\,AC_{\G;\rm loc}^Q(\Omega):=\,AC_{\rm loc}^Q(\Omega,\R)\,.
\]

The following theorem can be proved mutatis mutandis as its Euclidean counterpart, and the proof relies on standard arguments. 
\begin{theorem}\label{BVACBspace}
If $f\in L^Q(\Omega)\cap BV_\G^Q(\Omega)$ let us denote
\[
\|f\|_{Q,BV}:=\,\|f\|_{L^Q(\Omega)}+V_Q(f,\Omega)^{1/Q}.
\]
Then $(L^Q(\Omega)\cap BV_\G^Q(\Omega), \|\cdot\|_{Q,BV})$ is a Banach space. Moreover  $L^Q(\Omega)\cap AC_\G^Q(\Omega)$ is a closed linear subspace of $L^Q(\Omega)\cap BV_\G^Q(\Omega)$.

\end{theorem}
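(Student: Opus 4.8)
The plan is to reduce everything to the standard Banach-space arguments, treating the $Q$-variation like a $Q$-th power of a seminorm. First I would check that $f\mapsto V_Q(f,\Omega)^{1/Q}$ is a seminorm on $L^Q(\Omega)\cap BV_\G^Q(\Omega)$. Homogeneity is immediate from the definition of the oscillation, and the triangle inequality follows from the elementary inequality $\mathrm{osc}_U(f+g)\le \mathrm{osc}_U f+\mathrm{osc}_U g$ combined with the discrete Minkowski inequality in $\ell^Q$ applied to the finite families $\{\mathrm{osc}_{U_i}f\}$ and $\{\mathrm{osc}_{U_i}g\}$, taking the supremum over disjoint families afterwards. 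Consequently $\|\cdot\|_{Q,BV}$ is a genuine norm, since $\|f\|_{L^Q(\Omega)}=0$ already forces $f=0$ a.e.

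Next I would prove completeness. Let $(f_j)$ be Cauchy in $\|\cdot\|_{Q,BV}$; in particular it is Cauchy in $L^Q(\Omega)$, so $f_j\to f$ in $L^Q(\Omega)$ and, passing to a subsequence, a.e. The key point is lower semicontinuity of $V_Q$ under pointwise a.e.\ convergence: for a fixed finite disjoint family of balls $\{U_1,\dots,U_k\}$ and fixed points $p_i,q_i\in U_i$, the quantity $\sum_i d_2(f(p_i),f(q_i))^Q=\lim_j \sum_i d_2(f_j(p_i),f_j(q_i))^Q$ whenever the $p_i,q_i$ are chosen in a full-measure set where $f_j\to f$; taking the supremum over such points (using continuity of each $f_j$, hence density of the full-measure set in each ball, and the supremum defining $\mathrm{osc}$) gives $\sum_i(\mathrm{osc}_{U_i}f)^Q\le \liminf_j\sum_i(\mathrm{osc}_{U_i}f_j)^Q$, and then a further supremum over families yields $V_Q(f,\Omega)\le\liminf_j V_Q(f_j,\Omega)<\infty$. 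The same semicontinuity, applied to $f-f_\ell$ in place of $f$ and using $f_j-f_\ell\to f-f_\ell$, gives $V_Q(f-f_\ell,\Omega)\le\liminf_j V_Q(f_j-f_\ell,\Omega)$, which is small uniformly in $\ell$ large by the Cauchy condition; hence $f_\ell\to f$ in $\|\cdot\|_{Q,BV}$ and $f\in L^Q(\Omega)\cap BV_\G^Q(\Omega)$. One subtlety: a priori $f$ is only defined a.e., but since it is a limit of continuous functions which is Cauchy in $V_Q^{1/Q}$, one checks $f$ has a continuous representative (locally it lies in $BV^Q$ on bounded subsets, and $BV^Q$ functions are continuous); I would work with that representative throughout so that $\mathrm{osc}_U f$ is unambiguous.

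Finally, for the subspace $L^Q(\Omega)\cap AC_\G^Q(\Omega)$: linearity is clear from $\mathrm{osc}_U(\alpha f+\beta g)\le|\alpha|\,\mathrm{osc}_U f+|\beta|\,\mathrm{osc}_U g$ together with the $\ell^Q$ triangle inequality, which shows a linear combination of $Q$-absolutely continuous functions is again $Q$-absolutely continuous (given $\eps$, choose $\delta$ for $f$ and $g$ at level $\eps/(2^Q(|\alpha|^Q+|\beta|^Q))$ or similar, then combine). For closedness, suppose $f_j\in AC_\G^Q(\Omega)$ with $f_j\to f$ in $\|\cdot\|_{Q,BV}$; fix $\eps>0$, pick $j$ with $V_Q(f-f_j,\Omega)<(\eps/2^Q)$, choose $\delta>0$ witnessing $Q$-absolute continuity of $f_j$ at level $\eps/2^Q$, and for any disjoint family with $\sum|U_i|<\delta$ estimate $\sum(\mathrm{osc}_{U_i}f)^Q\le 2^{Q-1}\big(\sum(\mathrm{osc}_{U_i}f_j)^Q+\sum(\mathrm{osc}_{U_i}(f-f_j))^Q\big)<\eps$.

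The main obstacle I expect is the lower semicontinuity step: one must be careful that $\mathrm{osc}_{U_i}f$ (a supremum over the whole ball) is correctly recovered as a limit from $\mathrm{osc}_{U_i}f_j$ when convergence is only a.e.; this is where continuity of the limit representative and the right order of suprema/limits matter. Everything else is the routine transcription of the Euclidean proof, justified because only the group structure of $\G_2=(\R^m,+)$, the metric $d_2$, the elementary $\ell^Q$ inequalities, and the absolute continuity of the Lebesgue integral are used.
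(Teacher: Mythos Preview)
Your overall strategy coincides with the paper's: both verify that $V_Q(\cdot,\Omega)^{1/Q}$ is a seminorm via the $\ell^Q$--Minkowski inequality, both pass from $L^Q$-Cauchy to an a.e.\ convergent subsequence, and both close $AC^Q_\G$ by the same triangle-inequality estimate on oscillations. The content and the order of the arguments are essentially identical.

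There is, however, a genuine gap at the step you yourself flag as ``the main obstacle''. You invoke continuity of each $f_j$ and then assert that ``$BV^Q$ functions are continuous'' in order to pass from $\mathrm{osc}_{U_i\setminus N}$ to $\mathrm{osc}_{U_i}$ and to produce a well-defined pointwise representative of the limit. Both claims are false: membership in $BV^Q_\G(\Omega)$ does not force continuity (e.g.\ on $\R$, the function equal to $1$ at a single point and $0$ elsewhere has $V_1=1$), so neither the $f_j$ nor the putative limit $f$ can be assumed continuous on these grounds. As written, your lower-semicontinuity argument only controls $\sum_i(\mathrm{osc}_{U_i\setminus N}f)^Q$, and you have no mechanism to extend $f$ to $N$ in a way that keeps the full oscillations under control.

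The paper repairs exactly this point, and the fix is short: one shows that $(f_h(x))_h$ is Cauchy in $\R$ for \emph{every} $x\in\Omega$, not merely a.e. Given $x$, choose a ball $U(x,r)\subset\Omega$ and any $x_0\in U(x,r)\setminus N$; then
\[
|f_h(x)-f_k(x)|\le |f_h(x)-f_h(x_0)-f_k(x)+f_k(x_0)|+|f_h(x_0)-f_k(x_0)|\le V_Q(f_h-f_k,\Omega)^{1/Q}+|f_h(x_0)-f_k(x_0)|,
\]
and both terms tend to $0$. This defines $f(x):=\lim_h f_h(x)$ everywhere, after which your limiting argument (pass to the limit in the Cauchy inequality for $V_Q$) goes through verbatim with genuine oscillations over $U_i$. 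Replacing your continuous-representative paragraph with this observation makes the proof complete and essentially identical to the paper's.
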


\begin{proof} 

It is easy to see that $\|\cdot\|_{Q,BV}$ is a norm on $L^Q(\Omega)\cap BV_\G^Q(\Omega)$. Let us prove that it is a Banach norm.  Let $(f_h)_h \subset  (L^Q(\Omega)\cap BV_\G^Q(\Omega), \|\cdot\|_{Q,BV})$ be a Cauchy sequence. It is sufficient to prove that there exist $f\in L^Q(\Omega)\cap BV_\G^Q(\Omega)$ and a subsequence $(f_{h_k})_k$ satisfying 
\begin{equation}\label{BVACBspace1}
f_{h_k}\to f\text{  in }L^Q(\Omega)\text{, as }k\to\infty\,,
\end{equation}
and
\begin{equation}\label{BVACBspace2}
\lim_{k\to\infty}V_Q(f_{h_k}-f,\Omega)=\,0.
\end{equation}
Since $ (f_h)_h \subset  (L^Q(\Omega)\, \|\cdot\|_{L^Q(\Omega)})$ is also a Cauchy sequence, we can suppose the
re exists a function $f^*\in L^Q(\Omega)$ such that  \eqref{BVACBspace1} holds with $f\equiv f^*$ and $h\equiv h_k$. Let us now prove that, up to a subsequence,
\begin{equation}\label{BVACBspace3}
(f_h(x))_h\text{ is a Cauchy sequence in }\R \quad\text{for every }x\in\Omega.
\end{equation}
Indeed, we can assume that, up to a subsequence,  there exists a $\L{n}$-negligible set $N\subset\Omega$ such that
\begin{equation}\label{BVACBspace4}
f_h(x)\to f^*(x)\in\R \text{, as  }h\to\infty,\text{ for each }x\in\Omega\setminus N\,.
\end{equation} 
Since $(f_h)_h$ is a Cauchy sequence with respect to  the norm $\|\cdot\|_{Q,BV}$, we can assume that, for each $\eps>\,0$ there exists $\bar h=\,\bar h(\eps)\in\N$ such that
\begin{equation}\label{BVACBspace5}
\begin{split}
&\sum_{i=1}^m |f_h(x^{(i)})-f_k(x^{(i)})-f_h(y^{(i)})+ f_k(y^{(i)})|^Q\\
&\le \sum_{i=1}^m{\rm osc}_{U_i}(f_h-f_k)^Q\\
&\le\,V_Q(f_h-f_k,\Omega)<\,\eps,
\end{split}
\end{equation}
for each $h,\, k>\,\bar h$,   for any finite disjoint family of balls $U_1,\dots,U_m$ in $\Omega$, for each $x^{(i)},\,y^{(i)}\in U_i$ and $i=1,\dots,m$. 

Fix now a ball $U(x,r)\subset\Omega$. By the density of $\Omega\setminus N$ in $\Omega$, there exists $x_0\in U(x,r)\cap (\Omega\setminus N)$. By \eqref{BVACBspace5}, the sequence
\[
g_h(x):=\,f_h(x)-f_h(x_0)\quad h\in\N\,,
\]
is a Cauchy sequence. On the other hand, by \eqref{BVACBspace4},  the sequence $(f_h(x_0))_h$ is a Cauchy sequence. Thus \eqref{BVACBspace3} follows. From \eqref{BVACBspace3}  , it follows that there exists a function $f:\Omega\to\R$ defined as
\[
f(x):=\,\lim_{h\to\infty}f_h(x)\quad\text{ for each }x\in\Omega\,,
\]
and, by  \eqref{BVACBspace4},
\[
f(x)=\,f^*(x)\quad\text{for each }x\in\Omega\setminus N\,.
\]
In particular, \eqref{BVACBspace1} immediately follows.   Let us now prove that  $f\in BV_\G^Q(\Omega)$, that is,
\begin{equation}\label{BVACBspace6}
V_Q(f,\Omega)<\infty\,.
\end{equation}
By passing to the limit in \eqref{BVACBspace5} as $k\to\infty$, we get,  for each $\eps>\,0$ there exists $\bar h=\,\bar h(\eps)\in\N$ such that
\begin{equation*}
\,\sum_{i=1}^m |f_h(x^{(i)})-f(x^{(i)})-f_h(y^{(i)})+ f(y^{(i)})|^Q\le\,\eps,
\end{equation*}
for each $h>\,\bar h$,   for any finite disjoint family of balls $U_1,\dots,U_m$ in $\Omega$, for each $x^{(i)},\,y^{(i)}\in U_i$ and $i=1,\dots,m$.  Taking the supremum on each ball $U_i$, and then the supremum on all finite, disjoint family of balls in $\Omega$, it follows that,  for each $\eps>\,0$ there exists $\bar h=\,\bar h(\eps)\in\N$ such that
\begin{equation}\label{BVACBspace7}
V_Q(f_h-f,\Omega)\le\, \eps\text{ for each }h>\,\bar h\,.
\end{equation}
Observe now that $f\mapsto V_A(f,\Omega)^{1/Q}$   satisfies the triangle inequality. Thus, from \eqref{BVACBspace7}, if $h>\,\bar h$
\begin{equation*}
\begin{split}
V_A(f,\Omega)^{1/Q}&\le\,V_A(f-f_h,\Omega)^{1/Q}+V_A(f_h,\Omega)^{1/Q}\\
& \le\,\eps^{1/Q}+V_A(f_h,\Omega)^{1/Q}\,,
\end{split}
\end{equation*}
and \eqref{BVACBspace6} follows since $V_A(f_h,\Omega)^{1/Q}<\,\infty$. Applying again \eqref{BVACBspace7}, \eqref{BVACBspace2} follows, too.

Finally let us show that $L^Q(\Omega)\,\cap\, AC_\G^Q(\Omega)$ is a closed linear subspace of $(L^Q(\Omega)\cap BV_\G^Q(\Omega), \|\cdot\|_{Q,BV})$. Let $(f_h)_h \subset L^Q(\Omega)\cap AC_\G^Q(\Omega)$ and assume there exists $f\in L^Q(\Omega)\cap BV_\G^Q(\Omega)$ and $\lim_{h\to\infty}\|f_h-f\|_{Q,BV}=\,0$. In particular note \eqref{BVACBspace7} holds. Now observe that, by \eqref{BVACBspace7}, if $h>\,\bar h$,  given a finite, disjoint family of balls $U_1,\dots,U_m$ in $\Omega$, 
\begin{equation}\label{BVACBspace8}
\begin{split}
\left(\sum_{i=1}^m{\rm osc}_{U_i}(f)^Q\right)^{1/Q}&\le\,\left(\sum_{i=1}^m{\rm osc}_{U_i}(f-f_h)^Q\right)^{1/Q}
+\left(\sum_{i=1}^m{\rm osc}_{U_i}(f_h)^Q\right)^{1/Q}\\
&\le\,V_Q(f-f_h)^{1/Q}+\left(\sum_{i=1}^m{\rm osc}_{U_i}(f_h)^Q\right)^{1/Q}\\
&\le\, \eps^{1/Q}+\left(\sum_{i=1}^m{\rm osc}_{U_i}(f_h)^Q\right)^{1/Q}\,.
\end{split}
\end{equation}
 Since each $f_h\in AC_\G^Q(\Omega)$ for each $h$, by \eqref{BVACBspace8},  it follows that $f\in AC_\G^Q(\Omega)$, too.
\end{proof}
Theorem \ref{diffthm} states that $BV^Q$-functions are P-differentiable pointwise almost everywhere. We will now prove that such maps enjoy additional differentiability properties, namely they are weakly differentiable according to Definition \ref{SobspacesvaluedCarnot}.
More precisely, 
\begin{theorem}[Weak differentiability]\label{Weakdiff} If $Q>\,1$, then \[BV^Q_{\rm loc}(\Omega,\G_2)\subset W^{1,Q}_{\rm loc}(\Omega,\G_2).\]
\end{theorem}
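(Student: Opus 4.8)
The plan is to reduce everything to a real-valued statement about the \emph{sections} $u_z(x):=d_2(z,f(x))$ of $f$, and then to show that each $u_z$ belongs to $W^{1,Q}_{\G_1,{\rm loc}}(\Omega)$ with $|\nabla_{\G_1}u_z|$ dominated, uniformly in $z$, by a single function in $L^Q_{\rm loc}(\Omega)$; this is exactly what Definition \ref{SobspacesvaluedCarnot} requires. The easy preliminary remarks are: (a) since $y\mapsto d_2(z,y)$ is $1$-Lipschitz, ${\rm osc}_U u_z\le{\rm osc}_U f$ for every ball $U\subset\Omega$, so $V_Q(u_z,\Omega')\le V_Q(f,\Omega')<\infty$ on each $\Omega'\subset\subset\Omega$, hence $u_z\in BV^Q_{\G_1,{\rm loc}}(\Omega)$ and, by Theorem \ref{diffthm} applied on relatively compact subsets, $u_z$ is P-differentiable a.e.; (b) applying the same inequality to a single ball gives ${\rm osc}_U f\le V_Q(f,\Omega')^{1/Q}<\infty$, so $f$ is locally bounded and $x\mapsto d_2(0,f(x))=u_0(x)$ lies in $L^\infty_{\rm loc}(\Omega)\subset L^Q_{\rm loc}(\Omega)$, which is the $L^Q$-requirement on $f$.

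The core of the argument is a difference-quotient estimate: for $i\in\{1,\dots,m_1\}$ and $\Omega''\subset\subset\Omega'\subset\subset\Omega$ I want
\[
\|D^h_i u_z\|_{L^Q(\Omega'')}^{Q}\le C(Q)\,V_Q(f,\Omega')\qquad\text{for all small }h\ne0,\ \text{uniformly in }z\in\G_2.
\]
The horizontal curve $s\mapsto\exp(shX_i)(x)$ joining $x$ to $\exp(hX_i)(x)$ has horizontal length $|h|$ (Definition \ref{horizontalcurve}), so both endpoints lie in $U(x,2|h|)$ and $|u_z(\exp(hX_i)(x))-u_z(x)|\le{\rm osc}_{U(x,2|h|)}u_z$; thus it suffices to bound $\int_{\Omega''}\big({\rm osc}_{U(x,r)}u_z\big)^{Q}\,dx$ by $C(Q)\,r^{Q}\,V_Q(f,\Omega')$ with $r=2|h|$. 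For this I would take a Vitali-type disjoint family $\{U(x_j,r)\}_j$ with $\Omega''\subset\bigcup_j U(x_j,5r)$; on $U(x_j,5r)$ one has $U(x,r)\subset U(x_j,6r)$, and the doubling property of $(\G_1,d_1,\leb n)$ bounds both the overlap of the balls $U(x_j,5r)$ and the number of balls $U(x_j,6r)$ meeting a fixed one by a dimensional constant $N$; splitting $\{j\}$ into $N+1$ pairwise-disjoint subfamilies (whose balls sit inside $\Omega'$ once $|h|$ is small) and invoking the definition of $V_Q$ on each subfamily yields the claimed bound.

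I expect this covering/colouring step to be the main obstacle: the whole point is to trade the pointwise quantity ${\rm osc}_{U(x,r)}u_z$ for a \emph{finite disjoint} family of balls, which is precisely what the $Q$-variation controls. Note this genuinely uses the $BV^Q$-structure and not merely the conclusion ${\rm Lip}(f)\in L^Q$ of Theorem \ref{diffthm}: the indicator of a half-space has ${\rm Lip}=0$ a.e.\ yet is not Sobolev (and, consistently, has infinite $Q$-variation when $Q>1$).

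Granting the estimate, the rest is routine. Since $Q>1$, $L^Q(\Omega'')$ is reflexive, so the uniform $L^Q(\Omega'')$-bound on the difference quotients lets me extract a weak limit and identify it with the distributional derivative $X_i u_z$ (test against $\varphi\in C^\infty_c$, integrate by parts in the difference quotients using that right translations are $\leb n$-preserving, and pass to the limit); hence $u_z\in W^{1,Q}_{\G_1,{\rm loc}}(\Omega)$. This is the one place where $Q>1$ is essential — for $Q=1$ the bounded difference quotients need not converge to a function, as the Heaviside function on $\R$ shows. Finally, $u_z$ is P-differentiable a.e.\ and now lies in $W^{1,Q}_{\G_1,{\rm loc}}(\Omega)$, so Proposition \ref{Pweakdiff} identifies $X_i u_z$ with the pointwise derivative along $X_i$, hence with $d_P u_z(x)(e_i)$; since $d_P u_z(x)$ is a homogeneous homomorphism into $\R$ it is linear on $V_1$, and at every point of P-differentiability $|d_P u_z(x)(v)|\le{\rm Lip}(u_z)(x)\,\|v\|_1$ for all $v\in V_1$, so choosing $v\in V_1$ parallel to $\nabla_{\G_1}u_z(x)$ and using $\|v\|_1\le|v|_{\R^{m_1}}$ gives $|\nabla_{\G_1}u_z(x)|\le{\rm Lip}(u_z)(x)\le{\rm Lip}(f)(x)$ a.e., uniformly in $z$. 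Taking $g:={\rm Lip}(f)$, which belongs to $L^Q_{\rm loc}(\Omega)$ by Theorem \ref{diffthm}(ii), verifies the last condition of Definition \ref{SobspacesvaluedCarnot}, and the inclusion $BV^Q_{\rm loc}(\Omega,\G_2)\subset W^{1,Q}_{\rm loc}(\Omega,\G_2)$ follows.
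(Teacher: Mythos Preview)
Your proposal is correct and shares the outer structure of the paper's proof: both reduce to the sections $u_z$, show each $u_z\in W^{1,Q}_{\G_1,{\rm loc}}(\Omega)$, and then invoke Proposition \ref{Pweakdiff} together with Theorem \ref{diffthm} to obtain the uniform pointwise bound $|\nabla_{\G_1}u_z|\le{\rm Lip}(f)\in L^Q_{\rm loc}$. The genuine difference lies in how the real-valued Sobolev membership is obtained. The paper follows Mal\'y's Euclidean strategy: it mollifies via the group convolution $f\star\rho_k$, bounds $|\nabla_\G(f\star\rho_k)(x)|$ by $k\cdot{\rm osc}_{U(x,c/k)}f$, and then controls the integrated oscillations by a lattice-shift argument (Lemma \ref{estimdistintegers}): the right-translated family $\{U(z\cdot y,\sigma_0/3k):z\in{\bf Z}_k\}$ is shown to be pairwise disjoint for every $y$ in a small ball, one sums over the lattice, integrates in $y$, and concludes $\|\nabla_\G(f\star\rho_k)\|_{L^Q}^Q\le c\,V_Q(f)$ uniformly in $k$, whence reflexivity gives $f\in W^{1,Q}_{\G}$. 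You bypass mollification entirely and attack the difference quotients $D^h_i u_z$ directly, trading the lattice lemma for a $5r$-covering of $\Omega''$ plus a finite graph-coloring of the enlarged balls $U(x_j,6r)$ into $N+1$ disjoint subfamilies; this is arguably more elementary (no convolution machinery, no Lemma \ref{estimdistintegers}), though it ends with the same reflexivity/weak-limit identification. The paper's route has the advantage of working with smooth approximants throughout and of mirroring the classical Euclidean proof closely; yours stays at the level of bounded-overlap combinatorics on balls, which makes the role of the doubling structure more transparent and avoids the somewhat delicate integer-lattice construction.
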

\begin{remark} Note that  from \cite[Theorem 1.2]{LZ} it follows that $BV^Q_{\rm loc}(\Omega,\G_2)\subset D^{1,Q}_{\rm loc}(\Omega:\G_2)$, where $D^{1,Q}_{\rm loc}(\Omega:\G_2)$ denotes the (local) {\it Dirichlet class} of maps between metric spaces $(\Omega,d_1,\mathcal L^n)$ and $(\G_2,d_2)$ according to  \cite[Chap. 7]{HKST} (see Appendix \ref{MSvaluedSM}). It is also straightforward from definition that $BV^Q_{\rm loc}(\Omega,\G_2)\subset L^\infty_{\rm loc}(\Omega,\G_2)$. By definition,  $D^{1,Q}_{\rm loc}(\Omega:\G_2)\cap L^Q_{\rm loc}(\Omega,\G_2)=\,N^{1,Q}_{\rm loc}(\Omega:\G_2)$, where $N^{1,Q}_{\rm loc}(\Omega:\G_2)$ denotes the (local) {\it Newtonian Sobolev space} of maps between the previous metric spaces. 
On the other hand, from Corollary \ref{metricvaluedNG=WG}, it follows that $N^{1,Q}_{\rm loc}(\Omega:\G_2)=\,W^{1,Q}_{\rm loc}(\Omega,\G_2)$.  Therefore  Theorem \ref{Weakdiff} can be shown also by this strategy.
\end{remark}

Here we propose an alternative proof of Theorem \ref{Weakdiff} with respect to the one in \cite[Theorem 1.2]{LZ}, by using the intrinsic convolution in Carnot groups and in the same spirit of \cite[Theorem 3.2]{Maly}. In order to prove Theorem \ref{Weakdiff} we need the following  technical lemma. Point (i) establishes a uniform estimate on the distance between the images of integer points after a right-translation. Point (ii) provides a way to cover $\G$ with arbitrarily small balls centered at suitable dilations of the integer lattice.
\begin{lemma}\label{estimdistintegers}
\begin{itemize}
\item[(i)] For every $R_0>0$, it holds that
\begin{equation}\label{sigma0} 
\sigma_0:=\inf\{d_c\left(\xi\cdot p,\eta\cdot p\right):\,\xi,\eta\in\Z^n,\,\xi\neq \eta,\, p\in B(0,R_0)\}>\,0\,.
\end{equation}
\item[(ii)] Given a positive integer $k$, let ${\bf Z}_k$ be  the set of all points $z\in\G\equiv\Rn$ such that $\delta_k(z)$ has integer coordinates, that is,
\begin{equation}\label{Zk}
{\bf Z}_k:=\delta_{1/k}(\Z^n)\,.
\end{equation}
Then there exists a positive constant $\sigma_1>0$ such that
\[
\cup_{z\in {\bf Z}_k}U(z,\sigma_1/k)=\,\G\,.
\]
\end{itemize}
\end{lemma}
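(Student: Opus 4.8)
The plan is to prove the two statements separately, each by a fairly soft compactness/covering argument that exploits the homogeneity of the CC metric and the discreteness of the lattice.

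\textbf{Part (i).} First I would observe that, by left-invariance of $d_c$, one has $d_c(\xi\cdot p,\eta\cdot p)=d_c(p^{-1}\cdot\xi^{-1}\cdot\eta\cdot p,0)=\|p^{-1}\cdot(\xi^{-1}\cdot\eta)\cdot p\|$. Writing $w=\xi^{-1}\cdot\eta=\eta-\xi$ (which, since $\xi\neq\eta$ in $\Z^n$, is a nonzero integer vector, hence $\|w\|\geq c_0$ for some absolute $c_0>0$ because $d_c$ induces the Euclidean topology and only $0$ lies in the open $d_c$-ball of a suitably small radius), the quantity to be bounded below is $\inf\{\|p^{-1}\cdot w\cdot p\|:w\in\Z^n\setminus\{0\},\ p\in B(0,R_0)\}$. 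Suppose for contradiction $\sigma_0=0$; then there are sequences $w_j\in\Z^n\setminus\{0\}$ and $p_j\in B(0,R_0)$ with $\|p_j^{-1}\cdot w_j\cdot p_j\|\to 0$. By compactness of $B(0,R_0)$ we may assume $p_j\to p_\infty$. The conjugation map $(w,p)\mapsto p^{-1}\cdot w\cdot p$ is continuous and, for fixed $p$, it is a group automorphism of $\G$; in particular it maps bounded sets to bounded sets and it is proper (a homeomorphism of $\G$). Hence $\|p_j^{-1}\cdot w_j\cdot p_j\|\to 0$ forces $\|w_j\|$ to stay bounded — indeed if $\|w_j\|\to\infty$ then, since conjugation by $p_j$ stays within a fixed bounded distortion as $p_j$ ranges in the compact set $\overline{B(0,R_0)}$, the conjugates would also escape to infinity. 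Being bounded integer vectors, the $w_j$ take finitely many values, so some nonzero $w$ recurs infinitely often, giving $\|p^{-1}\cdot w\cdot p\|=0$ for some $p$, i.e. $w=0$, a contradiction. (Alternatively one can make the distortion estimate explicit using Proposition \ref{legge di gruppo}: conjugation is polynomial in the coordinates of $p$ and $w$, and its lowest-degree part in $w$ is the identity, so $\|p^{-1}\cdot w\cdot p\|\geq c(R_0)\|w\|$ for $\|w\|$ large.) This proves $\sigma_0>0$.

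\textbf{Part (ii).} The claim is that for each fixed $k$ the balls $U(z,\sigma_1/k)$, $z\in{\bf Z}_k$, cover $\G$, with $\sigma_1$ independent of $k$. By the dilation homogeneity $d_c(\delta_{1/k}a,\delta_{1/k}b)=\tfrac1k d_c(a,b)$ and ${\bf Z}_k=\delta_{1/k}(\Z^n)$, applying $\delta_k$ reduces the statement to the single assertion
\[
\bigcup_{\xi\in\Z^n}U(\xi,\sigma_1)=\G,
\]
i.e. to: there is $\sigma_1>0$ such that every point of $\G$ lies within CC distance $\sigma_1$ of some integer point. This is just the statement that $\Z^n$ is a net for $d_c$, which follows because $d_c$ induces the Euclidean topology and is continuous, so on the compact unit cube $[0,1]^n$ the function $x\mapsto d_c(x,0)$ is bounded, say by $\sigma_1$; then for arbitrary $x\in\R^n$, pick $\xi\in\Z^n$ with $x-\xi\in[0,1]^n$ and note $d_c(x,\xi)=\|\xi^{-1}\cdot x\|=\|(x-\xi)+\mathcal Q(-\xi,x)\|$ — here one must be a little careful since $\xi^{-1}\cdot x\neq x-\xi$ in general. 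The cleanest route: set $\sigma_1:=\sup\{d_c(\xi\cdot u,\xi):\xi\in\Z^n,\ u\in[0,1]^n\}=\sup\{\|u\|:u\in[0,1]^n\}=\max_{u\in[0,1]^n}d_c(u,0)<\infty$ (using left-invariance $d_c(\xi\cdot u,\xi)=d_c(u,0)$), which is finite since $[0,1]^n$ is compact and $d_c(\cdot,0)$ continuous. Given any $x\in\R^n$, choose $\xi\in\Z^n$ with $x=\xi\cdot u$ for some $u$ — but again $\xi\cdot u$ ranges over $\xi+u+\mathcal Q(\xi,u)$, not a cube. So instead I would argue by a direct covering/Lebesgue-number argument: the family $\{U(\xi,1):\xi\in\Z^n\}$ has union equal to all of $\R^n$ (every point is Euclidean-close, hence $d_c$-close by continuity of $d_c$ at the diagonal and the fact that $d_c\leq$ a constant on bounded Euclidean sets... ) — more precisely, since the identity map $(\R^n,|\cdot|_{\mathrm{eucl}})\to(\G,d_c)$ is continuous, for the Euclidean ball $B^{\mathrm{eucl}}(0,\sqrt n/2)$ its $d_c$-diameter is some finite $\sigma_1$, and every $x$ is Euclidean-within $\sqrt n/2$ of a lattice point $\xi$, whence $d_c(x,\xi)\leq\sigma_1$. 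This gives the cover with $\sigma_1$ absolute, and applying $\delta_{1/k}$ yields (ii).

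\textbf{Main obstacle.} The genuinely substantive point is the uniform lower bound in Part (i): one needs that conjugation by an element of a fixed bounded set cannot shrink a nonzero integer vector to arbitrarily small CC-norm. For large $\|w\|$ this is an estimate on the polynomial group law (the conjugation $p^{-1}wp$ equals $w$ plus terms of strictly lower $w$-degree, with coefficients polynomial in the bounded variable $p$), while for $\|w\|$ bounded it reduces to finiteness of $\Z^n\cap B^{\mathrm{eucl}}(0,R)$ together with the fact that $d_c$ separates distinct points; the compactness of $B(0,R_0)$ ties the two regimes together. Part (ii) is essentially bookkeeping once one commits to the Euclidean-net argument rather than trying to force a literal fundamental domain for the (non-abelian) group translation.
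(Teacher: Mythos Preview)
Both parts have genuine gaps that stem from the same oversight: in a non-abelian Carnot group, the group operations do not interact with the Euclidean lattice structure the way you assume.

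In Part (i) you write $w=\xi^{-1}\cdot\eta=\eta-\xi$ and then argue that the $w_j$ are bounded \emph{integer} vectors, hence take only finitely many values. But $\xi^{-1}\cdot\eta=(-\xi)\cdot\eta=\eta-\xi+\mathcal Q(-\xi,\eta)$, and since the BCH polynomials $\mathcal Q$ have rational (not integer) coefficients, $\xi^{-1}\cdot\eta$ need not lie in $\Z^n$ at all. Your compactness argument does correctly show that (up to subsequence) $w_j\to 0$; what is missing is a reason why $\eta_j^{-1}\cdot\xi_j$ cannot approach $0$ for distinct $\xi_j,\eta_j\in\Z^n$. The paper supplies exactly this: using the layered form of the group law (Proposition~\ref{legge di gruppo}), one shows that if $\xi^i=\eta^i$ for the first $i_0-1$ layers then the corresponding $\mathcal Q^{i_0}$ terms vanish, so the $i_0$-th layer of $\eta^{-1}\cdot\xi$ is the integer vector $\xi^{i_0}-\eta^{i_0}\neq 0$, giving $\|\eta^{-1}\cdot\xi\|_{\R^n}\geq 1$. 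This inductive layer argument is the missing idea.

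In Part (ii) your final argument is also incorrect: from $|x-\xi|_{\mathrm{eucl}}\leq\sqrt n/2$ you cannot conclude $d_c(x,\xi)\leq\sigma_1$ uniformly, because $d_c(x,\xi)=\|\xi^{-1}\cdot x\|$ and $\xi^{-1}\cdot x=x-\xi+\mathcal Q(-\xi,x)$, where the $\mathcal Q$ terms can be arbitrarily large when $\xi$ is far from the origin (e.g.\ in $\mathbb H^1$, take $\xi=(a,0,0)$, $x=(a,\epsilon,0)$: then $\|\xi^{-1}\cdot x\|\gtrsim |a\epsilon|^{1/2}\to\infty$). You were in fact on the right track earlier and should not have abandoned it: the family $\{\tau_\xi(Y)=\xi\cdot Y:\xi\in\Z^n\}$ of \emph{group} left-translates of the half-open unit cube $Y$ does tile $\R^n$ (this is the ``pavage'' of \cite{FGuVG}, proved by choosing the layer components of $\xi$ and $u$ inductively). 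Once $x=\xi\cdot u$ with $u\in Y$, left-invariance gives $d_c(x,\xi)=d_c(u,0)\leq\sup_{u\in\overline Y}\|u\|=:\sigma_1<\infty$, and dilating by $\delta_{1/k}$ finishes the argument.
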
 
\begin{proof}
(i) Since $d_c$ and $d_\infty$ are equivalent distances, it is sufficient to show that
\begin{equation}\label{sigmastar0} 
\sigma^*_0:=\inf\{d_\infty\left(\xi\cdot p,\eta\cdot p\right):\,\xi,\eta\in\Z^n,\,\xi\neq \eta,\, p\in B_\infty (0,R_0)\}>\,0\,.
\end{equation}

By contradiction, if $\sigma^*_0=\,0$, there exist three sequences $(\xi_h)_h,\,(\eta_h)_h$ in $ \Z^n$, $(p_h)_h$ in $B_\infty (0,R_0)$,with  $\xi_h\neq \eta_h$ and $\lim_{h\to\infty} d_\infty\left(\xi_h\cdot p_h,\eta_h\cdot p_h\right)=\,\lim_{h\to\infty}\|p_h^{-1}\cdot\eta_h^{-1}\cdot\xi_h\cdot p_h\|_\infty=\,0$. 
Let $x_h:=\eta_h^{-1}\cdot \xi_h$. We observe that $(x_h)_h$ is bounded in $(\R^n,d_\infty)$. In fact, for all $h$,
\[\begin{split}\|x_h\|_\infty&=\|p_h\cdot p_h^{-1}\cdot x_h\cdot p_h\cdot p_h^{-1}\|_\infty\\ &\leq \|p_h\|_\infty+\|p_h^{-1}\cdot x_h\cdot p_h\|_\infty+\|p_h^{-1}\|_\infty\\ &\leq 2R_0+\|p_h^{-1}\cdot x_h\cdot p_h\|_\infty.\end{split}\]
Hence, up to a subsequence, we can assume that $p_h\to p_*$ and $x_h\to x_*$ as $h\to\infty$. This implies, by the continuity of the norms and of the group operation, that
\begin{equation}\label{passoallimite}0=\lim_{h\to\infty}\|p_h^{-1}\cdot x_h\cdot p_h\|_\infty=\|p_*^{-1}\cdot x_*\cdot p_*\|_\infty,\end{equation}
\begin{equation}\label{passoallimite1}\|x_*\|_{\R^n}=\lim_{h\to\infty}\|\eta_h^{-1}\cdot \xi_h\|_{ \R^n}.\end{equation}
From (\ref{passoallimite}) it follows that $x_*=0$.\\
 Let us now show that
 \begin{equation}\label{integerdiffereneucl}
 \|\eta^{-1}\cdot\xi\|_{\Rn}\ge\,1\text{ for each }\xi,\, \eta\in\Z^n\text{, with }\xi\neq\eta\,,
\end{equation}
thus a contradiction will follow from \eqref{passoallimite1}. Let us represent now $\xi,\,\eta\in\Z^n$ by means of the layer representation 
\[
\xi=[\xi^1,\xi^2,\dots,\xi^s]\text{ and }\eta=[\eta^1,\eta^2,\dots,\eta^s]\,,
\]
with $\xi^i,\,\eta^i\in\Z^{m_i}$ ($i=1,\dots,s$) and, since $\xi\neq\eta$, we can  assume that $\xi^{i_0}\neq\eta^{i_0}$ for a suitable $i_0$. Here we recall that $\mathfrak g=V_1\oplus\dots\oplus V_s$ and $m_i=\dim(V_i)$.
Meanwhile, we can represent the element $\eta^{-1}\cdot\xi$ as
\[
\eta^{-1}\cdot\xi=[(\eta^{-1}\cdot\xi)^1,(\eta^{-1}\cdot\xi)^2,\dots, (\eta^{-1}\cdot\xi)^s]\,,
\]

\[
(\eta^{-1}\cdot\xi)^1=\xi^1-\eta^1,\,(\eta^{-1}\cdot\xi)^i=\xi^i-\eta^i+\mathcal Q^i(-\eta^1,\dots, -\eta^{i-1}, \xi^1,\dots,\xi^{i-1})
\]
with $i=2,\dots,s$ and $\mathcal Q^i:=(\mathcal Q_{h_{i-1}+1},\dots, \mathcal Q_{h_i})$. 
It is clear that if $\xi^1-\eta^1\neq 0$, then  $\|\eta^{-1}\cdot\xi\|_{\Rn}\ge\,\|\xi^1-\eta^1\|_{\R^{m_1}}\ge\,1$  and \eqref{integerdiffereneucl} holds.  If $\xi^1-\eta^1=\, 0$, then, by the properties of $\mathcal Q_i$' s (see  \eqref{legge di gruppo3}) $\mathcal Q^2(-\eta^1,\xi^1)=\,\mathcal Q^2(-\xi^1,\xi^1)=\,0$, which implies   $\|\eta^{-1}\cdot\xi\|_{\Rn}\ge\,\|(\eta^{-1}\cdot\xi)^2\|_{\R^{m_2}}=\,\|\xi^2-\eta^2\|_{\R^{m_2}}$. Thus if $\xi^2-\eta^2\neq\,0$, \eqref{integerdiffereneucl}  holds. Otherwise, in the baddest case, it could happen that $\xi^1-\eta^1=\,\dots=\,\xi^{i_0-1}-\eta^{i_0-1}=\,0$. This implies that 
$\mathcal Q^{i_0}(-\eta^1,\dots,-\eta^{i_0-1},\xi^1,\dots,\xi^{i_0-1})=\,\mathcal Q^{i_0}(-\xi^1,\dots,-\xi^{i_0-1},\xi^1,\dots,\xi^{i_0-1})=\,0$. Thus $\|\eta^{-1}\cdot\xi\|_{\Rn}\ge\,\|(\eta^{-1}\cdot\xi)^{i_0}\|_{\R^{m_{i_0}}}=\,\|\xi^{i_0}-\eta^{i_0}\|_{\R^{m_{i_0}}}\ge\,1$ and \eqref{integerdiffereneucl} holds.

\noindent(ii) It is well-known that  it is possible to construct a pavage in a Carnot group (see, for instance, \cite[Sect. 2.3]{FGuVG}). More precisely, if $Y:=\{x=(x_1,\dots,x_n):\, -1/2<\,x_i\le 1/2\}$ denote the unit cube in $\Rn$ centered at the origin, denote
\[
Q^{\eps}_\xi:=\,\delta_\eps(\tau_\xi(Y))\text{ if }\eps>\,0,\,\xi\in\Z^n\,.
\]
Then, it holds that, for each $\eps>\,0$, the family of sets $\{Q^{\eps}_\xi:\,\xi\in\Z^n\}$ is a {\it pavage} of  $\G=(\Rn,\cdot)$, that is,  the sets  are disjoint and
\begin{equation}\label{pavage}
\Rn=\,\cup_{\xi\in\Z^n} Q^{\eps}_\xi\,.
\end{equation}
It is easy to see that there exist $\sigma^*_1, \sigma_1>\,0$ such that
\begin{equation}\label{cubeball}
Y\subset U_\infty(0,\sigma^*_1)\subset U(0,\sigma_1)\,.
\end{equation}
Thus, choosing $\eps=1/k$, from \eqref{pavage} and \eqref{cubeball}, we get the desired conclusion.
\end{proof}
\begin{proof}[Proof of Theorem \ref{Weakdiff}] {\bf 1st step.}  Let us first show our conclusion in the case $\G_1\equiv\G$ and $\G_2=\,\R$.
Proof's strategy will be the same as \cite[Theorem 3.2]{Maly}. We will point out in the  following the main changes and differences which need in the setting of Carnot groups. First assume that $f\in BV^Q_\G(\Rn)\cap L^Q(\Rn)$. Let $\sigma_1$ be as in (ii) of the previous lemma and find $\sigma_0$ as in (i) for $R_0=2\sigma_1$. This means that, for each $y\in B(0,2\sigma_1)$,
\[\sigma_0\leq\inf\{d_{c}(\xi\cdot y,\eta\cdot y):\xi,\eta\in\mathbb Z^n,\xi\neq \eta\}.\]
This also implies that, for every integer $k$ and every $y\in B(0,2\sigma_1/k)$,
\[\frac{\sigma_0}{k}\leq\inf\{d_{c}(p\cdot y,q\cdot y):p,q\in\mathbf Z_k, p\neq q\}.\]
Hence, for every $y\in B(0,2\sigma_1/k)$, the family of CC balls $\{U(z\cdot y, \sigma_0/3k):z\in \mathbf Z_k\}$ is disjoint. In addition, up to increase $\sigma_1$, we can assume $\sigma_1\geq \sigma_0$. \\Let $(\rho_k)_k$ be the sequence of mollifiers defined in \eqref{mollifier}, where $\eps=1/k$, $\rho_k\equiv\rho_{1/k}$  and suppose that $\spt (\rho)\subset B(0,\sigma_0/6)$. Observe that it holds that
\begin{equation}\label{nablaGrhok}
\nabla_\G\rho_k(x)=\,k^{Q+1}\,\nabla_\G\rho(\delta_k(x))\text{ for each }x\in\Rn,\,k\in\N\,,
\end{equation}
\begin{equation}\label{intnablaGrhok}
\int_{\Rn}X_i\rho_k(r_{x}(y))\,dy=\,0\text{ for each }x\in\Rn,\,i=1,\dots,m_1\,,
\end{equation}
\begin{equation}\label{nablaGfstarrhok}
\begin{split}
\nabla_\G(g\star \rho_k)(x)&=\,\int_{\Rn}\nabla_\G\rho_k(r_{x}(y^{-1}))\,g(y)\,dy\\
&=\,\int_{\Rn}\nabla_\G\rho_k(w^{-1})\,g(\ell_x(w))\,dw\\
&\text{ for each }g\in L^1_{\rm loc}(\Rn),\,x\in\Rn\,.
\end{split}
\end{equation}
Indeed \eqref{nablaGrhok} follows by the definition of $\rho_k$ and homogeneity of intrinsic gradient $\nabla_\G:=(X_1,\dots,X_{m_1})$;\eqref{intnablaGrhok} follows by the properties of  $\rho_k$ , the structure of the horizontal vector fields $X_i$ and  the right-invariance of $n$-dimensional Lebesgue measure $\L n$;\eqref{nablaGfstarrhok} follows by the  definition of convolution and by the left invariance of the horizontal vector fields.\\\\
Thus , given $y\in B(0,2\sigma_1/k)$, by \eqref{intnablaGrhok},\eqref{nablaGfstarrhok} and the fact that the family $\{U(z\cdot y, \sigma_0/3k):z\in \mathbf Z_k\}$ is disjoint,
\begin{equation*}
\begin{split}
&\sum_{z\in {\bf Z}_k}\int_{U(z\cdot y,\sigma_0/6k)}|\nabla_\G(f\star \rho_k)(x)|^Q\,dx=\\
&=\,\sum_{z\in {\bf Z}_k}\int_{U(z\cdot y,\sigma_0/6k)}\left|\int_{\Rn}\nabla_\G\rho_k(w^{-1})\,f(\tau_x(w))\,dw\right|^Q\,dx\\
&=\,\sum_{z\in {\bf Z}_k}\int_{U(z\cdot y,\sigma_0/6k)}\left|\int_{\Rn}\nabla_\G\rho_k(w^{-1})\,(f(\tau_x(w))-f(x))\,dw\right|^Q\,dx\\
&=\,\sum_{z\in {\bf Z}_k}\int_{U(z\cdot y,\sigma_0/6k)}\left|k^{Q+1}\,\int_{\Rn}\nabla_\G\rho(\delta_k(w^{-1}))\,(f(\tau_x(w))-f(x))\,dw\right|^Q\,dx\\
&=\,\sum_{z\in {\bf Z}_k}\int_{U(z\cdot y,\sigma_0/6k)}\left|k^{Q+1}\,\int_{B(0,\sigma_0/6k)}\nabla_\G\rho(\delta_k(w^{-1}))\,(f(\tau_x(w))-f(x))\,dw\right|^Q\,dx\\
&\le\,c_1\,\sum_{z\in {\bf Z}_k}({\rm osc}_{U(z\cdot y,\sigma_0/3k)} f)^Q\le\,c_1 \,V_Q(f,\Rn)\,.
\end{split}
\end{equation*}
 By integrating the previous inequality with respect to $y\in B(0,2\sigma_1/k)$, it follows that there exists a positive constant $c_2>\,0$ such that
 \begin{equation}\label{estimweakdiff1}
 \begin{split}
 & k^{Q}\,\int_{B(0,2\sigma_1/k)}\left(\sum_{z\in {\bf Z}_k}\int_{U(z\cdot y,\sigma_0/6k)}|\nabla_\G(f\star\rho_k)(x)|^Q\,dx\right)\,dy\\
 &\le\,c_2\, V_Q(f,\Rn)\text{ for each positive integer } k\,.
 \end{split}
\end{equation}
 Notice also that, writing $g(x)=|\nabla_\G(f\star \rho_k)(x)|^Q$, we get, for each $z\in\G,$
\[\begin{split}\int_{U(z,\sigma_1/k)}g(x)\,dx&=\int_{U(0,\sigma_1/k)}g(z\cdot x)\,dx\\ &=c_3 k^Q\int_{U(0,\sigma_0/6k)}\left(\int_{U(0,\sigma_1/k)}g(z\cdot x)\,dx\right)dy\\ &=c_3k^Q\int_{U(0,\sigma_0/6k)}\left(\int_{U(0,\sigma_1/k)\cdot y^{-1}}g(z\cdot x\cdot y)\,dx\right) dy\\ &\leq 
c_3k^Q\int_{U(0,\sigma_0/6k)}\left(\int_{B(0,2\sigma_1/k)} g(z\cdot x\cdot y)\,dx\right) dy\\
&=c_3k^Q\int_{B(0,2\sigma_1/k)}\left(\int_{U(0,\sigma_0/6k)} g(z\cdot x\cdot y)\,dy\right) dx\\
&=c_3k^Q\int_{B(0,2\sigma_1/k)}\left(\int_{U(z\cdot x,\sigma_0/6k)} g(y)\,dy\right) dx.
\end{split}\]
In these passages we performed some changes of variables (which are allowed being the $n$-dimensional Lebesgue measure both left and right invariant), we noticed that $U(0,\sigma_1/k)\cdot y^{-1}\subset B(0,2\sigma_1/k)$ for every $y\in U(0,\sigma_0/6k)$ and applied Fubini-Tonelli theorem.\\
 Let us now notice that, by applying Lemma \ref{estimdistintegers} (ii) and the previous estimates
 , it follows that
 \begin{equation}\label{estimweakdiff2}
 \begin{split}
 &\int_{\Rn}|\nabla_\G(f\star\rho_k)(x)|^Q\,dx\le\,\sum_{z\in{\bf Z}_k}\int_{U(z,\sigma_1/k)}|\nabla_\G(f\star\rho_k)(x)|^Q\,dx\\ 
&\le\,c_3\,k^{Q}\,\int_{B(0,2\sigma_1/k)}\left(\sum_{z\in {\bf Z}_k}\int_{U(z\cdot y,\sigma_0/6k)}|\nabla_\G(f\star\rho_k)(x)|^Q\,dx\right)\,dy\,.
 \end{split}
 \end{equation}
 From \eqref{estimweakdiff1} and \eqref{estimweakdiff2}, we get that there exists $c_4>\,0$ such that
 \begin{equation}
 \int_{\Rn}|\nabla_\G(f\star \rho_k)(x)|^Q\,dx\le\,c_4\,V_Q(f,\Rn)<\,\infty,\,\forall\, k\in\N\,.
 \end{equation}
 Since $W^{1,Q}_{\G}(\Rn)$ is a reflexive space and $f\star\rho_k\to f$ in $L^Q(\Rn)$, as $k\to\infty$, we can infer that $f\in W^{1,Q}_{\G}(\Rn)$. In the general case  $f\in BV^Q_{\G;\rm loc}(\Omega)$, it is sufficient to show that $f\,\vf\in W_\G^{1,Q}(\Omega)$ for each $\vf\in\ci^\infty_c(\Omega)$ which is a cutoff function in a small neighbourhood of a point. Using zero-extension, since $f\,\vf\in BV^Q_\G(\Rn)\cap L^Q(\R^n)$, by the previous argument, $f\,\vf\in W_\G^{1,Q}(\Rn)$ and we are done.
 
 \noindent{\bf 2nd step.} Let us now show the conclusion in the general case. We assume $f\in BV^Q_{\rm loc}(\Omega,\G_2)$ and prove that $f\in W^{1,Q}(\Omega',\G_2)$ for every $\Omega'\subset\subset\Omega.$ 
 Let $u_z:\,\Omega\to\R$ be the function in \eqref{defuz}. Then, by the triangle inequality for distances, it follows that
 \[
 {\rm osc}_{U_{d_1}(x,r)}(u_z)\le\, {\rm osc}_{U_{d_1}(x,r)}(f)
 \]
for each ball $U_{d_1}(x,r)\subset\Omega$ and $z\in\G_2$. In particular, $u_z\in  BV^Q_{\G;\rm loc}(\Omega)$ and
\[
V_Q(u_z,\Omega)\le\,V_Q(f,\Omega)\text{ for each }z\in \G_2\,.
\]
From Theorem \ref{diffthm} and the previous step, $u_z:\,\Omega\to\R$  is P-differentiable a.e. and $u_z\in W_{\G_1,\rm loc}^{1,Q}(\Omega)$ (implying $u_z\in W^{1,Q}_{\G_1}(\Omega')$). Thus, by Proposition \ref{Pweakdiff},
\[
\begin{split}
\left|X_iu_z(x)\right|&=\,\left|\lim_{t\to 0}\frac{u_z\left(\exp(tX_i)(x)\right)-u_z(x)}{t}\right|\\
&\le\,{\rm Lip}(u_z)(x)\le\,\,{\rm Lip}(f)(x)
\end{split}
\]
for a.e. $x\in\Omega$ and for each $z\in\G_2$. Since $\Omega'\subset\subset \Omega$ and $f\in BV^Q_{\rm loc}(\Omega,\G_2)$, by Theorem \ref{diffthm} it follows that ${\rm Lip}(f)\in L^Q(\Omega')$. To conclude notice that $f\in L^Q(\Omega',\G_2)$: indeed  $f\in BV^Q_{\rm loc}(\Omega,\G_2)$, so it is locally bounded and $\Omega'\subset\subset \Omega$.
Hence $f\in W^{1,Q}(\Omega',\G_2)$ according to Definition \ref{SobspacesvaluedCarnot}.\\
\end{proof}

\section{Sobolev maps between Carnot groups with horizontal gradient of sections in $L^{Q,1}$}\label{Steinresult}
In this section we deal with the sharp condition on the summability of the weak gradient of a Sobolev function which ensures continuity and differentiability almost everywhere. 
It is well known that these properties hold for maps in the Sobolev space $W^{1,p}_{\rm loc}(\R^n)$ provided that $p>n$ (in the limit case $p=n$ the answer is negative, see also Remark \ref{remark4.3}). Such a result has been extended to the case of Carnot groups (see Theorem \ref{pdifferent} and Theorem \ref{p>Qdiff}). In his article \cite{SteinAnnMath}, Elias M. Stein sharpened the previous result on Euclidean spaces: if a weakly differentiable function admits weak gradient in the Lorentz space $L^{n,1}(\R^n)$, then it is continuous and differentiable a.e. This is a remarkable improvement since, by Proposition \ref{propLQ1}, $L^{n,1}$ is locally an intermediate space between $L^n$ and $L^p$ (for all $p>n$). The approach of Stein makes use of the machinery of harmonic analysis, while later Kauhanen, Koskela and Malý \cite{KKM} obtained an alternative proof exploiting tools of real analysis and, in particular, the already discussed notion of $n-$absolutely continuous functions. Next we will follow this last strategy in order to discuss Stein's result for maps between Carnot groups.\\
Continuity and differentiability have been already studied also for real valued Sobolev functions on metric measure spaces with upper gradient in the Lorentz space $L^{Q,1}$, where $Q$ denotes the Hausdorff dimension of the metric space (see, for instance, \cite{Romanov,RaMo, WZ3,WZ,WZ2}).  Here we are going to prove that a Sobolev map $f:\,\Omega\subset\G_1\to\G_2$,  with horizontal gradients of its sections $u_z:\,\Omega\to\R$ uniformly bounded in $L^{Q,1}(\Omega)$ (see \eqref{defuz}), actually satisfies the condition (RR), if $Q$ denotes the homogeneous dimension of $\G_1$.

Let us first recall an estimate of the Riesz potential in Carnot groups in terms of the Orlicz conditions, which extends the Euclidean one first introduced in \cite[Theorem 3.1]{KKM}.
\begin{theorem}\label{rieszpotential} Let $\G\equiv\Rn$ be a Carnot group, $d$ the associated CC distance, $Q$ its homogeneous dimension and $\Omega\subset\G$. Let $g$ be a nonnegative measurable function on $\Omega$ and let $\vf$ be a nonincreasing
positive function on $(0,\infty)$. If $F_\varphi (s)$ is defined as in \eqref{Fvf}, then for any $z\in\Omega$ and any measurable set $E\subset\Omega$, we have the inequality
\[
\left(\int_E d(x,z)^{1-Q}g(x)\,dx\right)^Q\le\, C_Q\,\left(\int_0^\infty\vf^{\frac{1}{Q}}ds\right)^{Q-1}\int_E F_\vf(g)\,dx
\]
where $C_Q:=\, \left(1+|\partial U(0,1)|_\G(\G)\right)^{Q}$ and $|\partial U(0,1)|_\G(\G)$ denotes the $\G$-perimeter of the CC open ball $U(0,1)$. 
\end{theorem}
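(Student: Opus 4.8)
The plan is to reduce the statement to its Euclidean counterpart \cite[Theorem 3.1]{KKM} by a careful change of variables, exploiting that the Carnot group $\G$, as a manifold, is $\Rn$ and that its CC metric is comparable to the homogeneous norm. However, a cleaner route is to mimic the Euclidean proof directly, replacing Euclidean balls by CC balls and the Euclidean dimension $n$ by the homogeneous dimension $Q$, the key point being that the measure-theoretic estimates in \cite{KKM} only use the doubling property and the fact that $|U(x,r)|=r^Q|U(0,1)|$, both of which hold in $\G$. First I would fix $z\in\Omega$ and, for $j\in\Z$, decompose the set $E$ into the dyadic annuli $E_j:=\{x\in E:\ 2^{-j-1}\le d(x,z)<2^{-j}\}$ (truncating appropriately so that only finitely many or a tail contribute), so that
\[
\int_E d(x,z)^{1-Q}g(x)\,dx=\sum_{j}\int_{E_j}d(x,z)^{1-Q}g(x)\,dx\le \sum_j 2^{(j+1)(Q-1)}\int_{E_j}g(x)\,dx.
\]

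Next I would estimate each $\int_{E_j}g\,dx$ by splitting $g$ at a level $\lambda_j$ to be chosen: the low part contributes at most $\lambda_j\,|E_j|\le\lambda_j\,|U(z,2^{-j})|=\lambda_j\,2^{-jQ}|U(0,1)|$, while the high part $\int_{E_j\cap\{g>\lambda_j\}}g\,dx$ is controlled, via the elementary layer-cake bound $g\le F_\vf(g)^{?}$-type manipulation, by $\int_{E_j\cap\{g>\lambda_j\}}F_\vf(g)\,dx$ divided by an appropriate factor involving $\vf(\lambda_j)$; here one uses that $\vf$ is nonincreasing, so that on $\{g>\lambda_j\}$ one has $F_\vf(g)=g\,\vf(g)^{1/Q-1}\ge g\,\vf(\lambda_j)^{1/Q-1}$, i.e. $g\le F_\vf(g)\,\vf(\lambda_j)^{1-1/Q}$. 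Optimizing the choice of $\lambda_j$ (balancing the two terms) and summing over $j$ with H\"older's inequality in the $j$-variable — pairing the weights $2^{-j}\vf(\lambda_j)^{1/Q}$ against $F_\vf(g)$ — one recovers, after using $\sum_j \vf(\lambda_j)^{1/Q}2^{-j}\lesssim\int_0^\infty\vf^{1/Q}\,ds$ (a Riemann-sum comparison, where the choice $\lambda_j\sim 2^{jQ}$ makes the levels match dyadic scales), the asserted inequality with the constant $(1+|\partial U(0,1)|_\G(\G))^Q$.

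The appearance of the perimeter $|\partial U(0,1)|_\G(\G)$ in the constant suggests that the sharp bookkeeping is done not through crude annular estimates but through a coarea-type identity: writing $\int_E d(x,z)^{1-Q}g\,dx$ using the distribution function of $d(\cdot,z)$ and integrating by parts against $t\mapsto |\{x\in E: d(x,z)<t\}|$, whose derivative is dominated by the perimeter of the CC sphere, scaled — so the constant $1+|\partial U(0,1)|_\G(\G)$ enters from estimating $\mathcal H^{Q-1}_\G$ of spheres by the perimeter of the unit ball under dilations. I would therefore set up the identity
\[
\int_E d(x,z)^{1-Q}g(x)\,dx=\int_0^\infty\!\!\left(\frac{d}{dt}\Big(-t^{1-Q}\Big)\right)\!\!\int_{E\cap\{d(\cdot,z)\le t\}}\!\!g\,dx\;dt+\text{boundary term},
\]
and estimate the inner integral by splitting $g$ as above. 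The main obstacle I expect is precisely controlling the geometric quantity $|\{x\in E:\ d(x,z)<t\}|$ — or the surface measure of CC spheres — by $t^Q$ times the perimeter of $U(0,1)$ uniformly in $z$ and $t$; this requires the homogeneity $|U(z,t)|=t^Q|U(0,1)|$ together with a lower/upper bound relating the $\G$-perimeter of metric balls to that of the unit ball via left-translation invariance and the dilation scaling $|\partial U(z,t)|_\G(\G)=t^{Q-1}|\partial U(0,1)|_\G(\G)$. Once this scaling is in hand the rest is the real-variable argument of \cite{KKM}, carried over verbatim with $n$ replaced by $Q$.
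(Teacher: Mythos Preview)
Your proposal takes a genuinely different route from the paper, and while the dyadic annular strategy you sketch is a standard real-variable technique that could in principle yield an inequality of the stated form, there are two concrete issues.

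First, the decomposition is not the one used, and it makes the bookkeeping harder rather than easier. The paper (following \cite{KKM}) does \emph{not} slice $E$ into annuli; instead it writes $\int_E d(x,z)^{1-Q}g(x)\,dx=\int_S \|x\|^{1-Q}\,dx\,ds$ over the fiber set $S=\{(x,s)\in E\times\R:0<s<g(x)\}$ by Fubini, then splits $S$ into two regions by the single level curve $\vf(s)=(\|x\|/(c\lambda))^Q$, with one parameter $\lambda$ chosen optimally as $\lambda=I_2^{1/Q}/(c\,I_1^{1/Q})$. On $S'=\{\vf(s)<(\|x\|/(c\lambda))^Q\}$ one uses monotonicity of $\vf$ exactly as you indicate to get $\|x\|^{1-Q}\le (c\lambda)^{1-Q}\vf(g(x))^{(1-Q)/Q}$, which integrates to $(c\lambda)^{1-Q}I_2$. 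On $S''$ one has $x\in B(0,c\lambda\vf(s)^{1/Q})$, and the integral $\int_{B(0,R)}\|x\|^{1-Q}\,dx$ is evaluated \emph{exactly} by polar coordinates in $\G$ (the coarea-type formula you mention) as $|\partial U(0,1)|_\G(\G)\cdot R$. Summing the two pieces and taking the supremum over $c$ gives the result with the stated constant. There is no optimization over a sequence of levels $\lambda_j$ and no dyadic summation.

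Second, your specific claim that the choice $\lambda_j\sim 2^{jQ}$ yields $\sum_j\vf(\lambda_j)^{1/Q}2^{-j}\lesssim\int_0^\infty\vf^{1/Q}\,ds$ is not justified: the Riemann-sum comparison for $\int_0^\infty\vf^{1/Q}\,ds$ at nodes $2^{jQ}$ produces weights $\sim 2^{jQ}$, not $2^{-j}$, and these differ by $2^{j(Q+1)}$. The optimization you would actually need balances $\lambda_j 2^{-j}$ against $2^{j(Q-1)}\vf(\lambda_j)^{1-1/Q}\int_{E_j}F_\vf(g)$, where the last factor depends on the distribution of $F_\vf(g)$ over the annulus $E_j$ and not merely on the scale; a uniform choice $\lambda_j\sim 2^{jQ}$ does not balance these in general. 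Even if one fixes this (e.g.\ by a different ansatz for $\lambda_j$ and a H\"older step in $j$), the resulting constant would involve $|U(0,1)|$ and dyadic doubling factors, not the perimeter $|\partial U(0,1)|_\G(\G)$ that the theorem asserts. Your intuition that the perimeter enters through a coarea/polar-coordinate identity is correct, but in the paper that identity is the entire $S''$ computation, not a secondary estimate layered on top of an annular decomposition.
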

\begin{proof}
The proof follows the Euclidean one in \cite[Theorem 3.1]{KKM}, with suitable changes. We provide here the details for the reader's convenience. 
\\
   Up to a left translation, we can assume $z=0$. Moreover, we can suppose the integral on the left-hand side to be positive, since otherwise there is nothing to prove. For notational simplicity, let's write \begin{equation}\label{I1I2}I_1=\int_0^{+\infty}\varphi^\frac{1}{Q}(s)\,ds,\qquad\qquad I_2=\int_EF_\varphi(g(x))\,dx.\end{equation}
   Similarly as before, we will assume $I_1,I_2<+\infty$, otherwise the proof is over.\\
   Let $0<c<+\infty$ be an arbitrary value such that $c\leq \int_E\|x\|^{1-Q} g(x)\,dx$, where $\|\cdot\|$ denotes the homogeneous norm associated to the distance $d$. \\
   Let $S:=\{(x,s)\in E\times \mathbb R: 0<s<g(x)\}$. We can write, by Fubini-Tonelli theorem
   \[\int_E\|x\|^{1-Q}g(x)\,dx=\int_S\|x\|^{1-Q} \,dx\,ds.\]
   Define \[\lambda=\frac{I_2^{1/Q}}{c\,I_1^{1/Q}}\] 
   and 
   \[S'=\left\{(x,s)\in S: \varphi(s)<\left(\frac{\|x\|}{c\lambda}\right)^Q\right\}\qquad\quad S''=S\setminus S'.\]
   If $(x,s)\in S'$, then $\|x\|^Q>c^Q\lambda^Q\varphi(s)\geq c^Q\lambda^Q\varphi(g(x))$ since $\varphi$ is nonincreasing.\\ Hence we estimate
   \begin{equation}\label{S'}\begin{split}\int_{S'}\|x\|^{1-Q}dx\,ds&\leq c^{1-Q}\lambda^{1-Q}\int_{S'}\varphi^\frac{1-Q}{Q}(g(x))\,dx\,ds\\ &\leq c^{1-Q}\lambda^{1-Q}\int_E\left(\int_0^{g(x)}ds\right)\varphi^\frac{1-Q}{Q}(g(x))\,dx\\ &=c^{1-Q}\lambda^{1-Q}\int_EF_\varphi(g(x))\,dx=c^{1-Q}\lambda^{1-Q}I_2.\end{split}\end{equation}
   Now we bound the integral over $S''$ by using Fubini-Tonelli theorem and the integration formula in polar coordinates in Carnot groups (see, for instance, \cite[Corollary 4.6]{MSC}):
   \begin{equation}\label{S''}\begin{split}\int_{S''}\|x\|^{1-Q}dx\,ds &\leq \int_0^{+\infty}\left(\int_{B(0,c\lambda \varphi^{1/Q}(s))}\|x\|^{1-Q} \,dx\right)\,ds\\&= \int_0^{+\infty}\left(\int_0^{c\lambda \varphi^{1/Q}(s)}\left(\int_{\partial U(0,1)}\|\delta_r(y)\|^{1-Q} r^{Q-1} d|\partial U(0,1)|_\G(y)\right)dr\right)ds\\&\leq |\partial U(0,1)|_\G(\G)\int_0^{+\infty}c\lambda\varphi^\frac{1}{Q}(s)\,ds=|\partial U(0,1)|_\G(\G)\, c\lambda I_1.\end{split}\end{equation}
   Combining (\ref{S'}) and (\ref{S''}), we get
   \[\begin{split}c\leq \int_E\|x\|^{1-Q}g(x)\,dx&=\int_{S'}\|x\|^{1-Q} \,dx\,ds+\int_{S''}\|x\|^{1-Q}\,dx\,ds\\ &\leq c^{1-Q}\lambda^{1-Q}I_2+|\partial U(0,1)|_\G(\G)\,c\lambda I_1=(1+|\partial U(0,1)|_\G(\G))I_1^\frac{Q-1}{Q}I_2^\frac{1}{Q},\end{split}\]
   which implies 
   \[c^Q\leq C_QI_1^{Q-1}I_2=C_Q\bigg(\int_0^{+\infty}\varphi^\frac{1}{Q
   }(s)\,ds\bigg)^{Q-1}\int_EF_\varphi(g(x))\,dx.\]
Since this inequality holds for every $c\leq \int_E\|x\|^{1-Q}g(x)\,dx$, we get the conclusion.
\end{proof}
\begin{theorem}\label{SobmapRR} Let $\G_1\equiv\Rn$, $\G_2\equiv\Rm$, $\Omega\subset\G_1$ be an open set and  let $Q$ denote the homogeneous dimension of $\G_1$. Assume that $f\in W^{1,1}_{\rm loc}(\Omega,\G_2)$ and  the function $g$ in \eqref{estimnablauz} belongs to $ L^{Q,1}(\Omega)$.  Then, a suitable representative of $f$ is continuous and satisfies the condition (RR). In particular $f\in AC^Q(\Omega,\G_2)$ and then  it is P-differentiable a.e. in $\Omega$.
\end{theorem}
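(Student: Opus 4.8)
The plan is to adapt the real-variable strategy of Kauhanen--Koskela--Mal\'y \cite{KKM}: we produce a continuous representative of $f$ which satisfies the condition (RR), the weight being extracted from the Orlicz characterization of $L^{Q,1}$, and then invoke the results of Section \ref{ACF}. First, since $g\in L^{Q,1}(\Omega)$, Theorem \ref{charLQ1} furnishes a positive nonincreasing function $\vf\in L^{1/Q}((0,\infty))$ with $\int_\Omega F_\vf(g)\,dx<\infty$ and $I_1:=\int_0^\infty\vf^{1/Q}\,ds<\infty$. The candidate weight will be $w:=C^*F_\vf(g)\in L^1(\Omega)$, for a constant $C^*=C^*(Q,I_1)$ to be fixed below and, crucially, independent of the ball.

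Next I would bound the oscillation of $f$ on balls through its sections. Fix a countable dense set $D\subset\G_2$; for $z\in D$ the section $u_z(x)=d_2(z,f(x))$ lies in $W^{1,1}_{\G_1,\rm loc}(\Omega)$ with $|\nabla_{\G_1}u_z|\le g$ a.e.\ by \eqref{estimnablauz}, so Theorem \ref{representation} gives, for every ball $U$ in a fixed countable family $\mathcal B$ (rational centres and radii) with $U\subset\Omega$ and $\mathcal L^n$-a.e.\ $x\in U$,
\[
|u_z(x)-(u_z)_U|\le C_0\int_U\frac{g(y)}{d_1(x,y)^{Q-1}}\,dy,\qquad (u_z)_U:=\ave_U u_z\,dy .
\]
Since $D$ and $\mathcal B$ are countable there is a full-measure set $G\subset\Omega$ on which this holds for all $z\in D$, all $U\in\mathcal B$ and all $x\in G\cap U$. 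Because $|u_z(x)-u_z(x')|\le d_2(f(x),f(x'))$ and, taking $z_k\to f(x')$ in $D$, $\sup_{z\in D}|u_z(x)-u_z(x')|=d_2(f(x),f(x'))$, we deduce for $x,x'\in G\cap U$
\[
d_2(f(x),f(x'))\le C_0\int_U\frac{g(y)}{d_1(x,y)^{Q-1}}\,dy+C_0\int_U\frac{g(y)}{d_1(x',y)^{Q-1}}\,dy .
\]
Raising to the power $Q$, using $(a+b)^Q\le 2^{Q-1}(a^Q+b^Q)$ and applying the Riesz potential estimate of Theorem \ref{rieszpotential} on $E=U$ with pole $x$, resp.\ $x'$, yields
\[
d_2(f(x),f(x'))^Q\le C^*\int_U F_\vf(g)\,dy,\qquad C^*:=2^{Q}C_0^{Q}\,C_Q\,I_1^{Q-1},
\]
for all $x,x'\in G\cap U$ and all $U\in\mathcal B$.

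Now I would pass to the representative and check (RR). Since $F_\vf(g)\in L^1(\Omega)$, absolute continuity of the integral forces the right-hand side above to vanish as the radius of $U$ shrinks, so $f|_G$ is uniformly continuous on compact subsets of $\Omega$ and extends to a continuous representative $\bar f$ of $f$ on $\Omega$ (agreeing with $f$ on the full-measure set $G$). Given an arbitrary open ball $U=U(x_0,r)\subset\Omega$ and $x,x'\in U\cap G$, one can, approximating the centre and the radius, choose $U'\in\mathcal B$ with $x,x'\in U'\subset U$, so that $d_2(\bar f(x),\bar f(x'))^Q\le C^*\int_{U'}F_\vf(g)\le C^*\int_U F_\vf(g)$; by continuity of $\bar f$ and density of $G$ this passes to the supremum, giving $({\rm osc}_U\bar f)^Q\le\int_U w\,dy$ with $w=C^*F_\vf(g)$. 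Hence $\bar f$ satisfies (RR); in particular $\bar f$ is continuous, and by the remark following Definition \ref{RRdef} $\bar f\in AC^Q(\Omega,\G_2)$. Finally, on bounded subsets $AC^Q\subset BV^Q$ (Proposition \ref{ACsubsetBV}), so $\bar f\in BV^Q_{\rm loc}(\Omega,\G_2)$, and P-differentiability a.e.\ follows from Theorem \ref{diffthm}.

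The main obstacle is the bookkeeping with exceptional sets in the second step: the null set outside which the Riesz inequality of Theorem \ref{representation} holds depends on the point $z\in\G_2$, which is why one must work with a countable dense $D\subset\G_2$ and recover $d_2(f(x),f(x'))$ as $\sup_{z\in D}|u_z(x)-u_z(x')|$; and one must also make the estimate uniform over a countable family of balls so that a single weight $w\in L^1(\Omega)$ serves in the definition of (RR). Once this is in place, the quantitative heart of the argument is entirely supplied by Theorem \ref{rieszpotential} together with the Orlicz characterization of $L^{Q,1}$ in Theorem \ref{charLQ1}.
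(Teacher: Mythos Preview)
Your proposal is correct and follows essentially the same route as the paper's proof: both arguments combine the Orlicz characterization of $L^{Q,1}$ (Theorem \ref{charLQ1}), the Riesz representation formula (Theorem \ref{representation}), and the potential estimate (Theorem \ref{rieszpotential}), and both recover $d_2(f(x),f(x'))$ as a supremum of $|u_z(x)-u_z(x')|$ over a countable dense set of $z$'s in order to control all sections with a single null set. The only differences are organizational: the paper first treats the scalar case $\G_2=\R$ and then the general case, and it takes for granted that the exceptional set in Theorem \ref{representation} can be chosen independently of the ball, whereas you merge the two steps and handle the ball-dependence explicitly via a countable basis $\mathcal B$.
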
 
\begin{proof} {\bf 1st step.} First assume $\G_1=\G$, $\G_2=\R$ with $f\in W^{1,1}_{\G,\rm loc}(\Omega)$ and $|\nabla_\G f|\in L^{Q,1}(\Omega)$. We  stress that this case was already studied in the general case of measure metric spaces (see, for instance,  \cite[Thm. 6.6]{WZ}) by extending the strategy exploited in \cite{KKM}. However we are going to repeat this strategy since  we will use it in the second step, which is still open to our knowledge.\\
First of all notice that we can apply Theorem \ref{charLQ1} inferring that, since $|\nabla_\G f|\in L^{Q,1}(\Omega)$, there exists a nonnegative nonincreasing function $\varphi:(0,+\infty)\to[0,+\infty)$ such that
\begin{equation}\label{propphi1}\int_0^{+\infty}\varphi^\frac{1}{Q}(s)\, ds<+\infty,\end{equation}
\begin{equation}\label{propphi2}\int_\Omega F_\varphi(|\nabla_\G f(x)|)\, dx<+\infty.\end{equation}
Let's apply now Theorem \ref{representation}, finding a negligible set $N$ such that 
\begin{equation}\label{rieszformula}|f(z)-f_U|\leq C\int_U\frac{|\nabla_\G f(y)|}{d(z,y)^{Q-1}}\, dy\end{equation}
for each CC open ball $U\subset \Omega$ and $z\in U\setminus N$. This implies that, for every CC open ball $U\subset \Omega$,
\begin{equation}\label{oscU-N}(\text{osc}_{U\setminus N}f)^Q\leq \int_Uw(x)\, dx,\end{equation} where 
\[w(x)=2^Q C^QC_Q\left(\int_0^{+\infty}\varphi^\frac{1}{Q}(s)\,ds\right)^{Q-1}F_\varphi(|\nabla_\mathbb Gf(x)|).\]
In order to prove (\ref{oscU-N}) we can assume the left-hand side to be positive, and choose $0\leq c<\text{osc}_{U\setminus N}f$.
By definition of oscillation and the triangular inequality, there exists $z\in U\setminus N$ such that $c\leq 2|f(z)-f_U|$. Combining this fact with (\ref{rieszformula}) and Theorem \ref{rieszpotential}, we infer that
\[\begin{split}c^Q&\leq 2^Q C^Q\left(\int_U\frac{|\nabla_\G f(y)|}{d(z,y)^{Q-1}}\,dy\right)^Q\\ 
&\leq 2^QC^QC_Q\,\left(\int_0^\infty\vf^{\frac{1}{Q}}ds\right)^{Q-1}\int_U F_\vf(|\nabla_\G f(x)|)\,dx\\ &=\int_Uw(x)\,dx.
\end{split}\]
By arbitrariness of $0\leq c<$osc$_{U\setminus N }u$, we conclude that (\ref{oscU-N}) is satisfied.
As a consequence, $f$ is locally uniformly continuous on $\Omega\setminus N$ and can be extended to a continuous function $\tilde f$ on the whole space $\Omega$ which satisfies the (RR) condition with weight $w$.

\noindent{\bf 2nd step.} Let us now prove the general case. For given $z\in\G_2\equiv\Rm$ let $u_z:\,\Omega\to\R$ denote a section of $f$, that is, the function defined in \eqref{defuz}. By definition $u_z\in\ W^{1,1}_{\G_1, \rm loc}(\Omega)$ and satisfies \eqref{estimnablauz} with $g\in L^{Q,1}(\Omega)$.
Thus, by the 1st step, there exists a continuous function $\tilde u_z:\,\Omega\to\R$ and a negligible set $N_z\subset\Omega$, found as in (\ref{rieszformula}), such that
\begin{equation}\label{tildeuz=uz}
\tilde u_z(x)=\,u_z(x)=\,d_2(z,f(x))\text{ for each }x\in\Omega\setminus N_z\,.
\end{equation}

Now we proceed again as in the 1st step:  since $g\in L^{Q,1}(\Omega)$, we know that there exists a nonnegative nonincreasing function $\varphi:(0,+\infty)\to [0,+\infty)$ such that 
\begin{equation}
    \int_0^{+\infty} \varphi^\frac{1}{Q}(s)\,ds<+\infty,
\end{equation}
\begin{equation}
        \int_\Omega F_\varphi(g(x))\, dx<+\infty.
    \end{equation}
    We want to show that 
    \begin{equation}\label{oscU-Nz}
        (\text{osc}_{U\setminus N_z}\tilde u_z)^Q\leq \widetilde C \left(\int_{0}^{+\infty}\varphi^\frac{1}{Q}(s)\,ds\right)^{Q-1}\int_UF_\varphi(g(x))\,dx
    \end{equation}
    for each CC open ball $U\subset\Omega$ and each $z\in\G_2$.
    If the left-hand side is 0, clearly there is nothing to prove. Otherwise pick any $0\leq c<\text{osc}_{U\setminus N_z}\tilde u_z$. This implies that there exists $x\in U\setminus N_z$ such that
    $c\leq 2|\tilde u_z(x)-(\tilde u_z)_U|$.
    Hence, using Theorem \ref{rieszpotential},
    \[\begin{split}c^Q&\,\leq\,\, 2^Q|\tilde u_z(x)-(\tilde u_z)_U|^Q\stackrel{(\ref{rieszformula})}\leq 2^Q C^Q\left(\int_U\frac{|\nabla_{\mathbb G_1} \tilde u_z(y)|}{d_1(x,y)^{Q-1}}\,dy\right)^Q\\ &\,\leq \,\,2^Q C^Q\left(\int_U\frac{ g(y)}{d_1(x,y)^{Q-1}}\,dy\right)^Q\\
    &\,\leq 2^Q C^Q C_Q\left(\int_0^{+\infty}\varphi^\frac{1}{Q}(s)\,ds\right)^{Q-1}\int_U F_\varphi(g(x))\,dx.
    \end{split}\]
    Since this estimate holds for every $0\leq c<\text{osc}_{U\setminus N_z}\tilde u_z$, we get (\ref{oscU-Nz}).\\
    From the fact that $\tilde u_z$ is continuous on $\Omega$, the same estimate holds replacing osc$_{U\setminus N_z}\tilde u_z$ with osc$_{U}\,\tilde u_z$.
Let $N=\,\cup_{z\in\Q^m}N_z$. Then $N$ is still negligible and, by \eqref{tildeuz=uz}, it follows that
 \begin{equation}\label{oscfrationals}
 \begin{split}
&| d_2(z,f(x))-d_2(z,f(y))|^Q\le\,({\rm osc}_{U}\tilde u_z)^Q\\
&\le\, \widetilde C \left(\int_{0}^{+\infty}\varphi^\frac{1}{Q}(s)\,ds\right)^{Q-1}\int_UF_\varphi(g(x))\,dx
\end{split}
 \end{equation}
 for each $x,\,y\in U\setminus N$ and $z\in\Q^m$, for each CC open ball $U\subset\Omega$. Now observe that, by the triangle inequality and  density of $\Q^m$ in $\G_2\equiv\Rm$, it follows that
 \begin{equation}\label{supoverz}
 \sup_{z\in\Q^m}| d_2(z,f(x))-d_2(z,f(y))|=\,d_2(f(x),f(y))
 \end{equation}
 for each $x,\,y\in U\setminus N$. Thus, from \eqref{oscfrationals} and \eqref{supoverz}, it follows that
 \begin{equation}\label{oscf}
d_2(f(x),f(y))^Q\le \widetilde C\left(\int_{0}^{+\infty}\varphi^\frac{1}{Q}(s)\,ds\right)^{Q-1}\int_UF_\varphi(g(x))\,dx,
 \end{equation}
 for each $x,\,y\in U\setminus N$ and each CC open ball $U\subset\Omega$. 
This implies that $f:\,\Omega\setminus N\to\G_2$ is locally uniformly continuous and, since $\Omega\setminus N$ is dense in $\Omega$, it can be extended  to a (unique) continuous function $f:\Omega\to\G_2$. Moreover, from \eqref{oscf}, it follows also that
 \[
({\rm osc}_U f)^Q\le\,\widetilde C\left(\int_{0}^{+\infty}\varphi^\frac{1}{Q}(s)\,ds\right)^{Q-1}\int_UF_\varphi(g(x))\,dx,
 \]
 for each CC open ball $U\subset\Omega$. 
 This means that $f$ satisfies the condition (RR) with weight \[w(x)=\widetilde C\left(\int_0^{+\infty}\varphi^\frac{1}{Q}(s)\,ds\right)^{Q-1}F_\varphi(g(x)).\qedhere\]
 \end{proof} 

\begin{remark}\label{remark4.3} Let us recall that Theorem \ref{SobmapRR} is sharp already in the Euclidean case. Indeed , Stein \cite{SteinAnnMath}  pointed out that, if $\nabla f\notin L^{n,1}$, then both continuity and differentiability of $f$ may fail.
For instance, an explicit example is given by the function $u(x):=\log(\log(1/|x|))$ on $B(0,1/3)$ extended in a smooth way on $\R^n$ with compact support. By a direct computation it is easy to prove that $u\in W^{1,n}(\R^n)$ and moreover $|\nabla u|\in L^{n,q}$ for every $q>1$ (see \cite{Stein} for the definition of general Lorentz spaces).
If $\{q_k\}_{k\in\mathbb N}$ is an enumeration of the rational points in $\mathbb R^n$, we set $f_k(x)= u(x-q_k)$ and define 
\[f(x)=\sum_{k=0}^{+\infty}2^{-k}f_k(x).\]
Then such a map $f$, and its weak gradient, have the same summability properties of $u$, but it is essentially unbounded (thus discontinuous and not differentiable) around any point. Hence Theorem \ref{SobmapRR} is sharp also in the refined class of Lorentz spaces.
\end{remark}
\begin{remark} Note that Theorem \ref{SobmapRR} is not invertible, in the following sense: even though $\Omega$ is a bounded open set of an Euclidean space $\Rn$, it is well known that a function $f\in AC^n(\Omega)$ need not admit weak gradient $\nabla f$ in $L^{n,1}(\Omega)$ (see, for instance, \cite[Theorem 3.7]{Hencl2}).
\end{remark}
\bigskip
\section{Area formula and change of variables for absolutely continuous functions between Carnot groups}\label{AF}
An important property of Lipschitz functions between Euclidean spaces is the so called area formula, relating the Hausdorff measure of the image of a Lipschitz map (counted with multiplicity) to the integral of the Jacobian on the domain. Such a formula holds also in the setting of Carnot groups (Theorem \ref{areaformulalippansu}); we will extend this result to the case of absolutely continuous functions (see  Theorem \ref{areaformulaAC}).
\subsection{Area formula between Carnot groups} The content of this section mainly relies on \cite{Ma2001}.

An important tool in Euclidean geometric measure theory is the so-called \emph{area formula} for Lipschitz maps, which turns to be true also for Lipschitz  maps between Carnot groups.
\begin{definition}\label{hjacobiandef}[H-Jacobian] Given two Carnot groups $(\G_i,d_i)$, $i=1,2$, endowed with an invariant distance $d_i$, let $L:\,\G_1\to\G_2$ be a $H$- linear map. Let $Q$ denote the homogeneous dimension of $\G_1$. The \emph{horizontal jacobian} of $L$ is defined by
\[
J_Q(L):=\frac{\mathcal H_{d_2}^Q(L(B_1))}{\mathcal H_{d_1}^Q(B_1)}
\]
where $B_1$ denotes the unit open ball in $(\G_1,d_1)$ and for $i=1,2$, $\mathcal H_{d_i}^Q$  denotes the Hausdorff measure w.r.t. $d_i$.
\end{definition}
\begin{remark}In the previous definition it is possible to replace $B_1$ with any other measurable subset of $\G_1$ of positive finite measure, by the $H$-linearity of the map $L$. \end{remark}
The following Theorem is taken from \cite[Theorem 4.4]{Ma2001}.
\begin{theorem}{\rm (Area formula for Lipschitz maps)}\label{areaformulalippansu} Let $f:\,E\subset(\G_1, d_1)\to (\G_2, d_2)$  be a Lipschitz continuous function with $E$ measurable set. Then
\[
\int_{E}J_Q(d_Pf(x))\,d\mathcal H^Q_{d_1}(x)=\,\int_{\G_2}N(f,E,y)\,d\mathcal H^Q_{d_2}(y)
\]
where $N(f,E,\cdot)$ denotes the multiplicity function of $f$ relatively to $E$, that is $N(f,E,y)=\#\{f^{-1}(y)\cap E\}\in\N\cup\{+\infty\}$.
\end{theorem}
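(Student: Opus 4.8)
The plan is to carry the classical Federer--De Giorgi proof of the area formula over to the metric setting of Carnot groups, replacing the ordinary differential by the Pansu differential and linear maps by $H$-linear maps (cf.\ \cite{Ma2001}). Two structural facts are used throughout: both $\mathcal H^Q_{d_1}$ and $\mathcal H^Q_{d_2}$ are left Haar measures on $\G_1$ and $\G_2$ respectively, hence fixed multiples of the Lebesgue measure --- so P-differentiability $\mathcal L^n$-a.e.\ coincides with P-differentiability $\mathcal H^Q_{d_1}$-a.e.\ --- and a Lipschitz map satisfies $\mathcal H^Q_{d_2}(f(A))\le(\mathrm{Lip}\,f)^Q\,\mathcal H^Q_{d_1}(A)$, so it sends $\mathcal H^Q_{d_1}$-null sets to $\mathcal H^Q_{d_2}$-null sets; covering $E$ by countably many balls, we may also assume $\mathcal H^Q_{d_1}(E)<\infty$. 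I would first record the basic fact on $H$-linear maps: for $L\colon\G_1\to\G_2$ $H$-linear and $A\subset\G_1$ measurable,
\[ \mathcal H^Q_{d_2}(L(A))=J_Q(L)\,\mathcal H^Q_{d_1}(A). \]
Indeed $L_\#\mathcal H^Q_{d_1}$ is, by $H$-linearity, invariant under left translations by elements of the homogeneous subgroup $L(\G_1)$ and $\delta^2_\lambda$-homogeneous of degree $Q$; if $L$ is injective then $L(\G_1)$ has homogeneous dimension $Q$ and, by uniqueness of the Haar measure on it, $L_\#\mathcal H^Q_{d_1}$ is proportional to $\mathcal H^Q_{d_2}\res L(\G_1)$ with constant $J_Q(L)$ (evaluate on $B_1$); if $L$ is not injective both sides vanish, since $L(\G_1)$ has Hausdorff dimension $<Q$. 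In particular $J_Q(L)>0$ if and only if $L$ is injective.

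By Pansu's Theorem \ref{pansuthm} (applied after a Lipschitz extension of the real sections $d_2(z,f(\cdot))$ of $f$ and a Lebesgue density-point argument), $f$ is P-differentiable at $\mathcal H^Q_{d_1}$-a.e.\ point of $E$. Discard a $\mathcal H^Q_{d_1}$-null set $Z$ (so $\mathcal H^Q_{d_2}(f(Z))=0$) and write $E\setminus Z=E_0\sqcup E_+$, with $E_0=\{x:J_Q(d_Pf(x))=0\}$ and $E_+=\{x:J_Q(d_Pf(x))>0\}$. On $E_0$ the left integrand vanishes, and a Sard-type covering argument shows $\mathcal H^Q_{d_2}(f(E_0))=0$: near each point of $E_0$, $f$ is, on balls of controlled radius, $\varepsilon$-close to an $H$-linear map of arbitrarily small $J_Q$, so the image of such a ball has $\mathcal H^Q_{d_2}$-measure at most a small multiple of the measure of the ball, and a Vitali cover forces $f(E_0)$ to be null. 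Since $N(f,\cdot,y)$ is additive over disjoint sets, $\int_{\G_2}N(f,E_0,y)\,d\mathcal H^Q_{d_2}(y)=0=\int_{E_0}J_Q(d_Pf)\,d\mathcal H^Q_{d_1}$, and the proof reduces to $E_+$.

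The core of the argument is a Lusin-type decomposition $E_+=\bigsqcup_k E_k$: fixing a countable dense family of injective $H$-linear maps and running an Egorov-type argument on the uniform $o(\cdot)$ in the definition of P-differentiability, together with a further subdivision according to the value of $J_Q(d_Pf)$ and to the modulus of injectivity of $d_Pf$, one arranges that each $E_k$ has small diameter and that on $E_k$ the map $f$ is $(1+\varepsilon)$-biLipschitz, after a left translation, to a fixed injective $H$-linear map $L_k$, with $|J_Q(d_Pf(x))-J_Q(L_k)|<\varepsilon$ on $E_k$. Then $f|_{E_k}$ is injective, so $N(f,E_k,\cdot)=\mathbf 1_{f(E_k)}$, and combining the displayed $H$-linear identity with the fact that a $(1+\varepsilon)$-biLipschitz homeomorphism distorts $\mathcal H^Q$ by a factor in $[(1+\varepsilon)^{-Q},(1+\varepsilon)^{Q}]$ gives
\[ (1+\varepsilon)^{-Q}J_Q(L_k)\,\mathcal H^Q_{d_1}(E_k)\le\mathcal H^Q_{d_2}(f(E_k))\le(1+\varepsilon)^{Q}J_Q(L_k)\,\mathcal H^Q_{d_1}(E_k), \]
while $\int_{E_k}J_Q(d_Pf)\,d\mathcal H^Q_{d_1}$ lies in the same range up to the $\varepsilon$-error; refining the decomposition and letting $\varepsilon\to0$ gives $\mathcal H^Q_{d_2}(f(E_k))=\int_{E_k}J_Q(d_Pf)\,d\mathcal H^Q_{d_1}$. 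Summing over $k$, and using $\sum_k\mathbf 1_{f(E_k)}=N(f,E_+,\cdot)$ --- which is where the multiplicity enters, the sets $f(E_k)$ being in general not disjoint --- we get
\[ \int_{\G_2}N(f,E_+,y)\,d\mathcal H^Q_{d_2}(y)=\sum_k\mathcal H^Q_{d_2}(f(E_k))=\sum_k\int_{E_k}J_Q(d_Pf)\,d\mathcal H^Q_{d_1}=\int_{E_+}J_Q(d_Pf)\,d\mathcal H^Q_{d_1}, \]
and adding the trivial contributions of $E_0$ and $Z$ completes the proof; measurability of $y\mapsto N(f,E,y)$ follows from this countable decomposition into injectivity pieces. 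I expect the main obstacle to be the Lusin decomposition and its linearization estimate: one must upgrade the merely pointwise P-differentiability to uniform control of the error term on sets of positive measure while, simultaneously, discretizing the differential both in its $H$-linear ``direction'' and in its modulus of injectivity, so that $f$ is genuinely biLipschitz to a single $H$-linear model on each $E_k$. The Sard-type vanishing $\mathcal H^Q_{d_2}(f(E_0))=0$ is of the same flavour but technically easier.
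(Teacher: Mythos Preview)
The paper does not prove this theorem: it is quoted verbatim from \cite[Theorem 4.4]{Ma2001} (see the sentence preceding the statement and Remark~\ref{diffonmeasurable}). Your outline follows precisely Magnani's argument in that reference --- the Federer--De~Giorgi scheme transported to Carnot groups via Pansu differentiability, the $H$-linear identity $\mathcal H^Q_{d_2}(L(A))=J_Q(L)\,\mathcal H^Q_{d_1}(A)$, the Sard-type vanishing on $\{J_Q(d_Pf)=0\}$, and the linearization/biLipschitz decomposition on $\{J_Q(d_Pf)>0\}$ --- so there is no genuine methodological difference to report.

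One point deserves correction. Your justification of P-differentiability $\mathcal H^Q_{d_1}$-a.e.\ on the \emph{measurable} set $E$ via ``Lipschitz extension of the real sections $d_2(z,f(\cdot))$'' is not the right route: differentiability of all real sections does not, by itself, yield a \emph{group homomorphism} as Pansu differential, and Lipschitz extension of the $\G_2$-valued map $f$ to an open set is in general unavailable when $\G_2$ is non-abelian. The correct argument --- and the one Magnani actually uses, as the paper records in Remark~\ref{diffonmeasurable} --- is to prove P-differentiability directly at density points of $E$ by a localization/blow-up that does not require any extension. With that fix, your sketch is sound.
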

\begin{remark}\label{diffonmeasurable}
  The original definition of P-differentiability (cf. Definition \ref{pansudifferentiability}) requires $E$ to be an open subset of a Carnot group $\G_1$. However, it is possible to extend this notion to the set of density points of a general measurable set. In particular, the Pansu's differential, if it exists, is uniquely determined at a density point of $E$. Moreover, a generalization of Theorem \ref{pansuthm} holds: it has been proved by Magnani in \cite[Theorem 3.9]{Ma2001} that a Lipschitz map $f:E\to\G_2$ is $\mathcal H ^Q_{d_1}$ a.e. P-differentiable, where $E\subset \G_1$ is measurable, while $Q$ and $d_1$ denote the homogeneous dimension and the CC distance of $\G_1$ respectively. 
\end{remark}
Aim of this chapter is to extend Theorem \ref{areaformulalippansu} to the case of maps in $AC^Q(\Omega,\G_2)$, which are not necessarily Lipschitz continuous.
The following useful result is proved in \cite[Lemma 2.12]{LZ} in the general setting of metric measure spaces.

\begin{proposition}[Lusin's (N) condition for $ AC^Q$]\label{lusincond}
  Let $\G_1\equiv \Rn$ and $\G_2\equiv \Rm$ be Carnot groups and let $\Omega\subset \G_1$ be an open set. Let $f\in AC^Q(\Omega,\mathbb G_2)$, where $Q$ denotes the homogeneous dimension of $\G_1$. Then, for every measurable subset $E\subset \Omega$, it holds
  \[\mathcal L^n(E)=0\;\Longrightarrow \;\mathcal H^Q(f(E))=0.\]
\end{proposition}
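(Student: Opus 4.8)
The plan is to reduce the statement to the scalar ``(RR)''-type oscillation estimate that is the backbone of $Q$-absolute continuity, and then run a Vitali covering argument. First I would recall that, by Definition~\ref{abscont}, the hypothesis $f\in AC^Q(\Omega,\G_2)$ gives us a modulus of absolute continuity: for each $\eps>0$ there is $\delta>0$ such that any finite disjoint family of balls $U_1,\dots,U_k$ with $\sum_i |U_i|<\delta$ satisfies $\sum_i (\osc_{U_i}f)^Q<\eps$. Fix a measurable set $E\subset\Omega$ with $\mathcal L^n(E)=0$. We want $\mathcal H^Q(f(E))=0$, so fix $\eps>0$, take the corresponding $\delta$, and choose an open set $A$ with $E\subset A\subset\Omega$ and $\mathcal L^n(A)<\delta$ (possible by outer regularity of Lebesgue measure, shrinking $A$ to stay inside $\Omega$ if $E$ is compactly contained, or exhausting $\Omega$ by compact pieces otherwise).

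Next I would apply the Vitali covering theorem in the metric measure space $(\G,d_c,\mathcal L^n)$, which is doubling. The family of closed CC balls $B(x,r)\subset A$ with $x\in E$ is a fine cover of $E$; extract a countable disjoint subfamily $\{B(x_i,r_i)\}_i$ whose fivefold enlargements $B(x_i,5r_i)$ still cover $E$ (here one uses that $\mathcal L^n(E)=0$ so the ``leftover'' part is null and hence covered up to a null set, or the $5r$-covering version directly). Then $f(E)\subset\bigcup_i f(B(x_i,5r_i))$, and since $\diam_{\G_2} f(B(x_i,5r_i))=\osc_{B(x_i,5r_i)}f=:t_i$, each $f(B(x_i,5r_i))$ is contained in a $\G_2$-ball of radius $t_i$. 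Hence, by definition of Hausdorff measure (up to the usual normalisation constant $c$),
\[
\mathcal H^Q_{d_2}(f(E))\le c\sum_i t_i^Q = c\sum_i \big(\osc_{B(x_i,5r_i)}f\big)^Q .
\]
The balls $B(x_i,r_i)$ are disjoint and contained in $A$, so $\sum_i |B(x_i,r_i)|\le|A|<\delta$; by the doubling property $|B(x_i,5r_i)|\le C_d|B(x_i,r_i)|$, so the enlarged balls can be split into at most $N=N(C_d)$ subfamilies each of which is disjoint and has total measure $<C_d\delta$. Applying the absolute-continuity inequality (with $\delta$ chosen for the threshold $\eps/(cN)$ against a total mass $C_d\delta$, i.e.\ picking $\delta$ at the outset accordingly) to each subfamily yields $\sum_i (\osc_{B(x_i,5r_i)}f)^Q< N\cdot(\eps/(cN))=\eps/c$, whence $\mathcal H^Q_{d_2}(f(E))<\eps$. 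Letting $\eps\to0$ gives $\mathcal H^Q(f(E))=0$.

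The main technical point to be careful about is the bookkeeping between the disjointness of the original Vitali balls and the near-disjointness of their enlargements: a disjoint family of balls $B(x_i,r_i)$ does not give a disjoint family of $B(x_i,5r_i)$, but by a standard colouring argument (using the doubling/bounded-overlap property of the $5r_i$-balls centred at a disjoint family) one can partition the index set into finitely many classes on which disjointness does hold, and the absolute-continuity condition is quantifier-compatible with this split. An alternative, cleaner route avoiding the $5r$-enlargement is to use the basic Vitali covering theorem to get a disjoint subfamily $\{B(x_i,r_i)\}$ covering $\mathcal L^n$-almost all of $E$ (enough, since we only need to cover $E$ up to a null set whose image is covered by repeating the argument, or is itself null by monotonicity) — then the enlargement issue disappears and one applies the $AC^Q$ inequality directly to the disjoint family. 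Since the result is already available in \cite[Lemma 2.12]{LZ} in the abstract metric measure setting, the only substantive content here is checking that $(\G,d_c,\mathcal L^n)$ satisfies the doubling hypothesis needed there, which is classical, so I would likely just invoke \cite[Lemma 2.12]{LZ} and sketch the above as the underlying mechanism.
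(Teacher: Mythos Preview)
Your bottom line---just invoke \cite[Lemma 2.12]{LZ}---is exactly what the paper does: no independent proof is given, so in that sense your proposal agrees with the paper.

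That said, the self-contained argument you sketch has a genuine gap. The step ``the enlarged balls $B(x_i,5r_i)$ can be split into at most $N=N(C_d)$ subfamilies each of which is disjoint'' is false in general: disjointness of $\{B(x_i,r_i)\}$ together with doubling does \emph{not} force bounded overlap of $\{B(x_i,5r_i)\}$. In $\R$ take centres $c_k=1-2^{-k}$ and radii $r_k=2^{-k-2}$; the balls $B(c_k,r_k)$ are pairwise disjoint, yet $1\in B(c_k,5r_k)$ for every $k$, so the enlarged family has unbounded overlap and no finite colouring into disjoint subfamilies exists. Thus you cannot feed the $5r$-balls into the $AC^Q$ modulus. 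Your ``cleaner route'' via the measure-theoretic Vitali theorem is also circular: covering $\mathcal L^n$-almost all of a null set $E$ is vacuous, and saying the leftover null set has null image is precisely the statement you are proving.

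A correct self-contained argument in a doubling space replaces the $5r$-Vitali step by a Whitney-type decomposition (e.g.\ Christ's dyadic cubes) of the open set $A\supset E$: one gets pairwise disjoint ``cubes'' $Q_j$ with $\bigcup_j Q_j=A$, each sandwiched between comparable balls $cB_j\subset Q_j\subset C B_j$, and now the outer balls $CB_j$ genuinely have bounded overlap (because neighbouring Whitney cubes have comparable scales). At that point your colouring argument becomes legitimate and the $AC^Q$ inequality applies to each colour class. This is presumably the mechanism behind \cite[Lemma 2.12]{LZ}; if you want to present a proof rather than a citation, this is the missing ingredient.
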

The next result is inspired by Theorem 3.1.9 in \cite{federer} and adapted to our setting.
\begin{theorem}\label{lipschitzpartition}
Let $\Omega\subset \G_1\equiv \Rn$ be an open set and $A\subset \Omega$ be measurable. If $f:\Omega\to \G_2\equiv \Rm$ has the property that
\[\limsup_{y\to x}\frac{d_2(f(y),f(x))}{d_1(y,x)}<+\infty\qquad \text{for all $x\in A$,}\]
then there exists a countable measurable partition $\{A_k\}_k$ of $A$ such that $f_{\upharpoonright A_k}$ is Lipschitz continuous.
\end{theorem}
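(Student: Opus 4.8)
The plan is to reproduce the classical Federer-type decomposition (Theorem~3.1.9 in \cite{federer}), with the Euclidean distances replaced by the CC distances $d_1,d_2$. First I would introduce, for positive integers $i,j$, the sets
\[
A_{i,j}:=\Big\{x\in A:\ d_2(f(x),f(y))\le i\,d_1(x,y)\ \text{ for every }y\in\Omega\text{ with }d_1(x,y)<1/j\Big\},
\]
and show that $A=\bigcup_{i,j\ge 1}A_{i,j}$. Indeed, given $x\in A$, the hypothesis provides $L_x<\infty$ and $\delta_x>0$ with $d_2(f(y),f(x))\le (L_x+1)\,d_1(x,y)$ whenever $y\in\Omega$ and $0<d_1(x,y)<\delta_x$ (the case $y=x$ being trivial), so $x\in A_{i,j}$ as soon as $i\ge L_x+1$ and $1/j<\delta_x$.

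The second step is to make the Lipschitz constant uniform on small pieces. Exploiting the separability of $(\G_1,d_1)$, for each $j$ I would fix a countable cover $\{U(p_{j,l},\tfrac{1}{3j})\}_{l\in\N}$ of $\G_1$ by CC balls and set $A_{i,j,l}:=A_{i,j}\cap U(p_{j,l},\tfrac{1}{3j})$. For $x,x'\in A_{i,j,l}$ one has $d_1(x,x')\le d_1(x,p_{j,l})+d_1(p_{j,l},x')<\tfrac{2}{3j}<\tfrac1j$, and since $x'\in\Omega$, applying the definition of $A_{i,j}$ at the point $x$ with $y=x'$ gives $d_2(f(x),f(x'))\le i\,d_1(x,x')$; hence $f|_{A_{i,j,l}}$ is $i$-Lipschitz. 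The family $\{A_{i,j,l}\}_{i,j,l}$ is thus a countable cover of $A$ by sets on each of which $f$ is Lipschitz; enumerating it as $\{E_k\}_{k\in\N}$ and disjointifying ($A_1:=E_1$, $A_k:=E_k\setminus\bigcup_{h<k}E_h$ for $k\ge 2$) produces the required partition, each $f|_{A_k}$ being Lipschitz since $A_k\subset E_k$.

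The one step that needs genuine care -- and the one I expect to be the main obstacle -- is the measurability of the pieces $A_k$, equivalently of the sets $A_{i,j}$. When $f$ is continuous (which is the relevant situation for us, since the theorem will be applied to maps in $AC^Q(\Omega,\G_2)$) this is immediate: if $x_h\to x$ with $x_h\in A_{i,j}$ and $x\in A$, then any $y\in\Omega$ with $d_1(x,y)<1/j$ satisfies $d_1(x_h,y)<1/j$ for $h$ large, whence $d_2(f(x_h),f(y))\le i\,d_1(x_h,y)$, and letting $h\to\infty$ yields $x\in A_{i,j}$; so $A_{i,j}$ is relatively closed in the measurable set $A$ and therefore measurable. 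For a merely $\mathcal L^n$-measurable $f$ one would instead invoke a measurable selection/projection argument applied to the function $x\mapsto\sup\{[d_2(f(x),f(y))-i\,d_1(x,y)]^+:\ y\in\Omega,\ d_1(x,y)<1/j\}$, exactly as in Federer's proof of Theorem~3.1.9, to deduce that each $A_{i,j}$ is Lebesgue measurable. Beyond this bookkeeping the argument is elementary.
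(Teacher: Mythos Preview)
Your proposal is correct and follows the same Federer-type decomposition as the paper. The only notable difference is in how measurability is handled, and here the paper's argument is cleaner than both alternatives you suggest. The paper defines the analogous sets $C_j\subset\Omega$ (collapsing your two indices into one, $i=j$) and shows that each $C_j$ is \emph{closed in $\Omega$} for an arbitrary $f$, with no continuity assumption and no measurable-selection machinery. The trick is the triangle inequality applied at the approximating point: if $x_n\in C_j$, $x_n\to x\in\Omega$, and $y\in U_{d_1}(x,1/j)$, then for $n$ large both $x$ and $y$ lie in $U_{d_1}(x_n,1/j)$, so the defining inequality \emph{at $x_n$} gives
\[
d_2(f(y),f(x))\le d_2(f(y),f(x_n))+d_2(f(x_n),f(x))\le j\,d_1(y,x_n)+j\,d_1(x_n,x)\xrightarrow[n\to\infty]{} j\,d_1(y,x).
\]
No limit in $f$ is taken, so continuity of $f$ is irrelevant. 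The same one-line argument shows that your sets $\widetilde A_{i,j}:=\{x\in\Omega:\,d_2(f(x),f(y))\le i\,d_1(x,y)\ \forall\,y\in\Omega,\ d_1(x,y)<1/j\}$ are closed in $\Omega$, hence $A_{i,j}=\widetilde A_{i,j}\cap A$ is measurable; you may therefore drop both the continuity hypothesis and the appeal to a projection argument.
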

\begin{proof}
For every $j\in\N$, we define
\begin{equation}\label{defcj}C_j:=\{z\in\Omega: d_2(f(y),f(z))\leq jd_1(y,z)\quad\text{for every }y\in U_{d_1}(z,1/j)\}.\end{equation}
It follows that $A$ is contained in $\cup_j C_j$. 
Indeed, let $a\in A,$ so that
\[\limsup_{y\to a}\frac{d_2(f(y),f(a))}{d_1(y,a)}<+\infty.\]
This implies that there exists $j\in \N$ and $\bar r>0$ sufficiently small such that $\frac{d_2(f(y),f(a))}{d_1(y,a)}<j$ for every $y\in U_{d_1}(a,\bar r)$. Hence, up to increase $j$, we deduce that the previous inequality holds for every $y\in U_{d_1}(a,\frac{1}{j})$, which means that $a\in C_j$.\\ 
Notice that each $C_j$ is closed in $\Omega$. In fact, let $x_n\in C_j$ which converge to $x\in \Omega$ as $n\to\infty$ and let $y\in U_{d_1}(x,\frac{1}{j})$. If $n\in\N$ is chosen sufficiently large, we can assume $\{x, y\}\subset U_{d_1}(x_n,1/j)$. Hence
\[\begin{split}d_2(f(y),f(x))&\leq d_2(f(y),f(x_n))+d_2(f(x_n),f(x))\\
&\leq jd_1(y,x_n)+jd_1(x_n,x)\end{split}\]
and, passing to the limit as $n\to\infty$, we get $d_2(f(y),f(x))\leq jd_1(y,x)$, so that $x\in C_j$.\\
Exploiting the Lindel\"of property of $\G_1\equiv \Rn$, we can write $C_j$ as a countable union of closed sets $C_j^i$, for $ i\in \N$, each of which of diameter less than $\frac{1}{j}$. If we relabel $\{C_k\}_k=\{C_j^i\}_{j,i}$, it follows that $f_{\upharpoonright C_k}$ is Lipschitz by \eqref{defcj}. To conclude, define $A_k:=C_k\cap A$ which are still measurable and notice that we can suppose the sets $A_k$ to be disjoint (hence a partition of $A$) by the usual differences procedure.
\end{proof}
We now combine Theorem \ref{lipschitzpartition} and Theorem \ref{areaformulalippansu} to get an area formula for absolutely continuous maps between Carnot groups.
\begin{theorem}\label{areaformulaAC}
  Let $\Omega\subset \G_1$ be open and $f\in AC^Q(\Omega,\G_2)$. For every measurable subset $E$ of $\Omega$, 
  we have
  \begin{equation}\label{areaformulaAC1}\int_EJ_Q(d_Pf(x))\,d\mathcal H^Q_{d_1}(x)=\int_{\G_2} N(f,E,y)\,d\mathcal H^Q_{d_2}(y)\end{equation}
  where $J_Q$ is the Jacobian introduced in Definition \ref{hjacobiandef}, while $N(f,E,\cdot)$ denotes the multiplicity function of $f$ relatively to $E$.
\end{theorem}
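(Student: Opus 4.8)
The plan is to reduce the statement to the already known area formula for Lipschitz maps (Theorem~\ref{areaformulalippansu}) by decomposing the domain into countably many pieces on which $f$ is Lipschitz, in the spirit of Federer's classical argument, and then to sum the Lipschitz area formula over the pieces; the Lusin (N) condition of Proposition~\ref{lusincond} will be used to discard the part of the domain where $f$ is not pointwise Lipschitz.

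First I would set $A:=\{x\in\Omega:\ {\rm Lip}(f)(x)<+\infty\}$, with ${\rm Lip}(f)$ as in \eqref{puLcf}. Since $f\in AC^Q(\Omega,\G_2)\subset BV^Q_{\rm loc}(\Omega,\G_2)$, Theorem~\ref{diffthm} gives ${\rm Lip}(f)\in L^Q_{\rm loc}(\Omega)$, so $\mathcal L^n(\Omega\setminus A)=0$, and $f$ is P-differentiable at $\mathcal L^n$-a.e.\ point of $\Omega$, so $J_Q(d_Pf(x))$ is defined for $\mathcal H^Q_{d_1}$-a.e.\ $x$ (recall that in a Carnot group $\mathcal H^Q_{d_1}$ is a fixed multiple of $\mathcal L^n$, so null sets and ``a.e.'' are unambiguous). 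Writing $E':=E\cap A$ and $E_0:=E\setminus A$, I would dispose of $E_0$ at once: it is $\mathcal L^n$-null, hence by Proposition~\ref{lusincond} $\mathcal H^Q_{d_2}(f(E_0))=0$, so $N(f,E_0,\cdot)=0$ $\mathcal H^Q_{d_2}$-a.e.\ and therefore $N(f,E,\cdot)=N(f,E',\cdot)$ $\mathcal H^Q_{d_2}$-a.e.; as the left-hand side of \eqref{areaformulaAC1} is also unchanged on passing from $E$ to $E'$, it suffices to prove the formula for $E'$.

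On $A$ the hypothesis of Theorem~\ref{lipschitzpartition} holds by the very definition of $A$, so applying that theorem and intersecting the resulting partition with $E$ yields a countable measurable partition $\{E_k\}_k$ of $E'$ with each $f_{\upharpoonright E_k}$ Lipschitz continuous. For each $k$ I would then apply Theorem~\ref{areaformulalippansu} to $f_{\upharpoonright E_k}\colon E_k\to\G_2$. The one point requiring care is that the Pansu differential of the Lipschitz restriction $f_{\upharpoonright E_k}$ (which exists $\mathcal H^Q_{d_1}$-a.e.\ on $E_k$ by Remark~\ref{diffonmeasurable}) agrees a.e.\ with the differential $d_Pf$ supplied by Theorem~\ref{diffthm}: at any density point $x$ of $E_k$ at which $f$ is P-differentiable as a map on $\Omega$, the estimate in Definition~\ref{pansudifferentiability}, restricted to $x'\in E_k$, exhibits $d_Pf(x)$ as a Pansu differential of $f_{\upharpoonright E_k}$ at $x$, hence \emph{the} one by uniqueness at density points; and both conditions hold for $\mathcal H^Q_{d_1}$-a.e.\ $x\in E_k$. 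Since moreover $N(f_{\upharpoonright E_k},E_k,y)=N(f,E_k,y)$, Theorem~\ref{areaformulalippansu} gives \eqref{areaformulaAC1} with $E$ replaced by $E_k$.

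Finally I would sum over $k$: the left-hand sides add to $\int_{E'}J_Q(d_Pf)\,d\mathcal H^Q_{d_1}$ because $\{E_k\}_k$ partitions $E'$, while on the right-hand side $\sum_k N(f,E_k,y)=N(f,E',y)$ for every $y$, so the monotone convergence theorem gives $\sum_k\int_{\G_2}N(f,E_k,\cdot)\,d\mathcal H^Q_{d_2}=\int_{\G_2}N(f,E',\cdot)\,d\mathcal H^Q_{d_2}$; combining with the reduction of the second paragraph yields \eqref{areaformulaAC1}. The only genuinely delicate steps are the bookkeeping with the Lebesgue-null set $E_0$ — where Proposition~\ref{lusincond} is exactly what kills its (a priori infinite-multiplicity) contribution to the right-hand side — and the almost-everywhere identification of the Pansu differentials of the Lipschitz pieces with $d_Pf$; the rest is a routine assembly of the quoted results.
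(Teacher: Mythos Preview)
Your argument is correct and follows essentially the same route as the paper's proof: use Theorem~\ref{diffthm} to get ${\rm Lip}(f)<\infty$ a.e., invoke Theorem~\ref{lipschitzpartition} to obtain a countable Lipschitz decomposition, apply the Lipschitz area formula (Theorem~\ref{areaformulalippansu}) piecewise with the differentials identified at density points via Remark~\ref{diffonmeasurable}, discard the null remainder using Proposition~\ref{lusincond}, and sum by monotone convergence. The only cosmetic difference is that you peel off the null set $E_0$ at the start, whereas the paper carries it along as a ``$k=0$'' piece of the partition and disposes of it at the summation stage.
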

\begin{proof}
 We note that, by Theorem \ref{diffthm}, Lip$(f)$ is finite almost everywhere, which means
 \[\text{Lip}(f)(x):=\limsup_{y\to x}\frac{d_2(f(y),f(x))}{d_1(y,x)}<+\infty \quad for\,\,\mathcal L^n-a.e.\,x\in\Omega.\]
 We can therefore apply Theorem \ref{lipschitzpartition} and write $\Omega$ as a disjoint union of measurable subsets\[\Omega=A_0\cup \bigcup _{k=1}^{+\infty}A_k,\]
 with $\mathcal L^n(A_0)=0$ and $f_{\upharpoonright A_k}$ Lipschitz.\\ Moreover we know, again by Theorem \ref{diffthm}, that $f$ is P-differentiable almost everywhere. In the negligible set where $f$ does not satisfy this condition, we can define $d_Pf(x)$ to be for instance the zero map.\\
 Let now $E$ be a measurable subset of $\Omega$ and denote $E_k:=E\cap A_k$. Applying Theorem \ref{areaformulalippansu} we get, for every $k\geq 1$,
 \[\int_{E_k}J_Q(d_Pf_{\upharpoonright E_k}(x))\,d\mathcal H^Q_{d_1}(x)=\int_{\G_2} N(f,E_k,y)\,d\mathcal H^Q_{d_2}(y).\]
 Moreover this identity trivially holds for $k=0$ in view of Lemma \ref{lusincond}. Note that, if $x$ is a density point of $E_k$ and $f:\Omega\to\G_2$ is P-differentiable at $x\in E_k$, then also $f_{\upharpoonright E_k}$ is P-differentiable at $x$ in the sense of Remark \ref{diffonmeasurable} and the two differentials coincide. Hence, summing over $k\in\N$ we find, by a simple application of monotone convergence,
\[\begin{split}&\int_{E}J_Q(d_Pf(x))\,d\mathcal H^Q_{d_1}(x)=\sum_{k=0}^{+\infty}\int_{E_k}J_Q(d_Pf_{\upharpoonright E_k}(x))\,d\mathcal H^Q_{d_1}(x)\\ &=\sum_{k=0}^{+\infty}\int_{\G_2} N(f,E_k,y)\,d\mathcal H^Q_{d_2}(y)=\int_{\G_2}N(f,E,y)\,d\mathcal H^Q_{d_2}(y).\qedhere\end{split}\]
 \end{proof}
 

Through standard arguments of approximation by simple functions, it is easy to extend formula \eqref{areaformulaAC1} to a more general form:
\begin{corollary}\label{areaformula2}
 If $f\in AC^Q(\Omega,\G_2)$, then for any measurable subset $E$ of $\Omega$ and each summable function $u: E\to\R$, it holds
 \[\int_E u(x)J_Q(d_Pf(x))\,d\mathcal H^Q_{d_1}(x) = \int_{\G_2}\sum_{x\in f^{-1}(y)}u(x)\, d\mathcal H^Q_{d_2}(y).\]
\end{corollary}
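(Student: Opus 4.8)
The plan is to run the classical approximation argument: establish the identity first for indicator functions, extend it by additivity to simple functions, then by monotone convergence to nonnegative measurable functions, and finally split a summable $u$ into its positive and negative parts. Throughout, I interpret the sum $\sum_{x\in f^{-1}(y)}u(x)$ as running over $f^{-1}(y)\cap E$ (equivalently, $u$ is extended by zero outside $E$).

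First I would take $u=\chi_A$ for a measurable set $A\subset E$. For each $y\in\G_2$ one then has $\sum_{x\in f^{-1}(y)}u(x)=\#\big(f^{-1}(y)\cap A\big)=N(f,A,y)$, while $\int_E\chi_A(x)\,J_Q(d_Pf(x))\,d\mathcal H^Q_{d_1}(x)=\int_AJ_Q(d_Pf(x))\,d\mathcal H^Q_{d_1}(x)$. Hence the asserted identity for $u=\chi_A$ is precisely \eqref{areaformulaAC1} applied with the measurable set $A$ in place of $E$, which holds by Theorem \ref{areaformulaAC}. Both sides of the formula are positively homogeneous and additive in $u$ (on the right, $\sum_{x\in f^{-1}(y)}(\sum_j c_j\chi_{A_j})(x)=\sum_j c_j N(f,A_j,y)$, a sum of nonnegative terms, and integration is additive), so the identity extends to every nonnegative simple function $u=\sum_{j=1}^k c_j\chi_{A_j}$ with $c_j\ge0$ and $A_j\subset E$ measurable; in particular $y\mapsto\sum_{x\in f^{-1}(y)}u(x)$ is $\mathcal H^Q_{d_2}$-measurable for such $u$, since each $N(f,A_j,\cdot)$ is measurable by the framework of Theorem \ref{areaformulalippansu}.

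Next, for a nonnegative $\mathcal H^Q_{d_1}$-measurable function $u$ on $E$ I would choose simple functions $0\le u_\ell\uparrow u$ pointwise. The left-hand side passes to the limit by monotone convergence. On the right-hand side, for each fixed $y$ the partial sums $\sum_{x\in f^{-1}(y)}u_\ell(x)$ are nonnegative and increase to $\sum_{x\in f^{-1}(y)}u(x)$ (each summand increases); thus $y\mapsto\sum_{x\in f^{-1}(y)}u(x)$ is measurable as an increasing limit of measurable functions, and a second application of monotone convergence yields the identity for $u$. Finally, for a summable $u$ I would apply the nonnegative case to $|u|$, obtaining
\[
\int_{\G_2}\sum_{x\in f^{-1}(y)}|u(x)|\,d\mathcal H^Q_{d_2}(y)=\int_E|u(x)|\,J_Q(d_Pf(x))\,d\mathcal H^Q_{d_1}(x)<+\infty\,,
\]
so that for $\mathcal H^Q_{d_2}$-a.e. $y$ the series $\sum_{x\in f^{-1}(y)}u(x)$ converges absolutely; writing $u=u^+-u^-$ with $u^\pm\ge0$ summable, I would subtract the two (finite) identities for $u^+$ and $u^-$ to conclude.

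There is no genuinely hard step here. The only points requiring a little care are the measurability of $y\mapsto\sum_{x\in f^{-1}(y)}u(x)$, which comes for free along the approximation from the measurability of the multiplicity functions, and the interchanges of summation/integration with the limit, justified by monotone convergence in the nonnegative case and by the absolute convergence established above in the general case.
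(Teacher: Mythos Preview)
Your proposal is correct and is exactly what the paper does: the paper's entire proof is the single sentence ``Through standard arguments of approximation by simple functions, it is easy to extend formula \eqref{areaformulaAC1} to a more general form,'' and you have spelled out precisely that standard argument. The only small point to watch is the unjustified ``$<+\infty$'' in your final step: mere summability of $u$ on $E$ does not by itself force $\int_E|u|\,J_Q(d_Pf)\,d\mathcal H^Q_{d_1}<\infty$, so ``summable'' should be read as making the left-hand side absolutely integrable (the paper does not comment on this either).
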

 As a straightforward consequence of Theorem \ref{SobmapRR} and Theorem \ref{areaformulaAC}, we get the following
 \begin{corollary}
     Let $\Omega\subset\G_1$ be an open set and let $f\in W^{1,1}_{\rm loc}(\Omega,\G_2)$ such that the function $g$ in \eqref{estimnablauz} belongs to $ L^{Q,1}(\Omega)$. Then a suitable representative of $f$ satisfies
      \[\int_EJ_Q(d_Pf(x))\,d\mathcal H^Q_{d_1}(x)=\int_{\G_2} N(f,E,y)\,d\mathcal H^Q_{d_2}(y)\]
      for every measurable subset $E\subset\Omega$. Moreover, for every $u:E\to \R$ summable,
       \[\int_E u(x)J_Q(d_Pf(x))\,d\mathcal H^Q_{d_1}(x) = \int_{\G_2}\sum_{x\in f^{-1}(y)}u(x)\, d\mathcal H^Q_{d_2}(y).\]
 \end{corollary}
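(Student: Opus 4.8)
The plan is to obtain this statement by simply chaining the two main results of Sections~\ref{Steinresult} and~\ref{AF}. First I would invoke Theorem~\ref{SobmapRR}: since $f\in W^{1,1}_{\rm loc}(\Omega,\G_2)$ and the function $g$ from \eqref{estimnablauz} lies in $L^{Q,1}(\Omega)$, that theorem produces a representative $\tilde f$ of $f$ which is continuous, satisfies the condition (RR) with an explicit weight, and in particular belongs to $AC^Q(\Omega,\G_2)$. Once we are inside the class $AC^Q(\Omega,\G_2)$, all the real work has already been carried out in the previous section.

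The second step is then immediate. Applying Theorem~\ref{areaformulaAC} to $\tilde f\in AC^Q(\Omega,\G_2)$ gives, for every measurable $E\subset\Omega$,
\[
\int_E J_Q(d_P\tilde f(x))\,d\mathcal H^Q_{d_1}(x)=\int_{\G_2}N(\tilde f,E,y)\,d\mathcal H^Q_{d_2}(y),
\]
which is exactly the first asserted identity for the chosen representative. The generalized change-of-variables formula follows at once by applying Corollary~\ref{areaformula2} to the same representative $\tilde f$ and an arbitrary summable $u:E\to\R$.

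I do not expect any genuine obstacle: the statement is a pure corollary, obtained by composing Theorem~\ref{SobmapRR} (to land in $AC^Q$) with Theorem~\ref{areaformulaAC} and Corollary~\ref{areaformula2} (to run the area formula and its weighted version). The only point deserving a word of care is that the area formula, the Pansu differential $d_P\tilde f$, the multiplicity function $N(\tilde f,E,\cdot)$, and Lusin's (N) condition (Proposition~\ref{lusincond}) must all be read for the specific continuous representative furnished by Theorem~\ref{SobmapRR}; but this is precisely the ``suitable representative'' appearing in the statement, so nothing beyond quoting the two theorems needs to be verified.
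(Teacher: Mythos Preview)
Your proposal is correct and matches the paper's approach exactly: the paper states the corollary as ``a straightforward consequence of Theorem~\ref{SobmapRR} and Theorem~\ref{areaformulaAC}'' without giving any further proof, which is precisely the two-step chaining you describe. Your remark about reading the differential, multiplicity, and Lusin's condition for the specific continuous representative is a sensible clarification, but no additional argument is needed.
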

\section{Appendix: Orlicz-Sobolev spaces on Carnot groups}\label{appendix6}

Orlicz spaces arise naturally in many applications in real, harmonic or functional analysis. Such spaces were introduced by the Polish mathematician Władysław Orlicz in 1932 and are generalizations of the classical $L^p$ spaces. Indeed, the role played by the function $t^p$ is replaced by more general convex maps, called $N$-functions.\\
We recall now the main definitions and basic properties of Orlicz spaces: see for instance \cite{adams},\cite{kranosel} or \cite{luxemburg} for an exhaustive treatment. 
\begin{definition}
    A map $A:[0,+\infty)\to[0,+\infty)$ is called an {\em N-function} if it is convex, vanishes only at $0$ and has the following limit properties:
    \[\lim_{t\to 0^+}\frac{A(t)}{t}=0,\qquad \lim_{t\to\infty}\frac{A(t)}{t}=+\infty.\]
\end{definition}
It is easy to verify that a map satisfying these properties is actually continuous on $[0,+\infty)$ and strictly increasing.
Given an $N$-function $A:[0,+\infty)\to[0,+\infty)$, its \textit{Young conjugate} $\widetilde A$ is defined as the $N$-function
\begin{equation*}\label{young}\widetilde A(t):=\sup_{s>0}\,\{st-A(s)\}.\end{equation*}
Examples of $N$-functions and associated conjugates are for instance
\[\begin{split}A(t)=\frac{t^p}{p},&\qquad \widetilde A(t)=\frac{t^{p'}}{p'}\qquad\left(1<p<+\infty,\quad p'=\frac{p}{p-1}\right);\\ 
A(t)=e^t-&t-1, \qquad \widetilde A(t)=(1+t)\log(1+t)-t.\end{split}\]
\begin{definition}
    Let $\Omega\subset \mathbb R^n$ be measurable and let $A$ be an $N-$function. For every measurable map $u:\Omega\to\mathbb R$, we define its {\em Luxemburg norm} as
    \[\|u\|_{L^A(\Omega)}:=\inf\left\{\lambda>0: \int_\Omega A\left(\frac{|u(x)|}{\lambda}\right)\,dx\leq 1\right\}.\]
    The {\em Orlicz space} associated to the $N$- function $A$ is defined as
    \[L^A(\Omega):=\{u:\Omega\to\mathbb R \text{ measurable}: \|u\|_{L^A(\Omega)}<+\infty\}.\]
    
\end{definition}
Equipped with $\|\cdot\|_{L^A(\Omega)}$, the Orlicz space $L^A(\Omega)$ is a Banach space. Moreover, 
an interesting relation between an $N-$function and its Young conjugate is expressed by the following Hölder-type inequality:
\[\int_\Omega u(x)v(x)\,dx\leq 2\|u\|_{L^A(\Omega)}\|v\|_{L^{\widetilde A}(\Omega)}.\]
Notice that, when considering $A(t):=t^p$, for $1<p<+\infty$, we get that 
$L^A(\Omega)$ coincides with the classical $L^p(\Omega)$ space. Moreover in this case we get $\|\cdot\|_{L^A(\Omega)}=p^{-1/p}\|\cdot\|_{L^p(\Omega)}$. 
\\\\
In order to establish an inclusion relation similar to the classical one for Lebesgue spaces on domain with finite measure, we introduce two new concepts: if $A$ and $B$ are $N$-functions, we will say that \textit{$B$ dominates $A$ globally} if there exists a positive constant $k$ such that $A(t)\leq B(kt)$ for all $t\geq 0$. If this estimate holds only for large values of $t$, we will say that \textit{$B$ dominates $A$ near infinity}.
An interesting result is that the embedding $L^B(\Omega)\hookrightarrow L^A(\Omega)$ holds if and only if either $B$ dominates $A$ globally or $B$ dominates $A$ near infinity and $|\Omega|<+\infty.$
Hence functions which are globally equivalent, i.e. each one dominates the other globally, determine the same Orlicz space (and clearly the same holds if we require that they are equivalent near infinity and $|\Omega|<+\infty$).\\
Note that, if $1<p<q<+\infty$, then the map $B(t):=t^q$ dominates the function $A(t):=t^p$ near infinity, so that $L^q(\Omega)\subset L^p(\Omega)$ if $\Omega$ has finite measure.
\\\\
In the same spirit of Sobolev space, it is now straightforward to define Orlicz-Sobolev functions by means of Orlicz integrability conditions on the weak derivatives. We give this definition directly in the general setting of Carnot groups:
\begin{definition}\label{orlsobspace}
Let $\Omega\subset\mathbb G\equiv \mathbb R^n$ be open and let $A$ be an $N$-function.
We define the {\em Orlicz-Sobolev space} 
\[W^{1,A}_\mathbb G(\Omega):=\{u\in L^A(\Omega): \exists X_iu\in L^A(\Omega)\quad i=1,\dots,m_1\},\]
where $X_i u$ are the distributional derivatives of $u$ computed along the generating vector fields of the group.
We also set 
\[\|u\|_{W^{1,A}_\mathbb G(\Omega)}:=\|u\|_{L^A(\Omega)}+\|\nabla_\mathbb Gu\|_{L^A(\Omega)}.\]
\end{definition}
Equipped with such a norm, it is easy to see that $(W^{1,A}_\G(\Omega),\|\cdot\|_{W^{1,A}_\mathbb G(\Omega)})$ is a Banach space. If $\G$ is the Euclidean $\R^n$, we will use the notation $W^{1,A}(\Omega)$.
\\\\
The classical embedding theorems for Sobolev spaces have been extended also to Orlicz-Sobolev spaces in the Euclidean setting. In particular, the condition $p>n$ has to be replaced by a more general requirement on the growth of the $N$-function which induces the space. In \cite{adams} or \cite{donaldson} it has been proven that, if $\Omega\subset\R^n$ is sufficiently regular and $A$ is an $N$-function, then $W^{1,A}(\Omega)$ is embedded in the space of continuous and bounded maps provided that the condition
\begin{equation}\label{adamsassum}\int_1^{+\infty}\frac{A^{-1}(t)}{t^{1+1/n}}\,dt<+\infty\end{equation}
is satisfied. 
In the case of $A(t)=t^p$, we recover the classical embedding theorem for $p>n$. 
However, as remarked in \cite{cianchi}, this condition is not sharp, since it can be improved by requiring 
\begin{equation}\label{sharp}\int_1^{+\infty}\frac{\widetilde A(t)}{t^{1+n'}}\,dt<+\infty,\end{equation}
which is a weaker assumption than \eqref{adamsassum} (see \cite{talenti2}). 
Indeed, G. Talenti proved in the same work that \eqref{sharp} is a sufficient condition for boundedness (see also \cite{cianchi2}), while A. Cianchi showed in \cite{cianchi} that it is also the sharp condition for continuity. Moreover the same author proved in \cite{AlbCia} that this condition is also the correct one to ensure differentiability almost everywhere.\\
The strategy of Cianchi relies on rearrangement tools, while in the following we will provide an alternative way to extend this result in the setting of Carnot groups, following the approach introduced by Kauhanen, Koskela and Malý  in \cite{KKM}, in the same spirit of Theorem \ref{SobmapRR}.
\\
In order to reach this result, we need first to extend Definition \ref{orlsobspace} to the case of group-valued maps, inspired by Definition \ref{SobspacesvaluedCarnot}. This generalization is not straightforward as for Euclidean spaces (in which it suffices to work in components), since a Carnot group cannot be seen in general as direct product of subgroups.
\begin{definition}
    Let $\mathbb G_1\equiv \mathbb R^n, \mathbb G_2\equiv \mathbb R^m$ be Carnot groups. Let $\Omega\subset\mathbb G_1$ be open and let $A$ be an $N$-function.
    \begin{enumerate}
    \item [(i)]We say that a function $u:\Omega\to\mathbb G_2$ belongs to the space $L^A(\Omega,\mathbb G_2)$ if $u:\Omega\subset \mathbb R^n\to\mathbb R^m$ is measurable and the map $u_0:\Omega\to\mathbb R$ defined by $u_0(x)= d_2(0,u(x))$ is in $L^A(\Omega)$;
    \item[(ii)] We say that a function $u:\Omega\to\mathbb G_2$ belongs to the Orlicz-Sobolev space $W^{1,A}(\Omega,\mathbb G_2)$ if
    \begin{itemize}
        \item $u\in L^A(\Omega,\mathbb G_2)$;
        \item for all $z\in\mathbb G_2$, the functions $u_z:\Omega\to\mathbb R$ defined by
        \begin{equation}\label{sectionss}u_z(x):=d_2(z,u(x))\quad \text{if }x\in\Omega\end{equation}
        are in $W^{1,A}_{\mathbb G_1,\rm loc}(\Omega)$ (according to Definition \ref{orlsobspace});
        \item There exists a nonnegative function $g\in L^A(\Omega)$ (which does not depend on $z$) such that
        \begin{equation*}\label{uppgrad2}|\nabla_{\mathbb G_1}u_z|\leq g \quad\text{a.e. in }\Omega, \;\; \text{for each }z\in\mathbb G_2.\end{equation*}
    \end{itemize}
    \end{enumerate}
\end{definition}

We provide now the natural extension of Theorem \ref{p>Qdiff} and Theorem \ref{SobmapRR} to the case of Orlicz-Sobolev maps: under a sufficient growth condition on $A$, functions in the space $W^{1,A}(\Omega,\G_2)$ are $Q$-absolutely continuous and so enjoy nice regularity properties. 

The following Lemma represents the key step in order to apply the strategy proposed by the authors in \cite{KKM} also to the case of Orlicz-Sobolev spaces.
\begin{lemma}\label{existenceofphi}
 Let $\Omega\subset \mathbb G\equiv \mathbb R^n$ be open with $\mathcal L^n(\Omega)<+\infty$.
 Let $A$ be an N-function satisfying
 \begin{equation}\label{integrability}\int_1^{+\infty}\left(\frac{t}{A(t)}\right)^\frac{1}{Q-1}\,dt<+\infty,\end{equation}
 and let $g$ be a nonnegative function in the space $ L^A(\Omega)$.
 Then there exists a nonincreasing function $\varphi:(0,+\infty)\to(0,+\infty)$ such that
 \begin{equation}\label{firstcond}
 \int_\Omega F_\varphi(g(x))\,dx<+\infty
 \end{equation}
 and
 \begin{equation}\label{secondcond}
     \int_0^{+\infty}\varphi^{1/Q}(t)\,dt<+\infty.
 \end{equation}
\end{lemma}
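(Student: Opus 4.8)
The plan is to construct the function $\varphi$ explicitly out of $A$ and the scaling parameter coming from the Luxemburg norm of $g$. Since $g\in L^A(\Omega)$ there exists $\lambda>0$ with $\int_\Omega A(g/\lambda)\,dx\le 1$, and replacing $\lambda$ by $\max\{\lambda,1\}$ only decreases this integral (as $A$ is increasing), so I may assume $\lambda\ge 1$. I would then set
\[
\varphi(s):=
\begin{cases}
(\lambda/A(1))^{Q/(Q-1)} & 0<s\le\lambda,\\
(s/A(s/\lambda))^{Q/(Q-1)} & s>\lambda.
\end{cases}
\]
The two expressions agree at $s=\lambda$, $\varphi>0$ on $(0,\infty)$, and $\varphi$ is nonincreasing: it is constant on $(0,\lambda]$, while on $(\lambda,\infty)$ one substitutes $u=s/\lambda$ and uses that convexity of $A$ together with $A(0)=0$ makes $u\mapsto A(u)/u$ nondecreasing, hence $u\mapsto u/A(u)$ nonincreasing.

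Next I would verify \eqref{secondcond}. On $(\lambda,\infty)$ one has $\varphi^{1/Q}(s)=(s/A(s/\lambda))^{1/(Q-1)}$, and the substitution $u=s/\lambda$ gives $\int_\lambda^{+\infty}\varphi^{1/Q}(s)\,ds=\lambda^{Q/(Q-1)}\int_1^{+\infty}(u/A(u))^{1/(Q-1)}\,du$, which is finite exactly by hypothesis \eqref{integrability}; the contribution of $(0,\lambda]$ is a finite constant times $\lambda$. For \eqref{firstcond} I would split $\Omega=\{g\le\lambda\}\cup\{g>\lambda\}$. On $\{g\le\lambda\}$ the value $\varphi(g(x))$ is the constant $(\lambda/A(1))^{Q/(Q-1)}$, so $F_\varphi(g(x))=g(x)\,\varphi(g(x))^{(1-Q)/Q}=g(x)\,A(1)/\lambda\le A(1)$, and $\int_{\{g\le\lambda\}}F_\varphi(g)\,dx\le A(1)\,\mathcal L^n(\Omega)<+\infty$. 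On $\{g>\lambda\}$ the exponent $Q/(Q-1)$ is chosen precisely so that $F_\varphi(g(x))=g(x)/\varphi(g(x))^{(Q-1)/Q}=A(g(x)/\lambda)$, whence $\int_{\{g>\lambda\}}F_\varphi(g)\,dx\le\int_\Omega A(g/\lambda)\,dx\le 1$. Adding the two pieces gives \eqref{firstcond}.

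The one point I would flag as the main obstacle is that $g\in L^A(\Omega)$ does \emph{not} in general imply $\int_\Omega A(g)\,dx<+\infty$; all one gets is $\int_\Omega A(g/\lambda)\,dx\le 1$ for some $\lambda\ge 1$. This is why $\varphi$ must be built around $A(\cdot/\lambda)$ rather than $A$ itself, and one has to check that this rescaling does not spoil \eqref{secondcond} — which it does not, since it only produces the harmless factor $\lambda^{Q/(Q-1)}$ in front of the integral already controlled by \eqref{integrability}. Everything else reduces to the monotonicity remark above and two elementary one-variable integral estimates, so no further difficulties are expected; the assumption $\mathcal L^n(\Omega)<+\infty$ is used only to control the part of $\Omega$ where $g\le\lambda$. (Alternatively, one could phrase the argument as showing $L^A(\Omega)\subset L^{Q,1}(\Omega)$ under \eqref{integrability} and then invoke Theorem \ref{charLQ1}, but the direct construction above is cleaner and produces $\varphi$ explicitly.)
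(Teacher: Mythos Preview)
Your construction is correct and follows essentially the same route as the paper: set $\varphi$ to be a suitable power of $t/A(t/\lambda)$ so that $F_\varphi(g)=A(g/\lambda)$ on the relevant range, then read off both conditions. The only minor difference is in handling small $t$: the paper replaces $A$ by an $N$-function equivalent near infinity so that also $\int_0^1 (t/A(t))^{1/(Q-1)}\,dt<\infty$ and then uses the single formula $\varphi(t)=(A(\bar\lambda t)/t)^{Q/(1-Q)}$ on all of $(0,\infty)$, whereas you truncate $\varphi$ to a constant on $(0,\lambda]$ and invoke $\mathcal L^n(\Omega)<\infty$ directly on $\{g\le\lambda\}$; both devices serve the same purpose and the argument is otherwise identical.
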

\begin{proof}
Notice first that we can assume that
\begin{equation}\label{integrability2}
    \int_0^1\left(\frac{t}{A(t)}\right)^\frac{1}{Q-1}\,dt<+\infty,
\end{equation}
since it is always possible to replace $A$ with another $N-$function which is equivalent near infinity (so that it determines the same Orlicz space) and decays sufficiently slow to 0.\\
    Since $g\in L^A(\Omega)$, there exists for sure $\bar\lambda>0$ such that $\int_\Omega A(\bar\lambda g(x))\,dx<+\infty$.
    Define $\varphi:(0,+\infty)\to(0,+\infty)$ as
    \[\varphi(t):=\left(\frac{A(\bar\lambda t)}{t}\right)^\frac{Q}{1-Q}.\]
    By definition $\varphi$ is positive and also nonincreasing.  In fact, by convexity of $A$, 
    we know that $A(\lambda t_1+(1-\lambda)t_2)\leq \lambda A(t_1)+(1-\lambda)A(t_2)$ for every $\lambda\in(0,1),\, t_1,t_2\in[0,+\infty).$
    Consider $0< t<s$ and apply this inequality with $t_1=\bar\lambda s,\, t_2=0,\, \lambda=\frac{t}{s}$.
    We get $A(\bar\lambda t)\leq \frac{t}{s}A(\bar\lambda s)$ and, by simple rearrangements, $\varphi(t)\geq \varphi(s)$.
    Moreover we can calculate
    \[\begin{split}\int_\Omega F_\varphi(g(x))\,dx&=\int_{\{g>0\}}g(x)\left[\left(\frac{A(\bar\lambda g(x))}{g(x)}\right)^\frac{Q}{1-Q}\right]^\frac{1-Q}{Q}\, dx\\ &=\int_\Omega A(\bar\lambda g(x))\, dx<+\infty
    \end{split}\]
    and also, exploiting \eqref{integrability} and \eqref{integrability2},
\[\int_0^{+\infty}\varphi^{1/Q}(t)\,dt=\int_0^{+\infty}\left(\frac{A(\bar\lambda t)}{t}\right)^\frac{1}{1-Q}dt=\bar\lambda^\frac{Q}{1-Q}\int_0^{+\infty}\left(\frac{t}{A(t)}\right)^\frac{1}{Q-1}\,dt<+\infty.\qedhere\]
\end{proof}
\begin{remark} It is proved in \cite{cianchi2} that condition \eqref{integrability} is indeed equivalent to the following:
\begin{equation}\label{int3}
    \int_0^{+\infty}\frac{\widetilde A(t)}{t^{1+Q'}}\,dt<+\infty,
\end{equation}
which clearly extends \eqref{sharp} in the case of Carnot groups of homogeneous dimension $Q$.
\end{remark}
Under condition \eqref{integrability}, or equivalently \eqref{int3}, functions in the space $W^{1,A}(\Omega,\G_2)$ are $Q-$absolutely continuous. Actually, only an Orlicz bound on the weak horizontal gradient of the sections is needed. This is expressed by the following result.

\begin{theorem}\label{finale}
    Let $\mathbb G_1\equiv \mathbb R^n$, $\mathbb G_2\equiv\mathbb R^m$ be Carnot groups and let $\Omega\subset\mathbb G_1$ open. Suppose also that $\mathcal L^n(\Omega)<+\infty$ and denote by $Q$ the homogeneous dimension of $\mathbb G_1$. Let $A$ be an N-function satisfying \eqref{integrability}.
    If $f\in W^{1,1}_{\rm loc}(\Omega,\mathbb G_2)$ and there exists a function $g\in L^A(\Omega)$ such that
    \[|\nabla_\mathbb G u_z|\leq g\qquad \text{for all }z\in\mathbb G_2,\]
    where $u_z$ are defined as in \eqref{sectionss},
    then there exists a representative of $f$ which is continuous and satisfies the (RR) condition. In particular $f\in AC^Q(\Omega,\mathbb G_2)$ and is P-differentiable a.e. in $\Omega$.
\end{theorem}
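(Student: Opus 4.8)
The plan is to follow the proof of Theorem~\ref{SobmapRR} almost verbatim, with Lemma~\ref{existenceofphi} playing the role that the characterization Theorem~\ref{charLQ1} of $L^{Q,1}$ played there. First I would check that the hypotheses of Lemma~\ref{existenceofphi} hold: $\mathcal L^n(\Omega)<+\infty$, $g\ge 0$ lies in $L^A(\Omega)$, and the $N$-function $A$ satisfies the growth condition \eqref{integrability}. Hence there is a nonincreasing positive function $\varphi\colon(0,+\infty)\to(0,+\infty)$ with
\[
\int_0^{+\infty}\varphi^{1/Q}(t)\,dt<+\infty
\qquad\text{and}\qquad
\int_\Omega F_\varphi(g(x))\,dx<+\infty ,
\]
where $F_\varphi$ is as in \eqref{Fvf}. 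From this point on exactly the pair of properties exploited in the proof of Theorem~\ref{SobmapRR} is available, so the argument can be transcribed with no essential change.

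\textbf{First step.} Fix $z\in\mathbb G_2$ and let $u_z(x)=d_2(z,f(x))$ be the corresponding section of $f$; by hypothesis $u_z\in W^{1,1}_{\mathbb G_1,\rm loc}(\Omega)$ and $|\nabla_{\mathbb G_1}u_z|\le g$ a.e. Applying Theorem~\ref{representation} I obtain a negligible set $N_z$ such that $|u_z(x)-(u_z)_U|\le C\int_U d_1(x,y)^{1-Q}\,|\nabla_{\mathbb G_1}u_z(y)|\,dy$ for every CC ball $U\subset\Omega$ and every $x\in U\setminus N_z$; combining this with $|\nabla_{\mathbb G_1}u_z|\le g$ and with Theorem~\ref{rieszpotential} (applied with $E=U$ and the above $\varphi$) gives
\[
\bigl({\rm osc}_{U\setminus N_z}u_z\bigr)^Q\le \widetilde C\Bigl(\int_0^{+\infty}\varphi^{1/Q}(s)\,ds\Bigr)^{Q-1}\int_U F_\varphi(g(x))\,dx
\]
for every CC ball $U\subset\Omega$. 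As in Theorem~\ref{SobmapRR} this makes $u_z$ locally uniformly continuous on $\Omega\setminus N_z$, hence it extends to a continuous function $\tilde u_z$ on $\Omega$ for which the same estimate holds with ${\rm osc}_{U\setminus N_z}u_z$ replaced by ${\rm osc}_U\tilde u_z$.

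\textbf{Second step.} Set $N=\bigcup_{z\in\mathbb Q^m}N_z$, still $\mathcal L^n$-negligible. By the triangle inequality and the density of $\mathbb Q^m$ in $\mathbb G_2\equiv\mathbb R^m$ one has $\sup_{z\in\mathbb Q^m}|d_2(z,f(x))-d_2(z,f(y))|=d_2(f(x),f(y))$ for $x,y\in\Omega\setminus N$, so the estimate upgrades to
\[
d_2(f(x),f(y))^Q\le \widetilde C\Bigl(\int_0^{+\infty}\varphi^{1/Q}(s)\,ds\Bigr)^{Q-1}\int_U F_\varphi(g(x))\,dx
\]
for all $x,y\in U\setminus N$ and all CC balls $U\subset\Omega$. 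Thus $f$ is locally uniformly continuous off a negligible set, hence admits a continuous representative, and this representative satisfies the (RR) condition of Definition~\ref{RRdef} with weight $w(x)=\widetilde C\bigl(\int_0^{+\infty}\varphi^{1/Q}(s)\,ds\bigr)^{Q-1}F_\varphi(g(x))$, which lies in $L^1(\Omega)$ precisely by \eqref{firstcond}. By the remarks following Definition~\ref{RRdef}, $f\in AC^Q(\Omega,\mathbb G_2)$, and since $AC^Q_{\rm loc}(\Omega,\mathbb G_2)\subset BV^Q_{\rm loc}(\Omega,\mathbb G_2)$, Theorem~\ref{diffthm} yields P-differentiability a.e.\ in $\Omega$.

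I do not expect a real obstacle: the whole novelty sits in Lemma~\ref{existenceofphi}, i.e.\ in manufacturing the auxiliary $\varphi$ from the Orlicz integrability of $g$, and that is exactly where the sharp growth condition \eqref{integrability} (equivalently \eqref{int3}) is used; everything downstream is the same real-variable machinery as in Theorem~\ref{SobmapRR}. In fact the proof could be phrased even more economically, since Lemma~\ref{existenceofphi} combined with Theorem~\ref{charLQ1} shows that under \eqref{integrability} the inclusion $L^A(\Omega)\subset L^{Q,1}(\Omega)$ holds, so that $g\in L^{Q,1}(\Omega)$ and Theorem~\ref{SobmapRR} applies directly; I would keep the transcription above only to make this appendix self-contained.
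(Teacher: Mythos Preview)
Your proposal is correct and follows essentially the same approach as the paper: the paper's proof simply states that the argument of Theorem~\ref{SobmapRR} goes through verbatim once Lemma~\ref{existenceofphi} is used in place of Theorem~\ref{charLQ1} to produce the auxiliary $\varphi$. Your final observation that one could alternatively conclude $g\in L^{Q,1}(\Omega)$ via Theorem~\ref{charLQ1} and invoke Theorem~\ref{SobmapRR} directly is also exactly the content of the remark immediately following the paper's proof.
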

\begin{proof}
The proof of this result can be achieved using the same strategy exploited in Theorem \ref{SobmapRR}. In particular the existence of a nonincreasing function $\varphi:(0,+\infty)\to(0,+\infty)$ satisfying properties \eqref{firstcond} and \eqref{secondcond} is ensured by Lemma \ref{existenceofphi} instead of Theorem \ref{charLQ1}. The other steps of the proof are unchanged.
\end{proof}
\begin{remark}
Equivalently one can observe that, combining Lemma \ref{existenceofphi} and the characterization given by Theorem \ref{charLQ1}, a function $g$ as in the assumptions belongs actually to the Lorentz space $L^{Q,1}(\Omega)$ and then apply Theorem \ref{SobmapRR}.
\end{remark}
We deduce that, under condition \eqref{integrability}, Orlicz-Sobolev maps between Carnot groups are actually in $AC^Q(\Omega,\G_2)$: hence they share properties like continuity, $P$-differentiability a.e. (Theorem \ref{diffthm}) and area formula (Theorem \ref{areaformulaAC} and Corollary \ref{areaformula2}).

\section {Appendix: Differentiation for maps between metric spaces and  metric space-valued Sobolev maps}\label{DiffMS}
\subsection{Differentiation for maps between metric spaces: }\label{DiffMSemb}
In this section we will assume that the metric measure space $X\equiv (X,d_X,\mu)$ admits a $p$-Poincar\'e inquality with $p\ge\,1$, the ambient measure $\mu$ is doubling and  $Y\equiv (Y,d_Y)$ is a (separable) metric space.\\
Let us recall that a general differentiation theory has been carried out   for  maps $f:X\to Y$, whenever $Y$ can be isometrically embedded into a Banach space $V$  satisfying the {\it Radon-Nikodym property} (RNP) (see \cite{CK1,CK2}). We recall  that a Banach space  $V$ sarisfies (RNP) if each Lipschitz function $f:\,\R\to V$ is differentiable a.e.    The first pioneering result is due to Cheeger\cite{cheeger} when $Y=\,\R$, who was able to endow the metric measure space $X$  by  a so-called {\it Cheeger differentiable structure} and proved a Rademacher differentiability theorem for Lipschitz function in this context.  This result was remarkably extended by Cheeger and Kleiner to maps with target space $Y=V$  satisfying (RNP). 
 Then, whenever the target space $Y$  can be bi-Lipschitz embedded into a Banach space $V$ satisfying (RNP), a differentiation theory can be carried out  also for maps $f:\Omega\subset X\to Y\subset V$  and one can get  a Stepanov's type theorem (see, for instance, \cite{WZ2}). Moreover metric space-valued Sobolev functions can be introduced  by means of the notion of {\it upper gradient} in this setting as in  \cite[Chap. 7]{HKST}).\\
This approach may not work, as concern the differentiability issue, if the target space is a Carnot group.  Indeed, as showed in \cite[Theorem 6.1]{CK1}, the simplest Carnot group $\H^1$ does not admit a bi-Lipschitz embedding into any Banach space satisfying (RNP).\\
\subsection{Metric space-valued Sobolev maps: }\label{MSvaluedSM}
As far as the notion of metric space-valued Sobolev mapping is concerned, different  equivalent approaches can be introduced. Let us point out here two ones related to our arguments: the approach by {\it embedding of the target space}  and the one by {\it sections of  mappings}. From now on we can weaken the assumptions on the involved metric spaces. More precisely, we assume that $X\equiv (X,d_X,\mu)$ is a metric measure space with ambient measure $\mu$ {\it finite on bounded sets}, that is $\mu(E)<\,\infty$ for each bounded $E\subset X$, and $Y\equiv (Y,d_Y,y_0)$ is a pointed complete and separable metric space. Moreover, we will denote by $B(x,r)$ and $U(x,r)$ the closed and open ball of $X$ centered at $x$ with radius $r$, respectively.
\vskip5pt
{\bf Approach by embedding of the target metric space:} This approach extends the one due to Shanmugalingam \cite{Shan}
who introduced the notion of {\it Newtonian Sobolev space} $N^{1,p}(X)$ for real-valued functions defined on $X$ and it was carried out in \cite[Chap. 7]{HKST}. Let us briefly recall here the main notions and results. We first introduce vector-valued Sobolev functions on metric spaces. If $V$ is a Banach space and $1\le\,p<\infty$, we say that a mapping $f:X\to V$ belongs to the {\it Dirichlet space} $D^{1,p}(X:V)$ if $f$ is measurable and it possesses a $p$-integrable $p$-weak {\it upper gradient} (see \cite[Section 6.2]{HKST}). When $V=\R$, we simply denote $D^{1,p}(X):=\,D^{1,p}(X:\R)$. It is clear that $D^{1,p}(X:V)$ is a vector space.\\ We denote as {\it Sobolev space of $V$-valued functions on $X$} the space $N^{1,p}(X:V):=\,D^{1,p}(X:V)\cap L^p(X,V)$ and  $N^{1,p}(X:V)$ turns out to be a Banach space with respect to the norm 
\[
\|u\|_{N^{1,p}(X:V)}:=\,\|u\|_{L^p(X,V)}+\|\rho_u\|_{L^p(X)},
\]
where $\rho_u$ denotes the minimal $p$-weak upper gradient of $u$ guaranteed by \cite[Chap. 6 and Theorem 6.3.20]{HKST}. \\ Consider now the {\it Kuratowski isometric embedding} of $Y$ into $V:=\,\ell^\infty(Y)$, with respect to the base point $y_0$. More precisely, let  $\ell^\infty(Y)$ denote the Banach space of bounded functions from $Y$ to $\R$, equipped with the norm
\[
\|\varphi\|_{\ell^\infty(Y)}:=\,\sup_{y\in Y}|\vf(y)|\quad\text{ if }\vf\in 
\ell^\infty(Y)\,.
\]
Then the mapping
\[\begin{split}
Y\ni y\mapsto\,&\vf_y\in\ell^\infty(Y)\\ &\vf_y(z):=\,d(z,y)-d(y_0,z)\quad z\in Y\end{split}
\]
is an isometric embedding (see \cite[Theorem 4.1]{HKST}).
According to \cite{HKST}, we can introduce the Sobolev and Dirichlet  classes of mappings between metric spaces $X$ to $Y$ (depending on the base point $y_0$) as follows:
\[
N^{1,p}(X:Y):=\left\{ f\in N^{1,p}(X:\ell^\infty(Y)): f(x)\in Y\,\mu\text{-a.e. }x\in X\right\}
\]
and similarly
\[
D^{1,p}(X:Y):=\left\{ f\in D^{1,p}(X:\ell^\infty(Y)): f(x)\in Y\,\mu\text{-a.e. }x\in X\right\}\,.
\]
Moreover we denote as $N_{\rm loc}^{1,p}(X:Y)$ ($D_{\rm loc}^{1,p}(X:Y)$, respectively) the class of all maps $f:\,X\to \ell^\infty(Y)$ such that each $x\in X$ has a small radius $r>\,0$  for which $f|_{U(x,r)}\in N^{1,p}(U(x,r):Y)$ ($f|_{U(x,r)}\in D^{1,p}(U(x,r):Y)$, respectively). Eventually we denote as $L^p_{\rm loc}(X,\mu)$ the class of measurable maps $f\to\bar \R$ such that $\int_E |f|^p\,d\mu<\infty$ for each bounded set $E\subset X$.
Let us recall the following characterizations of $N^{1,p}(X:Y)$ and $D^{1,p}(X:Y)$.
\begin{theorem}\label{charsect}(\cite[Theorem 7.1.20]{HKST}) Let $V$ be a Banach space and let $f:\,X\to V$ be a measurable function. The the following conditions are equivalent:
\begin{itemize}
\item [(i)] $f$ has a $\mu$-representative in the Dirichlet class $D^{1,p}(X:V)$;
\item[(ii)] there esists a $p$-integrable Borel function $\rho:\,X\to [0,\infty]$ with the following property: for each $1$-Lipschitz function $\vf:V\to\R$ there esists a $\mu$-representative $f_\vf$ of the function $\vf\circ f$ in $D^{1,p}(X)$ so that the minimal upper gradient $\rho_{f_\vf}$ of $f_\vf$ satisfies $\rho_{f_\vf}\le\,\rho$ $\mu$-a.e. in $X$.
\end{itemize}
\end{theorem}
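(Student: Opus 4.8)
The plan is to establish the two implications separately, with essentially all of the work concentrated in $(ii)\Rightarrow(i)$.

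For $(i)\Rightarrow(ii)$ I would argue directly. Assume $f$ has a representative in $D^{1,p}(X:V)$ and let $\rho:=\rho_f\in L^p(X)$ be its minimal $p$-weak upper gradient. Fix a $1$-Lipschitz map $\vf:V\to\R$. For $p$-almost every rectifiable curve $\gamma$ the upper gradient inequality for $f$ holds, and then the elementary estimate $|\vf(v)-\vf(w)|\le\|v-w\|_V$ gives $|\vf(f(\gamma(\ell_\gamma)))-\vf(f(\gamma(0)))|\le\int_\gamma\rho\,ds$. Hence $\rho$ is a $p$-weak upper gradient of $\vf\circ f$, so $\vf\circ f\in D^{1,p}(X)$ with $\rho_{\vf\circ f}\le\rho$ almost everywhere. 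Choosing $f_\vf:=\vf\circ f$ and the single function $\rho$, which is independent of $\vf$, proves $(ii)$.

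For $(ii)\Rightarrow(i)$ the idea is to reconstruct the vector-valued upper gradient inequality from the scalar ones by writing the norm of $V$ as a countable supremum of $1$-Lipschitz functions. Since $f$ is (strongly) measurable it is essentially separably valued, so after discarding a $\mu$-null set I may assume $f(X)$ lies in a separable closed subspace $V_0\subseteq V$. Fix a countable dense set $\{v_j\}_j\subseteq V_0$ and set $\vf_j(v):=\|v-v_j\|_V$; each $\vf_j$ is $1$-Lipschitz, and the separating identity
\[
\|v-w\|_V=\sup_j|\vf_j(v)-\vf_j(w)|\qquad\text{for all }v,w\in V_0
\]
holds, the nontrivial inequality following by choosing $v_j\to w$. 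By hypothesis each $\vf_j\circ f$ has a representative in $D^{1,p}(X)$ whose minimal upper gradient is dominated by the common $\rho$, so $\rho$ is itself a $p$-weak upper gradient of $\vf_j\circ f$; let $\Gamma_j$ be the associated exceptional family, with $\mathrm{Mod}_p(\Gamma_j)=0$. Putting $\Gamma:=\bigcup_j\Gamma_j$, countable subadditivity of the $p$-modulus yields $\mathrm{Mod}_p(\Gamma)=0$, and for every $\gamma\notin\Gamma$ all the scalar inequalities hold simultaneously. Taking the supremum over $j$ and invoking the separating identity gives
\[
\|f(\gamma(\ell_\gamma))-f(\gamma(0))\|_V=\sup_j|\vf_j(f(\gamma(\ell_\gamma)))-\vf_j(f(\gamma(0)))|\le\int_\gamma\rho\,ds,
\]
so that $\rho\in L^p(X)$ is a $p$-weak upper gradient of $f$ and $f\in D^{1,p}(X:V)$.

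The delicate point is not this scalar-to-vector passage but the bookkeeping of representatives. Hypothesis $(ii)$ only supplies, for each $j$, a representative $f_{\vf_j}$ agreeing with $\vf_j\circ f$ up to a $\mu$-null set, whereas upper gradient inequalities are statements along curves, and a $\mu$-null set can be traversed by a family of curves of positive $p$-modulus. I would resolve this by working with the quasicontinuous Newtonian representatives of the $f_{\vf_j}$, which are determined up to sets of $p$-capacity zero, and by invoking that $p$-almost every curve avoids a fixed set of capacity zero and meets the common good set $\{x:f_{\vf_j}(x)=\vf_j(f(x))\text{ for all }j\}$ in a subset of full length. Enlarging $\Gamma$ by these further modulus-zero families allows me to identify the scalar traces $\vf_j(f(\cdot))$ with those of the chosen representatives along the remaining curves, so that the supremum identity and the vector inequality above hold for a genuine $\mu$-representative of $f$. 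Reconciling $\mu$-almost-everywhere equality with the pointwise, curve-based theory is thus the main obstacle of the argument.
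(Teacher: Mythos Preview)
The paper does not prove this theorem: it is stated with the citation \cite[Theorem 7.1.20]{HKST} and used as a black box. Your outline is essentially the argument one finds in that reference: reduce to a countable family of distance functions $\vf_j(v)=\|v-v_j\|$ via essential separability, use the common bound $\rho$ on the scalar minimal gradients, intersect the countably many exceptional curve families, and recover the vector inequality from the identity $\|v-w\|=\sup_j|\vf_j(v)-\vf_j(w)|$.

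Two remarks on the resolution of the representative issue you flag. First, quasicontinuity of Newtonian functions is only known under additional hypotheses on $X$ (doubling measure and a Poincar\'e inequality), which are not assumed in the statement; the HKST argument does not invoke it. Second, the observation that $p$-almost every curve meets the good set $X\setminus N$ in a subset of full length is correct (take the admissible function $\infty\cdot\chi_N$), but it does not by itself control the \emph{endpoints} of $\gamma$, which may lie in $N$ even when $\mathcal L^1(\gamma^{-1}(N))=0$. The way HKST closes this gap is not by excluding further curves but by \emph{constructing} the required $\mu$-representative of $f$: along each good curve $\gamma$ the restriction $f\circ\gamma|_{\gamma^{-1}(X\setminus N)}$ is shown to be uniformly continuous (with modulus $\int\rho\circ\gamma$), hence extends uniquely to an absolutely continuous $V$-valued function on $[0,\ell_\gamma]$, and a separate lemma guarantees that these curve-wise extensions are consistent and patch together into a single function $\tilde f=f$ $\mu$-a.e.\ for which $\rho$ is a genuine $p$-weak upper gradient. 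Your last sentence correctly names this reconciliation as the main obstacle; spelling out the extension-and-consistency step is precisely what remains.
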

\begin{theorem}(\cite[Proposition 7.1.38]{HKST})\label{charND} A measurable map $f:X\to Y$ is in  $D^{1,p}(X:Y)$ if and only if there is a non-negative $p$-integrable Borel function $\rho$ on $X$ such that whenever $\gamma:\,[0,\ell(\gamma)]\to X$ is a non constant rectifiable curve in $X$
\begin{equation*}
  d_Y(f\left(\gamma(\ell(\gamma))\right),f\left(\gamma(0)\right))\le\,\int_{\gamma} \rho\,ds\,,
\end{equation*}
where $\ell(\gamma)$ denotes the length of $\gamma$ and
\[
\int_{\gamma} \rho\,ds:=\,\int_0^{\ell(\gamma)} \rho (\gamma(t))\,dt\,.
\]
Moreover, if $\mu(X)$ is finite, then $f\in N^{1,p}(X:Y)$ if and only if  $f\in D^{1,p}(X:Y)$ and the function $X\ni x\mapsto d_Y(y_0,f(x))$ is $p$-integrable for some (and hence every ) $y_0\in Y$.
\end{theorem}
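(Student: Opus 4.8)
The plan is to transfer the question to the scalar theory of Sobolev functions on metric spaces through the Kuratowski isometric embedding $\iota\colon Y\to\ell^\infty(Y)$, $\iota(y):=\vf_y$, and to use \refth{charsect} as the bridge between the two formulations. Recall that $f\in D^{1,p}(X:Y)$ means exactly that $\iota\circ f$ is a measurable map lying in $D^{1,p}(X:\ell^\infty(Y))$, and that $\|\iota(y_1)-\iota(y_2)\|_{\ell^\infty(Y)}=d_Y(y_1,y_2)$ for all $y_1,y_2\in Y$. The two implications are treated separately: the forward one is immediate from the isometry, while the nontrivial converse relies on a countable-dense-set reduction followed by a modulus argument that upgrades a $p$-weak upper gradient (inequality for $p$-a.e. curve) to a genuine one (inequality for every curve).

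Suppose first that $\rho\ge 0$ is $p$-integrable and Borel with $d_Y(f(\gamma(\ell(\gamma))),f(\gamma(0)))\le\int_\gamma\rho\,ds$ for every non-constant rectifiable curve $\gamma$. Since $\iota$ is an isometry, the left-hand side equals $\|\iota(f(\gamma(\ell(\gamma))))-\iota(f(\gamma(0)))\|_{\ell^\infty(Y)}$, so the very same inequality asserts that $\rho$ is a genuine, hence $p$-weak, upper gradient of the Banach-valued map $\iota\circ f$. As $\iota(Y)$ is separable (being the isometric image of the separable space $Y$), the map $\iota\circ f$ is measurable and essentially separably valued, and combined with the $p$-integrability of $\rho$ this gives $\iota\circ f\in D^{1,p}(X:\ell^\infty(Y))$; since $f(x)\in Y$ for every $x$, we conclude $f\in D^{1,p}(X:Y)$.

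For the converse, assume $f\in D^{1,p}(X:Y)$ and apply \refth{charsect} to $V=\ell^\infty(Y)$ and to $\iota\circ f$: there is a $p$-integrable Borel $\rho\ge 0$ such that, for every $1$-Lipschitz $\Phi\colon\ell^\infty(Y)\to\R$, the composition $\Phi\circ\iota\circ f$ has a representative in $D^{1,p}(X)$ whose minimal upper gradient is dominated by $\rho$. Fix a countable dense set $\{y_k\}_k\subset Y$ and take the $1$-Lipschitz maps $\Phi_k(\Psi):=\|\Psi-\iota(y_k)\|_{\ell^\infty(Y)}$; then $u_k:=\Phi_k\circ\iota\circ f=d_Y(f(\cdot),y_k)$ admits $\rho$ as a $p$-weak upper gradient. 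Each exceptional family $\Gamma_k$ of curves along which the upper gradient inequality for $u_k$ fails has zero $p$-modulus, hence so does $\Gamma^*:=\bigcup_k\Gamma_k$ by countable subadditivity. For any non-constant rectifiable $\gamma\notin\Gamma^*$ with endpoints $a=\gamma(0)$ and $b=\gamma(\ell(\gamma))$ we have $|d_Y(f(b),y_k)-d_Y(f(a),y_k)|\le\int_\gamma\rho\,ds$ for all $k$; selecting from $\{y_k\}$ a subsequence converging to $f(a)$ and using the reverse triangle inequality, the left-hand side tends to $d_Y(f(a),f(b))$, whence $d_Y(f(a),f(b))\le\int_\gamma\rho\,ds$. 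Thus $\rho$ is a $p$-weak upper gradient of $f$, with exceptional family contained in $\Gamma^*$.

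The main obstacle is then the passage from a $p$-weak to a genuine upper gradient (together with the attendant representative and measurability subtleties in the non-separable space $\ell^\infty(Y)$). Let $\Gamma=\{\gamma: d_Y(f(b),f(a))>\int_\gamma\rho\,ds\}$ be the bad family; since $\Gamma\subset\Gamma^*$ we have $\mathrm{Mod}_p(\Gamma)=0$, so by the definition of null $p$-modulus there is a $p$-integrable Borel $\rho_0\ge 0$ with $\int_\gamma\rho_0\,ds=+\infty$ for every $\gamma\in\Gamma$. Then $\rho+\rho_0$ is $p$-integrable and satisfies the curve inequality for \emph{every} non-constant rectifiable curve (trivially on $\Gamma$, and because $\rho$ already works off $\Gamma$), which is the required genuine upper gradient. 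Finally, for the statement about $N^{1,p}$ when $\mu(X)<\infty$, note that $\|\iota(f(x))\|_{\ell^\infty(Y)}=\sup_z|d_Y(z,f(x))-d_Y(y_0,z)|=d_Y(y_0,f(x))$, so $\iota\circ f\in L^p(X,\ell^\infty(Y))$ precisely when $x\mapsto d_Y(y_0,f(x))$ is $p$-integrable; intersecting with $D^{1,p}(X:Y)$ yields $N^{1,p}(X:Y)$, and since changing the base point alters this function by a bounded quantity, finiteness of $\mu(X)$ makes the condition independent of the chosen $y_0$.
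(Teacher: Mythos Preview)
The paper does not supply its own proof of this statement; it is quoted verbatim as \cite[Proposition~7.1.38]{HKST} and used as a black box in the appendix, so there is no in-paper argument to compare your proposal against.

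That said, your proof is essentially correct and follows the standard line. One comment on the converse direction: the detour through \refth{charsect} and the countable family of distance sections $u_k$ is unnecessary and, as written, introduces a representative subtlety (the $\rho$ produced by \refth{charsect} dominates the minimal upper gradient of a \emph{$\mu$-representative} $f_{\Phi_k}$ of $u_k$, which may differ from $u_k$ pointwise, so the inequality $|u_k(b)-u_k(a)|\le\int_\gamma\rho$ on good curves is not immediate). The cleaner route is to use the definition directly: since $f\in D^{1,p}(X:Y)$ means that $\iota\circ f$ itself lies in $D^{1,p}(X:\ell^\infty(Y))$, it already possesses a $p$-integrable $p$-weak upper gradient $\rho$, and the isometry of $\iota$ gives $d_Y(f(\gamma(\ell(\gamma))),f(\gamma(0)))=\|\iota(f(\gamma(\ell(\gamma))))-\iota(f(\gamma(0)))\|_{\ell^\infty(Y)}\le\int_\gamma\rho\,ds$ for $p$-a.e.\ curve without any dense-set or representative argument. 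Your upgrade step via $\rho+\rho_0$ and the $N^{1,p}$ clause are both fine.
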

Theorem \ref{charND} shows that there is an intrinsic way of determining
membership in $D^{1,p}(X:Y)$ and $N_{\rm loc}^{1,p}(X:Y)$ that is independent of the
embedding of $Y$ into a Banach space. Instead, membership in $N^{1,p}(X:Y)$ (as well as in $L^p(X:Y))$ depends on the specific embedding and, in particular, on the base point $y_0$.\\
\vskip5pt
{\bf Approach by sections of mappings: }
It is an extension of the one exploited in Definition \ref{SobspacesvaluedCarnot}, first introduced by Ambrosio \cite{ambrosio2}, for the case of $BV$ functions, and then by Reshetnyak \cite{Resh}, for the case of Sobolev functions. More precisely, first  one provides  the notion of a suitable real-valued Sobolev space $W^{1,p}(X)$ and, for each $u\in W^{1,p}(X)$  a notion of modulus of the ``weak gradient" $|\nabla_X u |$ is introduced. For instance,    by using the previous Newtonian Sobolev space $N^{1,p}(X)$,   the minimal $p$-weak upper gradient $\rho_u$ plays the role of $|\nabla_X u |$ (see, for instance, \cite{AG,AILP}). In the following we assume that  $W^{1,p}(X)=\,N^{1,p}(X)$. Then,  given a mapping $f:X\to Y$, we say that $f\in W^{1,p}(X,Y)$ if
\begin{itemize}
\item the map $X\ni x\mapsto d_Y(y_0,f(x))$ belongs to $L^p(X,\mu)$;
\item for each $y\in Y$, the section $u_y:\,X\to\R$, 
\begin{equation}\label{sectfMS}
u_y(x):=\,d_Y(y,f(x))\text{ if } x\in X \text{ belongs to $D^{1,p}(X)$}
\end{equation}
and there exists $g\in L^p(X,\mu)$ such that 
\begin{equation}\label{domugsect}
\rho_{u_y}\le g\,\,\mu\text{-a.e. in $X$, for each $y\in Y$}\,,
\end{equation}
denoting by $\rho_u$ the minimal $p$-weak upper gradient of $u\in D^{1,p}(X)$. A function $g$ as in \eqref{domugsect} will be called a {\it $p$-weak upper gradient of $f$}.
\end{itemize}
As usual, let us denote by $W^{1,p}_{\rm loc}(X,Y)$ the class of all maps $f:\,X\to Y$ such that each $x\in X$ has a bounded open neighborhood $U_x$ for which $f|_{U_x}\in W^{1,p}(U_x,Y)$.\\

As a consequence of Theorem \ref{charsect}, for fixed $y_0$, we get the following coincidence of spaces:
\begin{theorem}\label{W=N} Let $1<\,p<\,\infty$. Then
$W^{1,p}(X,Y)=\,N^{1,p}(X:Y)$.
\end{theorem}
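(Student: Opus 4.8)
The plan is to reduce the identity $W^{1,p}(X,Y)=N^{1,p}(X:Y)$ to the two cited characterizations, Theorem~\ref{charsect} and Theorem~\ref{charND}, together with one elementary observation about the Kuratowski embedding. Write $\iota\colon Y\to\ell^\infty(Y)$, $\iota(y):=\varphi_y$ with $\varphi_y(z)=d_Y(z,y)-d_Y(y_0,z)$. Then $\iota(y_0)=0$ and $\iota$ is an isometry, so $\|\iota(f(x))\|_{\ell^\infty(Y)}=\|\iota(f(x))-\iota(y_0)\|_{\ell^\infty(Y)}=d_Y(f(x),y_0)$; hence the membership $\iota\circ f\in L^p(X:\ell^\infty(Y))$ hidden in $N^{1,p}(X:Y)$ is word for word the condition $x\mapsto d_Y(y_0,f(x))\in L^p(X,\mu)$ in the definition of $W^{1,p}(X,Y)$. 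Thus the whole content is the equivalence at the Dirichlet level: $\iota\circ f\in D^{1,p}(X:\ell^\infty(Y))$ (with values in $\iota(Y)$) $\iff$ every section $u_z=d_Y(z,f(\cdot))$, $z\in Y$, lies in $D^{1,p}(X)$ and admits a common bound $\rho_{u_z}\le g$ a.e.\ by a single $g\in L^p(X,\mu)$. I would prove the two implications separately; note that $\iota\circ f$ is measurable into $\ell^\infty(Y)$ because $\iota(Y)$ is separable, so there is no measurability issue, and the restriction $1<p<\infty$ is only used through Theorem~\ref{charsect}.

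\emph{The inclusion $N^{1,p}(X:Y)\subseteq W^{1,p}(X,Y)$.} Let $f\in N^{1,p}(X:Y)$, so $\iota\circ f\in D^{1,p}(X:\ell^\infty(Y))$. For $z\in Y$ the map $v_z\colon\ell^\infty(Y)\to\R$, $v_z(\psi):=\|\psi-\iota(z)\|_{\ell^\infty(Y)}$, is $1$-Lipschitz, and since $\iota$ is isometric, $v_z\circ(\iota\circ f)=d_Y(z,f(\cdot))=u_z$. Apply Theorem~\ref{charsect} to $\iota\circ f$: there is one $p$-integrable Borel function $\rho$ on $X$ such that, for \emph{every} $1$-Lipschitz $\psi\colon\ell^\infty(Y)\to\R$, the composition $\psi\circ(\iota\circ f)$ has a representative in $D^{1,p}(X)$ whose minimal $p$-weak upper gradient is $\le\rho$ a.e. Taking $\psi=v_z$ shows that each section $u_z$ lies in $D^{1,p}(X)$ with $\rho_{u_z}\le\rho$ a.e., a bound independent of $z$. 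Together with the already-matched $L^p$-condition, this gives $f\in W^{1,p}(X,Y)$ with weak upper gradient $g=\rho$.

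\emph{The inclusion $W^{1,p}(X,Y)\subseteq N^{1,p}(X:Y)$.} Let $f\in W^{1,p}(X,Y)$ with common bound $\rho_{u_z}\le g$, $g\in L^p(X,\mu)$. Fix a countable dense set $D\subseteq Y$. For each $z\in D$ the bound $\rho_{u_z}\le g$ gives $|u_z(\gamma(\ell(\gamma)))-u_z(\gamma(0))|\le\int_\gamma g\,ds$ for $p$-a.e.\ curve $\gamma$; since a countable union of $p$-exceptional curve families is $p$-exceptional, a single $p$-exceptional family $\Gamma_0$ handles all $z\in D$ simultaneously. For a nonconstant rectifiable $\gamma\notin\Gamma_0$, using the elementary identity $d_Y(a,b)=\sup_{z\in D}|d_Y(z,a)-d_Y(z,b)|$ (valid because $D$ is dense in $Y$),
\[
d_Y\big(f(\gamma(\ell(\gamma))),f(\gamma(0))\big)=\sup_{z\in D}\big|u_z(\gamma(\ell(\gamma)))-u_z(\gamma(0))\big|\le\int_\gamma g\,ds .
\]
Hence $g$ is a $p$-weak upper gradient of $f\colon X\to Y$, so Theorem~\ref{charND} yields $f\in D^{1,p}(X:Y)$; combined once more with the matched $L^p$-condition, $f\in N^{1,p}(X:Y)$.

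I do not expect a serious obstacle: the statement is essentially a repackaging of Theorems~\ref{charsect} and~\ref{charND}. The points needing care are (i) upgrading ``for $p$-a.e.\ curve, for each $z\in D$'' to ``for $p$-a.e.\ curve, for all $z\in D$'' and then pushing the supremum over $D$ through the length estimate, which rests on the countable subadditivity of $p$-exceptional curve families and on the continuity of $z\mapsto d_Y(z,\cdot)$; and (ii) the usual bookkeeping between genuine Newtonian representatives and a.e.\ equivalence classes when reading from Theorem~\ref{charsect} that $\psi\circ(\iota\circ f)$ ``has a representative in $D^{1,p}(X)$'' — one should note that $u_z=v_z\circ(\iota\circ f)$ is then itself a Newtonian function carrying the asserted upper gradient bound, hence genuinely qualifies as a section in the sense of the definition of $W^{1,p}(X,Y)$.
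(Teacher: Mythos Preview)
Your proof is correct, and for the inclusion $N^{1,p}(X:Y)\subseteq W^{1,p}(X,Y)$ it coincides with the paper's argument (Theorem~\ref{charsect} applied to the distance-from-a-point maps on $\ell^\infty(Y)$).

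For the reverse inclusion $W^{1,p}(X,Y)\subseteq N^{1,p}(X:Y)$ you take a genuinely different and more direct route. The paper first proves an auxiliary lemma (Lemma~\ref{CRlipW}): for \emph{every} Lipschitz $\varphi:Y\to\R$ the composite $\varphi\circ f-\varphi(y_0)$ lies in $N^{1,p}(X)$ with minimal upper gradient $\le \mathrm{Lip}(\varphi)\,g$. That lemma is proved by writing $\varphi(f(x))=\inf_i\big(\varphi(z_i)+L\,u_{z_i}(x)\big)$ over a dense sequence $(z_i)$, invoking the lattice property of $D^{1,p}(X)$ for the finite infima, and then a locality/stability result from \cite{HKST} to pass to the limit; Theorem~\ref{charsect} then closes the argument. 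You bypass this entirely: using the same dense set you take a \emph{pointwise supremum of the curve estimates} rather than an infimum of functions, obtaining directly that $g$ is a $p$-weak upper gradient of $f$ itself, and then invoke Theorem~\ref{charND}. Your approach is shorter and avoids the lattice/locality machinery; the paper's approach yields Lemma~\ref{CRlipW} as an independently useful byproduct (stability under postcomposition by arbitrary Lipschitz maps). One small point to tidy: as stated in the paper, Theorem~\ref{charND} asks for the upper gradient inequality along \emph{every} rectifiable curve, while your argument produces it only for $p$-a.e.\ curve; this is harmless since $D^{1,p}(X:Y)$ is by definition the class of maps with a $p$-integrable $p$-\emph{weak} upper gradient, but you should either cite that definition directly or note the standard fact that a $p$-weak upper gradient suffices for membership.
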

Before the proof of Theorem \ref{W=N}, we will introduce a preliminary result, which is an extension  of   \cite[Proposition 2.31]{KMX} to metric spaces. 
\begin{lemma}\label{CRlipW}Let $1<\,p<\,\infty$,  $f\in W^{1,p}(X,Y)$ and   let $\vf:\,Y\to\R$ be Lipschitz. Then 
\begin{itemize}
\item [(i)] $X\ni x\mapsto\vf(f(x))-\vf(y_0)$ is in $L^p(X,\mu)$.
\item [(ii)] If $f_\vf:=\,\vf\circ f$, then $f_\vf-\varphi(y_0)\in N^{1,p}(X)$ and $f_\vf$ admits a minimal $p$-weak upper gradient $\rho_{f_\vf}\le L\,g$ $\mu$-a.e. in $X$, being $g$ the function in \eqref{domugsect} and $L:=\,Lip(\vf)$.
\end{itemize}
\end{lemma}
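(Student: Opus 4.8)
The plan is to dispatch (i) immediately and to concentrate on (ii). For (i): since $\varphi$ is $L$-Lipschitz, $|\varphi(f(x))-\varphi(y_0)|\le L\,d_Y(y_0,f(x))$ for every $x\in X$, and the right-hand side lies in $L^p(X,\mu)$ by the very definition of $W^{1,p}(X,Y)$; hence $\varphi\circ f-\varphi(y_0)\in L^p(X,\mu)$. For (ii) the idea is to approximate $\varphi$ from above by McShane-type Lipschitz functions that are manifestly assembled from the sections $u_y=d_Y(y,f(\cdot))$. Since $Y$ is separable I would fix a countable dense set $\{y_j\}_{j\in\N}\subset Y$ and set
\[
\psi_n(y):=\min_{1\le j\le n}\bigl(\varphi(y_j)+L\,d_Y(y,y_j)\bigr),\qquad y\in Y,\ n\in\N .
\]
Each $\psi_n$ is $L$-Lipschitz, and because $\varphi$ is $L$-Lipschitz one has $\varphi\le\psi_{n+1}\le\psi_n$ on $Y$ with $\psi_n(y)\to\varphi(y)$ for every $y\in Y$ (for $y$ close to some $y_j$ the estimate $\psi_n(y)\le\varphi(y_j)+L\,d_Y(y,y_j)$ pins down the limit); in particular $\psi_n(y_0)\to\varphi(y_0)$. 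Composing with $f$ gives $\psi_n\circ f=\min_{1\le j\le n}\bigl(\varphi(y_j)+L\,u_{y_j}\bigr)$.

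Next I would use that, by hypothesis, each $u_{y_j}\in D^{1,p}(X)$ with minimal $p$-weak upper gradient $\rho_{u_{y_j}}\le g$ $\mu$-a.e., a property unaffected by adding the constant $\varphi(y_j)$ and scaling by $L$. By the lattice property of $D^{1,p}(X)$ — the minimal $p$-weak upper gradient of a finite minimum is $\mu$-a.e. dominated by the maximum of the individual minimal upper gradients, see \cite[Chap.~6]{HKST} — one obtains $\psi_n\circ f\in D^{1,p}(X)$ with $\rho_{\psi_n\circ f}\le L g$ $\mu$-a.e., \emph{uniformly in $n$}. Moreover $|\psi_n(f(x))-\psi_n(y_0)|\le L\,d_Y(y_0,f(x))\in L^p(X,\mu)$, so $\psi_n\circ f-\psi_n(y_0)\in N^{1,p}(X)$ for every $n$.

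The final step is the passage to the limit. Since $\psi_n\circ f\to\varphi\circ f$ pointwise and $\psi_n(y_0)\to\varphi(y_0)$, with the uniform domination $|\psi_n\circ f-\psi_n(y_0)|\le L\,d_Y(y_0,f(\cdot))\in L^p(X,\mu)$, dominated convergence gives $\psi_n\circ f-\psi_n(y_0)\to\varphi\circ f-\varphi(y_0)$ in $L^p(X,\mu)$. Here the assumption $p>1$ is essential: by the standard closure and lower-semicontinuity properties of Newtonian spaces (reflexivity of $L^p$ together with a Mazur/Fuglede argument for $p$-weak upper gradients, \cite[Chap.~6--7]{HKST}), the $L^p$-limit of a sequence in $N^{1,p}(X)$ whose minimal $p$-weak upper gradients are uniformly bounded by a fixed function $Lg\in L^p(X,\mu)$ again belongs to $N^{1,p}(X)$, with minimal $p$-weak upper gradient $\le Lg$ $\mu$-a.e. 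Applying this to $\psi_n\circ f-\psi_n(y_0)$ yields $f_\varphi-\varphi(y_0)=\varphi\circ f-\varphi(y_0)\in N^{1,p}(X)$, and since minimal upper gradients are insensitive to additive constants, $\rho_{f_\varphi}\le Lg$ $\mu$-a.e. The main obstacle is exactly this last limiting step — one must invoke the correct stability theorem for Newtonian functions rather than anything elementary — while the McShane construction and the lattice property of minimal upper gradients are routine; the whole argument runs parallel to the Carnot-group proof of \cite[Proposition~2.31]{KMX}.
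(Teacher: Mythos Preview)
Your proof is correct and follows essentially the same strategy as the paper: part (i) is identical, and for part (ii) both you and the paper use the McShane-type approximation $\psi_n(y)=\min_{1\le j\le n}(\varphi(y_j)+L\,d_Y(y,y_j))$ built from a countable dense set in $Y$, observe that $\psi_n\circ f$ is a finite minimum of shifted and scaled sections $u_{y_j}$, invoke the lattice property of $D^{1,p}(X)$ to get the uniform upper-gradient bound $\rho_{\psi_n\circ f}\le Lg$, and pass to the limit. The only cosmetic differences are that the paper bounds the decreasing sequence from below by $\tilde f_\varphi$ and above by $v_1$ to run dominated convergence in $L^p_{\rm loc}$ and then appeals to \cite[Lemma~7.3.22]{HKST}, whereas you use the cleaner global domination $|\psi_n\circ f-\psi_n(y_0)|\le L\,d_Y(y_0,f(\cdot))$ and a reflexivity/Mazur/Fuglede argument; both routes to the limiting step are standard and valid.
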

\begin{proof} 
(i) By definition of $W^{1,p}(X,Y)$, the map  $X\ni x\mapsto d_Y(y_0,f(x))$ is in $L^p(X,\mu)$. Being $\vf$ Lipschitz,
\[
|\vf(f(x))-\vf(y_0)|\le\, L\, d_Y(y_0,f(x))\text{ for each }x\in X\,,
\]
and we get the desired conclusion.
\\
(ii) Let $Z:=\,\{z_i:\,i\in\N\}\subset Y$ be a dense set. Let us denote
\[
\tilde f_\vf:=\,f_\vf-\vf(y_0):\,X\to\R \text{ and }u_i:=\vf(z_i)+L\,u_{z_i}:\,X\to\R\,,
\]
where $u_z$ is the section of $f$ defined in \eqref{sectfMS}.
Note that we can represent $f_\vf$ as
\begin{equation}\label{represvf}
\vf(f(x))=\,\inf_{i\in \N}u_i(x)\text{ for each }x\in X\,.
\end{equation}
Indeed, being $\vf$ Lipschitz,
\[|\varphi(f(x))-\varphi(z_i)|\leq L\,d_Y(z_i,f(x)).\]
This means that 
\[\varphi(f(x))\leq \varphi(z_i)+L\,d_Y(z_i,f(x))=\varphi(z_i)+L\,u_{z_i}(x)=u_i(x).\]
%
Moreover, consider a sequence $(i_k)_{k\in\N}$ such that $z_{i_k}\to f(x)$. We get
\[\begin{split}|u_{i_k}(x)-\varphi(f(x))|&=|\varphi(z_{i_k})+L\,d_Y(z_{i_k},f(x))-\varphi(f(x))|\\ &\leq |\varphi(z_{i_k})-\varphi(f(x))|+|L\,d_Y(z_{i_k},f(x))|\leq 2L\,d_Y(z_{i_k},f(x))\to 0,\end{split}\]which implies \eqref{represvf}.
Thus, from \eqref{represvf}, it also follows that
\begin{equation}\label{represtildefvf}
\tilde f_\vf(x)=\,\vf(f(x))-\vf(y_0)=\,\inf_{i\in \N}(\tilde u_i (x))\,.
\end{equation}
where $\tilde u_i:=\,u_i-\vf(y_0)$. Note also that $\tilde u_i\in D^{1,p}(X)\cap L^p_{\rm loc}(X,\mu)$ with 
\begin{equation}
\rho_{\tilde u_i}\le L g \,\mu\text{-a.e. in $X$ for each $i\in \N$} 
\end{equation}
where $\rho_{\tilde u_i}$ denotes the minimal $p$-weak upper gradient of $\tilde u_i$ and $g$ a $p$-weak upper gradient of $f$, that is a function satisfying  in \eqref{domugsect}. 
Let $v_j:=\,\inf_{1\le i\le j}\tilde u_i$. From  \cite[Proposition 7.1.8]{HKST}), $D^{1,p}(X)$ is a lattice, that is $v_j\in D^{1,p}(X)$ for each $j$ and
\begin{equation}\label{UGvjunifbded}
\rho_{v_j}\le  L \,g\quad\mu\text{-a.e. in $X$, for each $j$}\,.
\end{equation}
Now observe that, since
\begin{equation}\label{domvj}
\tilde f_\vf\le\, v_j\le\, v_1\quad\text{for each $j$} \,,
\end{equation}
from \eqref{represtildefvf} and \eqref{domvj}, by applying Lebesgue dominated convergence theorem, it follows that
\begin{equation}\label{vjtotildefvf}
v_j\to \tilde f_\vf\text{ in }L^p_{\rm loc}(X,\mu)\text{ as $j\to\infty$.}
\end{equation}
Let $x_0\in X$ and $\Omega_n:=\,U(x_0,n)$, if $n\in\N$. Then $(\Omega_n)_n$ is an increasing sequence of open balls of $X$ and $X=\cup_{n=1}^\infty \Omega_n$. From \eqref{vjtotildefvf} and \eqref{UGvjunifbded}, by the locality of Sobolev functions in $N^{1,p}(X)$ (see  \cite[Lemma 7.3.22]{HKST}), there exists 
\begin{equation}\label{tildefvfinN1p}
\hat f\in N^{1,p}(X)\text{  such that } \hat f=\,\tilde f_\vf \,\mu\text{-a.e. in } X
\end{equation} 
and $\hat f$ admits a $p$-weak upper gradient $\rho_\infty$  in $X$ satisfying
\begin{equation}\label{UGtildefvf}
\rho_\infty\le\, L\,g\quad\mu \text{-a.e. in }X.
\end{equation}
By \eqref{tildefvfinN1p} and \eqref{UGtildefvf}, it follows that $\tilde f_\vf$ admits a $\mu$-representative $\hat f\in N^{1,p}(X)$ with minimal $p$-weak upper gradient $\rho_{\hat f}\le\, L\,g$ $\mu$-a.e. in $X$. Thus we get the desired conclusion.
\end{proof}
\begin{proof}[Proof of Theorem \ref{W=N}]
Let us first show the  inclusion
\begin{equation*}
N^{1,p}(X:Y)\subset W^{1,p}(X,Y)\,.
\end{equation*}
Let $f\in N^{1,p}(X:Y)$. 
From Theorem \ref{charND},
\begin{equation}\label{finLp}
f\in N^{1,p}(X:Y)\subset\,D^{1,p}(X:Y)\subset D^{1,p}(X:\ell^\infty(Y))\,,
\end{equation}
and the map
\[
X\ni x\mapsto d_Y(y_0,f(x))\text{ is in }L^p(X,\mu)\,.
\]
For a given $y\in Y$, let $\vf_y:Y\to\R$ be the function, $\vf_y(z):=\,d_Y(y,z)$. Then by applying Theorem \ref{charsect} (ii)
\begin{equation}\label{CRf}
f_y:=\,\vf_y\circ f\in D^{1,p}(X)\text{ and }\rho_{f_y}\le\rho_{f}\,\text{$\mu$- a.e. in }X\text{, for each }y\in Y\,.
\end{equation}
Thus, combining \eqref{finLp} and \eqref{CRf}, it follows that $f\in W^{1,p}(X,Y)$.\\
Let us now show the inclusion
\begin{equation*}
W^{1,p}(X,Y)\subset N^{1,p}(X:Y)\,.
\end{equation*}
Let  $f\in W^{1,p}(X,Y)$. By definition,  we can can assume $X\ni x\mapsto d_Y(y_0,f(x))$ is in  $L^p(X,\mu)$. By applying Lemma \ref{CRlipW} and Theorem \ref{charsect} (ii), we can conclude that there exists  a $\mu$-representative of $f$ such that 
$f\in N^{1,p}(X:Y)$.
\end{proof}
We are going to  apply Theorem \ref{W=N} in the setting of  Carnot groups. In particular, for maps between Carnot groups, we have chosen to fix $y_0=0_{\G_2}$ (see Definition \ref{SobspacesvaluedCarnot}). Let us first recall the coincidence between the Newtonian Sobolev space and the horizontal Sobolev space of real valued  functions, which is well-known  (see, for instance, \cite[Proposition C.12]{KMX}).
\begin{theorem}\label{NG=WG} Let $1<\,p<\,\infty$ and  $\Omega\subset\G$ be  an open set of a Carnot group $\G\equiv\R^n$. Then
\[
N_\G^{1,p}(\Omega)=\,W_\G^{1,p}(\Omega)
\]
where $N_\G^{1,p}(\Omega)$ and $W_\G^{1,p}(\Omega)$  denote the Newtonian Sobolev space defined before between metric spaces $X=\,(\Omega,d_{c},\mathcal L^n)$ and $Y=\R$ and the horizontal Sobolev space defined in Definition \ref{horSob}, respectively.
\end{theorem}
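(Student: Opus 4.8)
The plan is to prove the two inclusions separately, using the density of smooth functions in one direction and a Fubini-type argument along the flow lines of the horizontal vector fields in the other.

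First I would establish $W^{1,p}_{\G}(\Omega)\subset N^{1,p}_{\G}(\Omega)$. The elementary observation here is that, for $u\in\mathbf C^\infty(\Omega)$, the function $|\nabla_\G u|$ is an upper gradient of $u$: given any horizontal curve $\gamma\colon[a,b]\to\Omega$ with $\gamma'(t)=\sum_{j=1}^{m_1}u_j(t)X_j(\gamma(t))$ in the sense of Definition \ref{horizontalcurve} (and, up to reparametrization, these are exactly the rectifiable curves for the CC distance, by Chow's theorem), the chain rule gives $\tfrac{d}{dt}u(\gamma(t))=\langle\nabla_\G u(\gamma(t)),u(t)\rangle$, so Cauchy--Schwarz and the fundamental theorem of calculus yield $|u(\gamma(b))-u(\gamma(a))|\le\int_a^b|\nabla_\G u(\gamma(t))|\,|u(t)|\,dt=\int_\gamma|\nabla_\G u|\,ds$. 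Hence every $u\in\mathbf C^\infty(\Omega)\cap W^{1,p}_{\G}(\Omega)$ lies in $N^{1,p}_{\G}(\Omega)$ with minimal $p$-weak upper gradient $\rho_u\le|\nabla_\G u|$. For arbitrary $u\in W^{1,p}_{\G}(\Omega)$, Theorem \ref{density} provides $u_k\in\mathbf C^\infty(\Omega)\cap W^{1,p}_{\G}(\Omega)$ with $u_k\to u$ and $\nabla_\G u_k\to\nabla_\G u$ in $L^p(\Omega)$; since each $u_k$ has the $p$-integrable upper gradient $|\nabla_\G u_k|$ and these converge in $L^p$, the standard completeness/closure property of Newtonian spaces (see \cite[Chapter 7]{HKST}) gives $u\in N^{1,p}_{\G}(\Omega)$ with $\rho_u\le|\nabla_\G u|$ a.e. Since Theorem \ref{density} is stated globally while the upper gradient property is local, I would run this on a countable exhaustion of $\Omega$ by bounded open subsets and glue via the locality of $N^{1,p}$.

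For the reverse inclusion $N^{1,p}_{\G}(\Omega)\subset W^{1,p}_{\G}(\Omega)$, fix $u\in N^{1,p}_{\G}(\Omega)$ with $p$-integrable minimal $p$-weak upper gradient $\rho_u$, and fix $i\in\{1,\dots,m_1\}$; I must show that the distributional derivative $X_iu$ exists, lies in $L^p(\Omega)$, and satisfies $|X_iu|\le\rho_u$ a.e. Working in a flow box for $X_i$ (possible locally, $X_i$ being a smooth nonvanishing vector field), the domain is foliated by integral curves of $X_i$, which are horizontal; by Fuglede's lemma the curve family on which the upper gradient inequality fails is $p$-exceptional, hence — after integrating over the transversal slice in the flow-box coordinates and using that a $p$-null family has vanishing modulus — for $\mathcal L^{n-1}$-a.e. leaf the restriction $t\mapsto u(\exp(tX_i)(x))$ is absolutely continuous with $\bigl|\tfrac{d}{dt}u(\exp(tX_i)(x))\bigr|\le\rho_u(\exp(tX_i)(x))$ a.e. This produces a pointwise a.e. derivative $X_iu$ with $|X_iu|\le\rho_u\in L^p$, and a Fubini integration by parts along the leaves (using that the flow of $X_i$ preserves $\mathcal L^n$ up to a smooth positive density, and that test functions are compactly supported) identifies it with the distributional derivative. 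Doing this for every $i$ gives $u\in W^{1,p}_{\G,\mathrm{loc}}(\Omega)$ with $|\nabla_\G u|\le\sqrt{m_1}\,\rho_u\in L^p(\Omega)$; since $u\in L^p(\Omega)$ already, $u\in W^{1,p}_{\G}(\Omega)$.

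I expect the main obstacle to be this second inclusion, specifically the rigorous passage from ``$u$ is absolutely continuous along $p$-a.e. rectifiable curve, with derivative controlled by $\rho_u$'' to ``the distributional horizontal derivatives of $u$ exist and are controlled by $\rho_u$''. This needs (i) a flow-box/Fubini decomposition adapted to each $X_i$ linking the $p$-modulus of a curve family to the $(n-1)$-dimensional measure of the leaves it meets, so that a $p$-exceptional family touches only an $\mathcal L^{n-1}$-null set of leaves; and (ii) a leaf-by-leaf integration by parts converting a.e. differentiability into $\int_\Omega (X_iu)\varphi\,dx=-\int_\Omega u\,X_i\varphi\,dx$ for all $\varphi\in\mathbf C^\infty_c(\Omega)$, correctly accounting for the smooth positive Jacobian of the flow. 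The remaining points — the exhaustion/localization over $\Omega$, the compatibility of Lebesgue and Newtonian representatives (a set of $p$-capacity zero is $\mathcal L^n$-null, so an $N^{1,p}$ function has a well-defined $\mathcal L^n$-equivalence class), and the elementary chain rule in the first inclusion — are routine.
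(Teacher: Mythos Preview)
The paper does not actually prove this theorem: it is stated as a well-known result and referred to \cite[Proposition C.12]{KMX}. So there is no in-paper proof to compare against; your outline is essentially the standard argument one finds in that reference and in the general metric-space literature (e.g.\ \cite{Shan,HKST}), and it is correct.

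Two minor remarks on the execution. First, in a Carnot group the generating fields $X_i$ are divergence-free (their coefficients in exponential coordinates are polynomials not depending on the variable they differentiate), so the flow of $X_i$ preserves $\mathcal L^n$ \emph{exactly}; you do not need to carry a smooth density through the Fubini/integration-by-parts step. Second, your bound $|\nabla_\G u|\le\sqrt{m_1}\,\rho_u$ is enough for the set equality, but the sharp inequality $|\nabla_\G u|\le\rho_u$ (indeed $\rho_u=|\nabla_\G u|$ a.e.) follows by running the same flow-line argument along the integral curves of an arbitrary unit horizontal vector $v=\sum_j c_jX_j$, $\sum c_j^2=1$, and then choosing $v$ in the direction of $\nabla_\G u$. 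The modulus step you flag as the delicate point is handled exactly as you say: if $\Gamma_0$ has $p$-modulus zero, pick $g\in L^p$ with $\int_\gamma g=\infty$ for every $\gamma\in\Gamma_0$; Fubini over a bounded flow box gives $\int g^p$ finite along a.e.\ leaf, and since leaves have finite length H\"older yields $\int_\gamma g<\infty$, so a.e.\ leaf avoids $\Gamma_0$.
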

As an immediate consequence of Theorems \ref{W=N} and \ref{NG=WG} and Definition \ref{SobspacesvaluedCarnot}, the following result follows (see also \cite[Proposition C.13]{KMX})
\begin{corollary}\label{metricvaluedNG=WG}
Let $\Omega\subset\G$ be  an open set of a Carnot group $\G\equiv\R^n$ and let $Y\equiv (Y,d_Y,y_0)$ be a pointed complete separable metric space. Then
\[
N^{1,p}(\Omega:Y)=\,W^{1,p}(\Omega,Y) 
\]
where $N^{1,p}(\Omega:Y)$ and $W^{1,p}(\Omega,Y)$ denote the Newtonian space between metric space $X=\,(\Omega,d_{c},\mathcal L^n)$ and $Y$, and the  metric space-valued Sobolev space defined by
sections introduced before, respectively.
\end{corollary}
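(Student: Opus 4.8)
The plan is to obtain the identity by concatenating the two equivalences already in hand, once the metric measure space $X:=(\Omega,d_c,\mathcal L^n)$ and the target $Y$ are checked to fit the hypotheses under which those equivalences hold. Since $\mathcal L^n$ is finite on bounded subsets of $\Omega$ and $Y$ is pointed, complete and separable, the ``approach by sections'' and Theorem~\ref{W=N} apply to the pair $(X,Y)$ for every exponent $1<p<\infty$; fixing the base point $y_0=0_{\G_2}$ as in Definition~\ref{SobspacesvaluedCarnot}, Theorem~\ref{W=N} gives $W^{1,p}(X,Y)=N^{1,p}(X:Y)$, where $W^{1,p}(X,Y)$ denotes the section space built on the real Newtonian space $N^{1,p}(X)=N^{1,p}_\G(\Omega)$, with the minimal $p$-weak upper gradient $\rho_u$ playing the role of the modulus of the weak gradient.

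It then remains to recognize this section space as the space $W^{1,p}(\Omega,Y)$ of Definition~\ref{SobspacesvaluedCarnot}. Here I would invoke Theorem~\ref{NG=WG}: it yields $N^{1,p}_\G(\Omega)=W^{1,p}_\G(\Omega)$ and, localizing, $N^{1,p}_{\G,{\rm loc}}(\Omega)=W^{1,p}_{\G,{\rm loc}}(\Omega)$, and — the point on which the matching of the two vector-valued definitions really hinges — that for a real-valued $u$ in this common space one has $\rho_u=|\nabla_\G u|$ $\mathcal L^n$-a.e. Granted this, the three clauses defining membership of a measurable $f:\Omega\to Y$ in $W^{1,p}(\Omega,Y)$ — that $x\mapsto d_2(0,f(x))\in L^p(\Omega)$, that each section $u_z=d_2(z,f(\cdot))$ lies in $W^{1,p}_{\G,{\rm loc}}(\Omega)$, and that there is a common $g\in L^p(\Omega)$ with $|\nabla_\G u_z|\le g$ a.e. — translate verbatim into the three clauses defining $W^{1,p}(X,Y)$ (same integrability of $x\mapsto d_Y(y_0,f(x))$, each $u_z\in D^{1,p}(X)$, and $\rho_{u_z}\le g$ a.e.). Hence $W^{1,p}(\Omega,Y)=W^{1,p}(X,Y)$, and chaining this with Theorem~\ref{W=N} produces $W^{1,p}(\Omega,Y)=N^{1,p}(\Omega:Y)$.

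The only step that is not purely formal is the pointwise identification $\rho_u=|\nabla_\G u|$ a.e.\ used above; this is exactly why Theorem~\ref{NG=WG} is stated as an equality of spaces and, implicitly, of gradient moduli. If one needed to reprove it, the inequality $\rho_u\le|\nabla_\G u|$ would follow from the density of smooth functions in $W^{1,p}_\G(\Omega)$ (Theorem~\ref{density}) together with the fact that $|\nabla_\G u|$ is an upper gradient for smooth $u$, while the reverse inequality would be obtained by integrating the upper-gradient inequality along the integral curves $s\mapsto\exp(sX_i)(x)$ of the generating vector fields $X_1,\dots,X_{m_1}$, in the same spirit as the difference-quotient estimate already carried out in the excerpt. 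With this identification available the Corollary is immediate.
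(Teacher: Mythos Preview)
Your proposal is correct and follows exactly the route the paper indicates: the Corollary is stated there as ``an immediate consequence of Theorems \ref{W=N} and \ref{NG=WG} and Definition \ref{SobspacesvaluedCarnot}'', and you have simply spelled out how those three ingredients combine, including the identification $\rho_u=|\nabla_\G u|$ implicit in Theorem~\ref{NG=WG}. The only cosmetic slip is the reference to the base point $0_{\G_2}$, since here $Y$ is a general pointed metric space with base point $y_0$; the argument is unchanged.
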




\begin{thebibliography}{99}



\bibitem {adams}{\sc R.A. Adams, J.J.F. Fournier},
  {\em Sobolev Spaces.} 
 Elsevier Science, 2003.

\bibitem{AlbCia}{\sc A.Alberico, A.Cianchi}, {\em Differentiability properties of Orlicz-Sobolev functions.} Ark. Mat. {\bf 43} (2005),1--28.

\bibitem{ambrosio2}{\sc L.Ambrosio}, {\em Metric space valued functions of bounded variation.} Ann. Scuola Norm. Sup. Pisa Cl. Sci.,{\bf 4} (17), (1990), 439--478.



\bibitem{AG} {\sc L.Ambrosio, R.Ghezzi}, {\em Sobolev and bounded variation functions on metric measures spaces.} Geometry,Analysis and Dynamics, vol. II,EMS Lectures Notes Series,pp. 63, 2016.
\bibitem{AILP}{\sc L.Ambrosio, T.Ikonen, D.Luci\'c, E.Pasqualetto}, {\em Metric Sobolev spaces I: equivalence of definitions.} ArXiv 2404.11190v1,2024.











%
%
%
%
%









%
%
%
%
%
%
%
%
%
%



%
%
%
%
%
%
%
%
%
\bibitem{Bennett} {\sc C.Bennett, R.Sharpley}, {\em Interpolation of operators}, volume 129 of Pure and Applied Mathematics.
Academic Press Inc, Boston (1988).

%
%

%
%
%
%



\bibitem{BLU} {\sc A.Bonfiglioli, E.Lanconelli, F.Uguzzoni}, {\em Stratified Lie Groups and Potential Theory for their Sub--Laplacians.} Springer Monographs in Mathematics, Springer-Verlag Berlin Heidelberg, New York (2007).



%
%
%

%
%
%
%

%
%
%
%
%
%
%
%


\bibitem{cheeger} {\sc J.Cheeger}, {\em
Differentiability of Lipschitz functions on metric measure
spaces.} Geometry and Functional Analysis, {\bf
9}, (1999), 428--517.

\bibitem{CK1} {\sc J.Cheeger, B.Kleiner}. {\em On the differentiability of Lipschitz maps from  metric measure spaces to Banach spaces.}  Nankai Tracts Math., vol. 11, pp 129-152. World Sci. Publ., Hackensack, 2006.

\bibitem{CK2} {\sc J.Cheeger, B.Kleiner}. {\em  Differentiability of Lipschitz functions from  metric measure spaces to Banach spaces with Radon-Nikodym property.}  Geom. Funct. Anal. {\bf 19} (2009), 1017--1027.



%
%
%
%
%
%
%
%
%
%
\bibitem{cianchi} {\sc A.Cianchi}, {\em Continuity properties of functions from Orlicz-Sobolev
spaces and embedding theorems}, Annali della Scuola Normale Superiore di Pisa, 1996.

\bibitem{cianchi2}{\sc A.Cianchi}, {\em Optimal Orlicz-Sobolev embeddings}, Revista Matematica
Iberoamericana, 2004.
%
%
%
%
%
%
%
\bibitem{Cso}{\sc M.Cs\"ornyei}, {\em Absolutely continuous functions of Rado, Reichelderfer and Mal\'y.}
J. Math. Anal. Appl., {\bf  252} (2000), 147--166.
%
%


%



%
%
%
%



%
%
%
%


%
%





%
%
%


%
%

\bibitem{donaldson} {\sc T.K. Donaldson and N.S. Trudinger}, {\em Orlicz-Sobolev spaces and
imbedding theorems} 1971.

%




\bibitem{federer} {\sc H.Federer}, {\em Geometric Measure Theory.}
Springer, 1969.


\bibitem{FS} {\sc G.B. Folland, E.M. Stein}, {\em Hardy spaces on homogeneous groups.}  Princeton University Press, 1982.

%
%
%
%
%
\bibitem{FGuVG}{\sc B.Franchi, C.E. Guti\'errez,T.Van Nguyen}, {\em Homogenization and Convergence of Correctors in Carnot groups.} Comm. Partial Differential Equations, {\bf 30} (2005), 1817--1841.
%
%
%
%
%

\bibitem{FLW representation}
{\sc B.Franchi, G.Lu, R.L. Wheeden}, {\em A relationship between
Poincar\'e-type inequalities and representation formulas in spaces
of homogeneous type.} Internat. Math. Res. Notices, \textbf{1} (1996),
1--14.








\bibitem{FSSC2} {\sc B.Franchi, R.Serapioni, F.Serra Cassano}, {\em
Approximation
and imbedding theorems for weighted Sobolev spaces associated with
Lipschitz continuous vector fields.} Boll. Un. Mat. Ital.,(7)
\textbf{11}-B ~(1997), 83--117.


%
%
%
%
\bibitem{FSSCstep2} {\sc B.Franchi, R.Serapioni, F.Serra~Cassano},
{\em On the Structure of Finite Perimeter Sets in Step 2 Carnot Groups.}
The Journal of Geometric Analysis. \textbf{13} (2003), no 3, 421--466.
%
%
%

%
%
%
%
\bibitem{FW singapore}
{\sc B.Franchi, R.L. Wheeden}, {\em Some remarks about
Poincar\'e type inequalities and representation formulas in metric
spaces of homogeneous type.} J. of Inequal. Appl. {\bf 3} (1999),
65--89.




\bibitem{Fr} {\sc  O.K. Friedrichs, }, {\em The identity of weak and
strong extensions of differential operators},
 Trans.\ Amer.\ Math.\ Soc. {\bf 55} (1944), 132--151.
 
%
%
%
%

\bibitem{garnie} {\sc N.Garofalo, D.M. Nhieu}, {\em Isoperimetric and
Sobolev inequalities for Carnot-Carath\'eodory spaces and the existence of
minimal surfaces.} Comm. Pure Appl. Math., \textbf{49}~(1996), 1081--1144.

%
%
%
%


%
\bibitem{Giusti2} {\sc E. Giusti},  {\em Direct methods in the calculus of variations.} World Scientific Publishing Co., Inc., River Edge, NJ, 2003.

\bibitem{gromov1} {\sc M.Gromov}, {\em Carnot-Carath\'eodory spaces seen
from
within.} In {\em Subriemannian Geometry}, Progress in Mathematics,
{\bf 144}. ed. by A.Bellaiche and J.Risler, Birkhauser Verlag,
Basel, 1996.

\bibitem{gromov2} {\sc M.Gromov}, {\em Metric structures for
Riemannian and Non Riemannian Spaces.} Progress in Mathematics,
{\bf 152}, Birkhauser Verlag,
Boston, (1999).










\bibitem{HKST}{\sc J.Heinonen, P.Koskela, N.Shanmugalingam, and J.T. Tyson}, {\em Sobolev spaces on metric measure spaces.} New Mathematical
Monographs, vol. 27, Cambridge University Press, Cambridge, 2015.


%
\bibitem{jerison}{\sc D.Jerison}, {\em The Poincar\'e inequality for
vector fields satisfying H\"ormander condition.} Duke Math. J.,
\textbf{53}~(1986), 503-523.

%
\bibitem{Hencl1}{\sc S.Hencl}, {\em On the notions of absolute continuity for functions of several variables.}
Fund. Math., {\bf 173}  (2002), 175--189.
\bibitem{Hencl2}{\sc S.Hencl}, {\em Notes on absolutely continuous functions of several variables.} Real Anal. Exch. {\bf 30 (1)} (2004/2005), 59--74.
%
\bibitem{KKM} {\sc  J.Kauhanen,  P.Koskela and  J.Mal\'y}, {\em On functions with derivatives in a
Lorentz space.} Manuscripta Math. {\bf 100} (1999), 87--101.



%
%

\bibitem{KMX} {\sc B.Kleiner, S.M\"uller and X. Xie}, {\em  Pansu pullback and exterior differentiation for  Sobolev maps on Carnot groups.} ArXiv:2007.092741v2 [math. DG] 3 Dec 2021.

\bibitem{KMX2} {\sc B.Kleiner, S.M\"uller and X. Xie}, {\em  Pansu pullback  and rigidity of mappings between Carnot groups.} ArXiv:2004.06694v2 [math. DG] 18 Dec 2021.

\bibitem{KM} {\sc T. Kuusi, G. Mingione}, {\em Guide to nonlinear potential estimates.}
Bull. Math. Sci. 4(1), 1--82 (2014). 

\bibitem{LZ} {\sc P.Lahti and X.Zhou},{\em Absolutely continuous mappings on doubling metric measure spaces.} manuscripta math. 173, 1--21 (2024).


%
%
%
\bibitem{kranosel} {\sc M.A. Krasnosel’skii,  Ya.B. Rutickii}, {\em Convex Functions and
Orlicz spaces}, 1961.
%
%




%
%
%
%
%

%




%
%
%
%
%
%
%
%


\bibitem{luxemburg}{\sc W.A.J. Luxemburg}, 
{\em Banach function spaces},
Getal en Figuur. 6. Assen: Van Gorcum \& Comp. 1955.

%
\bibitem{magnani thesis} {\sc V.Magnani}, 
{\em Elements of Geometric Measure Theory on sub-Riemannian Groups},
PhD Thesis, Scuola Normale Superiore,Edizioni della Normale, Pisa, 2002.



%
%






\bibitem{Ma2001}{\sc V.Magnani}, {\em Differentiability and Area formula on Stratified Lie groups} Houston Jour. Math. 27, n.2, 297-323, (2001).


\bibitem{Ma3}{\sc  V.Magnani}, {\em Towards differential calculus in stratified groups.} J. Aust. Math. Soc. {\bf 95} (2013),No.1 , 76--128.
   
\bibitem{MPS}{\sc V.Magnani, A.Pinamonti, G.Speight}, {\em Porosity and Differentiability of Lipschitz Maps from Stratified Groups to Banach Homogeneous Groups.} Annali di Matematica Pura ed Applicata 199, (2020), 1197–1220.

\bibitem{MR}{\sc V.Magnani, T.Rajala}, {\em Radon-Nikodym property and area formula for Banach homogeneous group targets.} Int. Math. Res. Not. 2014, No. 23, 6399-6430 (2014).

%
%



%
%
%
\bibitem{Maly}{\sc J.Mal\'y}, {\em Absolutely Continuous Functions of Several Variables.} J. Math. Anal. Appl. {\bf 231} (1999), 492--508.

\bibitem{Manfredi}{\sc J.J. Manfredi, G.Mingione}, {\em Regularity results for quasilinear elliptic equations
in the Heisenberg group.} Math. Ann. (2007), 339:485–544.

%
\bibitem{MaZ}{\sc N.Marola, W.P. Ziemer}, {\em Aspects of area formula by way of Luzin,  Rad\'o and Reichelderfer on metric measure spaces.}
J. Math. Soc. Japan {\bf 67} (2015), no.  2, 561--579.


%
%
%




%
%
%
%


%
%
%
%
%



\bibitem{monti} {\sc R.Monti}, 
{\em Distances, Boundaries and Surface
Measures in Carnot-Carath\'eodory Spaces.} PhD Thesis, University of Trento, 2001.

%
%
%



%

%
\bibitem{MSC} {\sc R.Monti, F.Serra Cassano}, {\em Surface
measures in Carnot-Carath\'eodory spaces.} Calc. Var. Partial
Diff. Eq. {\bf 13}, (2001), 339--376.

%
%





\bibitem{pansu} {\sc P.Pansu}, {\em M\'etriques de Carnot-Carath\'eodory et
quasiisom\'etries des espaces sym\'etriques de rang un.} Ann. of
Math., \textbf{129}, (1989), 1--60.

%


%
%
%
%
%
%
\bibitem{PS}{\sc A.Pinamonti, G.Speight}, {\em Porosity, Differentiability and Pansu's Theorem.}  J. Geom. Anal., 27(3),(2017), 2055–-2080
\bibitem{RR}{\sc T.Rado, P.V. Reichelderfer}, {\em Continuous Transformations in Analysis.} Springer-
Verlag, New York, 1955.
\bibitem{RaMo}{\sc A.Ranjbar-Motlagh}, {\em An embedding theorem for Sobolev type functions with gradients in a Lorentz
space.} Studia Math. {\bf 19} (2009), no.1 ,1--9. 
\bibitem{Romanov}{\sc  A.S. Romanov}, {\em Absolute continuity of the Sobolev type functions on metric measure spaces.} Sibirsk. Mat.
Zh. {\bf 49} (2008), no. 5, 1147--1156 .

\bibitem{Resh}{\sc Yu.G. Reshetnyak}, {\em Sobolev classes of functions with values in a metric space.}
Sibirsk. Mat. Zh. {\bf 38} (1997), no. 3, 657--675.
%


%
%
%

%
%


%
%
%





%


%
%
%
%
%

\bibitem{Shan}{\sc N. Shanmugalingam}, {\em Newtonian spaces: an extension of Sobolev spaces to metric measure spaces.} Rev.Mat. Iberoamericana {\bf 16} (2000), 243--279. 

\bibitem{Stein} {\sc E.M. Stein, G.Weiss}, {\em Introduction to Fourier Analysis on Euclidean Spaces.} Princeton University Press, Princeton, N.J., 1971.
\bibitem{SteinAnnMath} {\sc  E.M. Stein}, {\em  Editor's note: the differentiability of functions in $\R^n$.} Ann. of
Math. {\bf 113} (1981), 383--385.

\bibitem{Stepanov} {\sc W.Stepanoff },  {\em Ueber totale Differenzierbarkeit.}Math. Ann., {\bf 90}  (1923), 318--320.

%
\bibitem{talenti2} {\sc G.Talenti}, {\em An embedding theorem}, Partial differential equations and
the calculus of variations. Essays in Honor of Ennio De Giorgi, 1989.


%




%
%

\bibitem{Vo99} {\sc S.K. Vodopyanov}, {\em Mappings with bounded distortion and with finite distortion on
Carnot groups.} Sibirsk.Mat. Zh. 40 (1999), no. 4, 764--804.
\bibitem{Vo00} {\sc S.K. Vodop'yanov}, {\em ${\mathcal
P}$-differentiability
on Carnot groups in different topologies and related topics.}
Proc.on Analysis and Geometry,603--670, Sobolev Institute Press,
Novosibirsk, (2000)
\bibitem{Vo03} {\sc S.K. Vodop'yanov}, {\em Differentiability of maps of Carnot groups of  Sobolev classes.} Sb. Math. {\bf 194} (2003), 857--877.


\bibitem{WZ}{\sc K.Wildrick, T.Z\"urcher}, {\em Space filling with metric measure spaces.} Math. Z. {\bf 270} (2012), 103--131.
\bibitem{WZ2}{\sc K.Wildrick, T.Z\"urcher}, {\em Sharp Differentiability Results for the Lower Local Lipschitz Constant and Applications to Non-embedding. } J. Geom. Anal. {\bf 25} (2015), 2590--2616.
\bibitem{WZ3}{\sc K.Wildrick, T.Z\"urcher}, {\em Mapping with an upper gradient in a Lorentz space.} Preprint



\end{thebibliography}
\end{document}